\pgfplotsset{compat=1.14,width=7.5cm}
\DeclareMathOperator*{\esssup}{ess\,sup}
\newcommand{\longweak}{\relbar\joinrel\rightharpoonup}
\let\div\relax\DeclareMathOperator{\div}{div}
\newtheorem{remark}{Remark}
\newtheorem{lemma}{Lemma}
\newtheorem{theorem}{Theorem}
\newtheorem{definition}{Definition}
\newtheorem{corollary}{Corollary}
\pgfplotsset{colormap={special}{[1pt]
		rgb(0pt)    =(0.231373,0.298039, 0.752941),
		rgb(160pt)=(0.27451,0.337255,0.760784),
		rgb(307pt)=(0.364706,0.419608,0.776471),
		rgb(432pt)=(0.494118,0.533333,0.8),
		rgb(500pt)=(0.865003,0.865003,0.865003),
		rgb(594pt)=(0.772549,0.376471,0.45098),
		rgb(722pt)=(0.741176,0.211765,0.313725),
		rgb(850pt)=(0.721569,0.101961,0.223529),
		rgb(1000pt)=(0.705882,0.0156863, 0.14902),
	},
}
\renewcommand{\email}[2][]{%
	\ifx\emails\@empty\relax\else{\g@addto@macro\emails{,\space}}\fi%
	\@ifnotempty{#1}{\g@addto@macro\emails{\textrm{(#1)}\space}}%
	\g@addto@macro\emails{#2}%
}
\begin{document}
	
	\title[Unsteady Darcy--Forchheimer--Brinkman in tumor growth models]{ON THE UNSTEADY DARCY--FORCHHEIMER--BRINKMAN EQUATION \\ IN LOCAL AND NONLOCAL TUMOR GROWTH MODELS}
	\author{Marvin Fritz${}^{1,*}$}
	\address{${}^1$Department of Mathematics, Technical University of Munich, Germany}
	\email{\{fritzm,wohlmuth\}@ma.tum.de}
	\author{Ernesto A. B. F. Lima${}^2$}
	\address{${}^2$Oden Institute for Computational Engineering and Sciences, The University of Texas at Austin, USA}
	\email{\{lima,oden\}@oden.utexas.edu}
	\author{J. Tinsley Oden${}^2$}
	\author{Barbara Wohlmuth${}^1$}
	\thanks{${}^*$Corresponding author}
	\subjclass{35K35, 76D07, 35A01, 35D30, 35Q92, 65C60, 65M60}
	\keywords{tumor growth, nonlocal, existence, sensitivity analysis, finite elements}
	\maketitle

\begin{abstract}
		A mathematical analysis of local and nonlocal phase-field models of tumor growth is presented that includes time-dependent Darcy--Forchheimer--Brinkman models of convective velocity fields and models of long-range cell interactions. A complete existence analysis is provided. In addition, a parameter-sensitivity analysis is described that quantifies the sensitivity of key quantities of interest to changes in parameter values. Two sensitivity analyses are examined; one employing statistical variances of model outputs and another employing the notion of active subspaces based on existing observational data. Remarkably, the two approaches yield very similar conclusions on sensitivity for certain quantities of interest. The work concludes with the presentation of numerical approximations of solutions of the governing equations and results of numerical experiments on tumor growth produced using finite element discretizations of the full tumor model for representative cases.
\end{abstract}



	\section{Introduction}
	
	In this exposition, we present a mathematical analysis of a general class of local and nonlocal multispecies phase-field models of tumor growth, derived using the balance laws of continuum mixture theory and employing mesoscale versions of the Ginzburg--Landau energy functional involving cell-species volume fractions. The model class involves nonlinear characterizations of cell velocity obeying a time-dependent Darcy--Forchheimer--Brinkman law. In addition, to account for long-range interactions characterizing such effects as cell-to-cell adhesion, a class of nonlocal models of tumor growth is considered, which involves systems of integro-differential equations.
	
	This contribution generalizes the analyses of Cahn--Hilliard--Darcy models studied in Refs.~\citen{garcke2016global,garcke2018cahn} and includes a detailed analysis of existence of weak solutions of the governing system of fourth-order integro-partial-differential equations. Beyond the mathematical analysis of this class of models, we also explore the sensitivity of model outputs to perturbations in parameters for the local models. These analyses draw from active subspace methods when observational data are available and on variance sensitivity analysis when models outputs due to parameter variations are considered. Finite element approximations of the general local tumor models are presented and the results of numerical experiments are given that depict the role of linear and nonlinear flow laws on the evolution and structure of solid tumors.
	
	This study is intended to contribute to a growing body of mathematical and computational work accumulated over the last two decades on tumor growth, decline, and invasion in living organisms. To date, the bulk of the models proposed are phenomenological models, designed to depict phenomena at the macro- or meso-scale where microenvironmental constituents are represented by fields describing volume fractions or mass concentrations of various species. The models studied here are in this category. Reviews and surveys of recent literature of tumor growth modeling can be found in Refs.~\citen{bellomo2008foundations,oden2016toward,deisboeck2010multiscale}, and in surveys of work of the last decade in Refs.~\citen{hatzikirou2005mathematical,quaranta2005mathematical,byrne2006modelling,graziano2007mechanics,roose2007mathematical,wise2008three}.
	
	Prominent among more recent proposed models are those involving diffuse-interface or phase-field representations designed to capture morphological instabilities in the form of phase changes driven by cell necrosis and non-uniform cell proliferation. These effects result in tumor growth made possible by increases in the surface areas at the interface of cell species\cite{wise2008three}. Models that can replicate such phenomena usually involve Ginzburg--Landau free energy functionals\cite{gurtin1996generalized} of species concentrations or volume fractions, nutrient concentrations, and, importantly, gradients of species concentrations as a representation of surface energies, a feature that leads to Cahn--Hilliard type models. The use of such phase-field formulations eliminates the need for enforcing conditions across interfaces between species and for tracking the interface, the locations of which are intrinsic features of the solution. Such non-sharp interfaces are often better characterizations of the actual moving interfaces between multiple species within a tumor than models employing sharp interfaces.
	
	Multiphase models of tumor growth have been proposed by several authors over the last decade. We mention as examples the multicomponent models of Araujo and McElwain\cite{araujo2004history} and the four, six, and ten species models of Refs.~\citen{hawkins2012numerical,lima2015analysis,lima2014hybrid}, respectively, the Cahn--Hilliard--Darcy models of Garcke \textit{et al}\cite{garcke2016global,garcke2018cahn} and the three-dimensional nonlinear multispecies models of Wise \textit{et al}\cite{wise2008three}. Additional references can be found in these works.
	
	Following this introduction, we introduce a general class of multispecies, phase-field models developed from balance laws and accepted cell-biological phenomena observed in cancer, in which cell velocity, present in mass convection, is modeled via at time-dependent, nonlinear flow field governed by a Darcy--Forchheimer--Brinkman law\cite{burcharth1995one,gu1991gravity,hall1995comparison,laushey1968darcy,zhu2014study}, which can be obtained by the means of mixture theory.\cite{rajagopal2007hierarchy,srinivasan2014thermodynamic}. In Sections \ref{Section_Notation}--\ref{Section_Nonlocal}, we develop a complete mathematical analysis of these classes of models proving existence of weak solutions through compactness arguments. We first consider the so-called local theory with nonlinear flow in which the evolution of the tumor volume fractions are influenced only by events in the neighborhood of each spatial point in the tumor mass and then we address nonlocal effects to depict long-range interactions of cell species. 
	
	To address the fundamental question of sensitivity of solutions of such nonlinear systems to variation in model parameters, we provide a detailed analysis of sensitivity for local models in Section \ref{Section_Sens}, calling on both statistical methods\cite{oden_2018,saltelli2000sensitivity,saltelli2008global,saltelli2010variance} of sensitivity analysis and data-dependent methods based on the notion of active subspaces\cite{constantine2014active,constantine2015active}. In Section \ref{Section_Numerical} we take up representative finite element approximations and numerical algorithms and present the results of numerical experiments, particularly focusing  on the effects of time-dependent nonlinear flow regimes on the evolution of tumor morphology. In a final section, we provide concluding remarks of the study. 
	
	\section{A Class of Local Models of Tumor Growth} \label{Section_Tumor}
	
	We consider a solid tumor mass $\mathcal{T}$ evolving in the interior of a bounded Lipschitz domain $\Omega\subset\mathbb{R}^d$, $d \leq 3$, over a time period $[0,T]$. At each point $x\in\Omega$, several cell species and other constituents exist which are differentiated according to their volume fractions, $\phi_\alpha$, $\alpha=1,2,\ldots,N$. The volume fractions of tumor cells is given by the scalar-valued field $\phi_T=\phi_T(x,t)$ and the volume-averaged velocity is denoted by $v$. The mass density of all $N$ species is assumed to be a single constant field, and the evolution of the tumor cells is governed by the evolution of proliferative cells with volume fraction $\phi_P$, hypoxic cells $\phi_H$, and necrotic cells $\phi_N$. The nutrient supply to the tumor is characterized by a constituent with volume fraction $\phi_\sigma=\phi_\sigma(x,t)$. 
	
	The tumor mass is conserved during its evolution, and this is assumed to be captured by the convective phase-field equation\cite{lima2014hybrid,lima2015analysis}
	\begin{eqnarray}
	\partial_t \phi_T+\text{div}(\phi_T v)=\text{div}(m_T(\phi_T,\phi_\sigma)\nabla\mu)+\lambda_T\phi_\sigma \phi_T(1-\phi_T)-\lambda_A\phi_T, 
	\label{Eq_DerivationT}
	\end{eqnarray}
	where $m_T$ is the mobility function, $\mu$ the chemical potential, and $\lambda_T$, $\lambda_A$ are the proliferation and apoptosis rates, respectively. Following Ref.~\citen{hawkins2012numerical,lima2015analysis,lima2014hybrid,wise2008three}, we consider
	\begin{eqnarray}
	\mu=\frac{\delta \mathcal{E}}{\delta \phi_T}=\Psi'(\phi_T)-\varepsilon_T^2\Delta\phi_T-\chi_0\phi_\sigma,
	\label{Eq_DerivationMu}
	\end{eqnarray}
	where $\delta \mathcal{E}/\delta \phi_T$ denotes the first variation of the Ginzburg--Landau free energy functional,
	\begin{eqnarray}
	\mathcal{E}(\phi_T,\phi_\sigma)= \int_\Omega \Psi(\phi_T)+\frac{\varepsilon_T^2}{2} |\nabla \phi_T|^2 - \chi_0 \phi_\sigma \phi_T \, \text{d}x.
	\label{Eq_DerivationE}
	\end{eqnarray}
	In (\ref{Eq_DerivationE}), $\Psi$ is a double-well potential with a prefactor $\overline{E}$ (such as $\Psi(\phi_T)=\overline{E}\phi_T^2(1-\phi_T)^2$), $\varepsilon_T$ is a parameter associated with the interface thickness separating cell species, and $\chi_0$ is the chemotaxis parameter. 
	The velocity $v$ is assumed to obey the time-dependent incompressible Darcy--Forchheimer--Brinkman law\cite{burcharth1995one,gu1991gravity,hall1995comparison,laushey1968darcy,zhu2014study}
	\begin{equation}\begin{aligned}
	\partial_t v+\alpha v &=\div(\nu(\phi_T,\phi_\sigma) \textup{D} v)-F_1|v|v-F_2|v|^2 v-\nabla p+(\mu+\chi_0\phi_\sigma) \nabla \phi_T,\\
	\div v &=0,
	\end{aligned}
	\label{Eq_DerivationV}
	\end{equation}
	where $\textup{D} v = \frac{1}{2} (\nabla v + (\nabla v)^\top)$ denotes the deformation-rate tensor.
	The nutrient concentration $\phi_\sigma$ is assumed to obey a convection-reaction-diffusion equation of the form,
	\begin{eqnarray}
	\partial_t \phi_\sigma+\div(\phi_\sigma v)=\div\!\big(m_\sigma(\phi_T,\phi_\sigma)(\delta^{-1}_\sigma\nabla\phi_\sigma-\chi_0\nabla\phi_T)\big)-\lambda_\sigma\phi_T\phi_\sigma,
	\label{Eq_DerivationS}
	\end{eqnarray}
	with $m_\sigma$ a mobility function and $\delta_\sigma$ and $\lambda_\sigma$ positive parameters. 
	
	Collecting \eqref{Eq_DerivationT},\eqref{Eq_DerivationMu},\eqref{Eq_DerivationV} and \eqref{Eq_DerivationS}, we arrive at a model governed by the system,
	\begin{equation} \begin{aligned}
	\partial_t \phi_T+ \div(\phi_T v) &=  \div (m_T(\phi_T,\phi_\sigma) \nabla \mu) + S_T(\phi_T,\phi_\sigma), \\
	\mu &= \Psi'(\phi_T)-\varepsilon_T^2 \Delta \phi_T -\chi_0 \phi_\sigma, \\
	\partial_t \phi_\sigma + \div(\phi_\sigma v)&=  \div\!\big(m_\sigma(\phi_T,\phi_\sigma) (\delta_\sigma^{-1} \nabla \phi_\sigma - \chi_0   \nabla \phi_T)\big) + S_\sigma(\phi_T,\phi_\sigma), \\
	\partial_t v + \alpha v &= \div (\nu(\phi_T,\phi_\sigma) \textup{D} v)- F_1 |v| v - F_2 |v|^2 v - \nabla p  +  S_v(\phi_T,\phi_\sigma),\\
	\div v &= 0,
	\end{aligned} 
	\label{Eq_Model} \end{equation}
	in the time-space domain $(0,T)\times \Omega$ with source functions $S_T, S_\sigma, S_v$ with properties laid down in Theorem \ref{Thm_Existence} of Section \ref{Section_Analysis}. We supplement the system with the following boundary and initial conditions,
	\begin{equation} \begin{aligned}\partial_n \phi_T = \partial_n \mu &= 0 &&\text{on } (0,T) \times \partial\Omega, \\
	\phi_\sigma &=1 &&\text{on }  (0,T) \times \partial\Omega, \\
	v &= 0 &&\text{on } (0,T) \times \partial\Omega, \\
	\phi_T(0)&=\phi_{T,0} &&\text{in } \Omega, \\
	\phi_\sigma(0)&=\phi_{\sigma,0} &&\text{in } \Omega, \\
	v(0) &= v_0 &&\text{in } \Omega,
	\end{aligned}
	\label{Eq_InitialBoundary} \end{equation}
	where $\phi_{T,0}, \phi_{\sigma,0}, v_0$ are given functions. Here, $\partial_n f = \nabla f \cdot n$ denotes the normal derivative of a function $f$ at the boundary $\partial \Omega$ with the outer unit normal $n$. 
	
	\section{Notation and Auxiliary Results} \label{Section_Notation}
	Notationally, we suppress the domain $\Omega$ when denoting various Banach spaces and write simply $L^p, H^m, W^{m,p}$. We equip these spaces with the norms $|\cdot|_{L^p}$, $|\cdot|_{H^m}$, $|\cdot|_{W^{m,p}}$, and, to simplify notation, we denote by $(\cdot,\cdot)$ the scalar product in $L^2$. The brackets $\langle \cdot,\cdot \rangle_{H^1}$ denote the duality pairing on $(H^1)'\times H^1$ and in the same way for the other spaces. In the case of $d$-dimensional vector functions, we write $[L^p]^d$ and in the same way for the other Banach spaces, but we do not make this distinction in the notation of norms, scalar products and applications with its dual. 
	
	Throughout this paper, $C<\infty$ stands for a generic constant. We recall the Poincar\'e and Korn inequalities\cite{roubicek},
	\begin{alignat*}{2}
	|f-\overline{f}|_{L^2} &\leq C |\nabla f|_{L^2} &&\quad\text{for all } f \in H^1, \\
	|f|_{L^2} &\leq C |\nabla f|_{L^2} &&\quad\text{for all } f \in H^1_0, \\
	|\nabla f |_{L^2} &\leq C |\textup{D} f|_{L^2} &&\quad \text{for all } f\in H_0^1,
	\end{alignat*}
	where $\overline{f}=\frac{1}{|\Omega|}\int_\Omega f(x) \,\textup{d} x$ is the mean of $f$. We define the spaces $H$, $V$, $V^\perp$, $L_0^2$ as follows:
	\begin{alignat*}{3}
	&H &&= \{ u \in [L^2]^d : \div  u = 0,  ~u \cdot n |_{\partial \Omega} =0\}, \\
	&V &&= \{ u \in [H^1_0]^d : \div  u = 0\}, \\
	&V^\perp &&=\{f \in [H^{-1}]^d : \langle f,u\rangle_{H^{-1} \times H_0^1} = 0 \text{ for all } u \in V\}, \\
	&L_0^2 &&= \{ u \in L^2 : \overline{u} = 0\},
	\end{alignat*}
	where, for $u \in H$, the divergence is meant in a distributional sense and its trace operator is well-defined; see Ref.~\citen{girault1979finite}.
	For a given Banach space $X$, we define the Bochner space\cite{evans2010partial,roubicek}
	$$L^p(0,T;X)=\{ u :(0,T) \to X: u \text{ Bochner measurable, } \int_0^T |u(t)|_X^p \,\text{d$t$} < \infty \},$$
	where $1 \leq p < \infty$, with the norm $\|u\|_{L^p X}^p = \int_0^T |u(t)|_X^p \,\textup{d} t$. For $p=\infty$, we equip $L^\infty(0,T;X)$ with the norm $\|u\|_{L^\infty X} = \esssup_{t\in (0,T)} |u(t)|_X$. We introduce the Sobolev--Bochner space,
	$$W^{1,p}(0,T;X)=\{ u \in L^p(0,T;X) : \partial_t u \in L^p(0,T;X) \},$$
	and the inverse Sobolev--Bochner space
	$$W^{-1,p}(0,T;X)=\mathscr{L}(W_0^{1,q}(0,T);X),$$
	where $q=1/(1-1/p)$ is the H\"older conjugate of $p$.
	
	Let $X$, $Y$, $Z$ be Banach spaces such that $X$ is compactly embedded in $Y$ and $Y$ is continuously embedded in $Z$, i.e. $X\hookrightarrow \hookrightarrow Y \hookrightarrow Z$. In the proof of the existence theorem below we make use of the Aubin--Lions compactness lemma, see Corollary 4 in Ref.~\citen{simon1986compact}, 
	\begin{equation}\begin{alignedat}{2} L^p(0,T;X) \cap W^{1,1}(0,T;Z) &\hookrightarrow \hookrightarrow L^p(0,T;Y), &&\quad 1\leq p<\infty, \\
	L^\infty(0,T;X) \cap W^{1,r}(0,T;Z) &\hookrightarrow \hookrightarrow C([0,T];Y), &&\quad r >1,
	\label{Eq_AubinLions}
	\end{alignedat}
	\end{equation}
	and of the following continuous embeddings, see Theorem 3.1, Chapter 1 in Ref.~\citen{lions2012non} and Theorem 2.1 in Ref.~\citen{strauss1966continuity},
	\begin{alignat}{2} L^2(0,T;Y) &\cap H^1(0,T;Z) &&\hookrightarrow C([0,T];[Y,Z]_{1/2}), \label{Eq_ContEmbedding1}\\
	L^\infty(0,T;Y) &\cap C_w([0,T];Z) &&\hookrightarrow C_w([0,T];Y),\label{Eq_ContEmbedding2}
	\end{alignat}
	where $[Y,Z]_{1/2}$ denotes the interpolation space between $Y$ and $Z$; see Definition 2.1, Chapter 1 in Ref.~\citen{lions2012non} for a precise definition. Here, $C_w([0,T];Y)$ denotes the space of the weakly continuous functions on the interval $[0,T]$ with values in $Y$. 
	\begin{lemma}[Gronwall, cf. Lemma 3.1 in Ref.~\citen{garcke2017well}] Let $u,v \in C([0,T];\mathbb{R}_{\geq 0})$. If there are constants $C_1,C_2<\infty$ such that
		$$u(t)+v(t) \leq C_1 + C_2 \int_0^t u(s) \, \textup{d}s \quad \text{ for all } t\in [0,T],$$
		then it holds that $u(t) + v(t) \leq C_1 e^{C_2 T}$ for all $t \in [0,T]$.
		\label{Lem_Gronwall}
	\end{lemma}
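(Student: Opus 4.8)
The plan is to discard the nonnegative term $v$ from the left-hand side, apply the classical differential form of Gronwall's inequality to $u$ alone, and then reinsert the resulting exponential bound into the hypothesis. I would first note that one may assume $C_2 \geq 0$ without loss of generality: if $C_2 < 0$, then since $u \geq 0$ we have $C_2\int_0^t u(s)\,\textup{d}s \leq 0$, so the hypothesis continues to hold with $C_2$ replaced by $0$, and the conclusion in that case is the stronger statement $u(t)+v(t) \leq C_1$.

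Next I would use $v(t) \geq 0$ to reduce the hypothesis to $u(t) \leq C_1 + C_2\int_0^t u(s)\,\textup{d}s$ for all $t \in [0,T]$, and set $w(t) := C_1 + C_2\int_0^t u(s)\,\textup{d}s$. Since $u \in C([0,T];\mathbb{R}_{\geq 0})$, the function $w$ lies in $C^1([0,T])$ with $w(0)=C_1$ and $w'(t) = C_2\,u(t) \leq C_2\,w(t)$. Multiplying by the integrating factor $e^{-C_2 t}$ gives $\tfrac{\textup{d}}{\textup{d}t}\big(e^{-C_2 t}w(t)\big) \leq 0$, whence $e^{-C_2 t}w(t) \leq w(0) = C_1$, i.e. $u(t) \leq w(t) \leq C_1 e^{C_2 t}$.

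Finally I would substitute this pointwise bound back into the reduced hypothesis: for all $t \in [0,T]$,
$$u(t)+v(t) \;\leq\; C_1 + C_2\int_0^t u(s)\,\textup{d}s \;\leq\; C_1 + C_1 C_2\int_0^t e^{C_2 s}\,\textup{d}s \;=\; C_1 + C_1\big(e^{C_2 t}-1\big) \;=\; C_1 e^{C_2 t} \;\leq\; C_1 e^{C_2 T},$$
the last step using monotonicity of $t\mapsto e^{C_2 t}$ for $C_2 \geq 0$ together with $t \leq T$. This yields the claimed estimate.

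I do not expect any real obstacle: the only points needing a moment's care are the $C^1$-regularity of $w$ — which is precisely what continuity of $u$ supplies, making the integrating-factor argument legitimate — and the sign of $C_2$, dealt with by the reduction above. As an alternative, one could simply cite the standard integral Gronwall inequality (e.g.\ from Ref.~\citen{roubicek} or Ref.~\citen{evans2010partial}) applied to $u$ and then carry out only the final substitution step.
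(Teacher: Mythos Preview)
Your proof is correct. The paper does not actually prove this lemma; it merely states it with a citation to Lemma~3.1 in Ref.~\citen{garcke2017well} and moves on, so there is no argument in the paper to compare against. Your approach---drop the nonnegative $v$, apply the differential form of Gronwall to $u$ via the auxiliary $w(t)=C_1+C_2\int_0^t u$, then reinsert the bound into the original hypothesis---is the standard elementary proof and is fully rigorous for $C_2\geq 0$.

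One small remark on your preliminary reduction: the treatment of $C_2<0$ does not quite work as written. If $C_2<0$ then $C_1 e^{C_2 T}<C_1$, so establishing $u(t)+v(t)\leq C_1$ is \emph{weaker} than the stated conclusion, not stronger. In fact the lemma as literally stated is false for $C_2<0$ (take $u\equiv 0$, $v\equiv C_1>0$: the hypothesis holds trivially, but $C_1\not\leq C_1 e^{C_2 T}$). This is moot, however: the paper's convention is that generic constants are nonnegative, and every application of Lemma~\ref{Lem_Gronwall} in the paper has $C_2\geq 0$. The case $C_2<0$ is simply not intended, and your digression can be dropped without loss.
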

	\begin{lemma}[De Rham, cf. Lemma 7 in Ref.~\citen{tartar1976nonlinear}] If $f \in V^\perp$,  then there is a unique $q \in L^2_0$ such that $f=\nabla q$ and $|q|_{L^2} \leq C |f|_{H^{-1}}$. In other words, there exists an operator $L \in \mathscr{L}(V^\perp,L_0^2)$ such that $\nabla \circ L = \textup{Id}$.
		\label{Lem_Rham}
	\end{lemma}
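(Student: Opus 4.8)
\emph{Proof plan.} The plan is to reduce the statement to the classical fact that, on a bounded Lipschitz domain, the divergence operator $\div : [H^1_0]^d \to L^2_0$ is surjective and admits a bounded linear right inverse (a Bogovski\u{\i}-type operator); see e.g.\ Ref.~\citen{girault1979finite}. Granting this, the lemma follows from standard Hilbert-space duality via the closed-range theorem.

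First I would regard $B := \div$ as a bounded linear operator $[H^1_0]^d \to L^2_0$ and observe that $V = \ker B$ by definition of $V$. A short integration by parts --- legitimate since test functions $u \in [H^1_0]^d$ vanish on $\partial\Omega$ --- shows that the Hilbert-space adjoint of $B$ is $B^* = -\nabla : L^2_0 \to [H^{-1}]^d$, because $\langle B^* q, u\rangle_{H^{-1}\times H^1_0} = (q, \div u) = -\langle \nabla q, u\rangle_{H^{-1}\times H^1_0}$ for all $q \in L^2_0$ and $u \in [H^1_0]^d$.

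Next, because $B$ is surjective, the closed-range/open-mapping theorem guarantees that $B^* = -\nabla$ is injective, has closed range, is bounded below --- that is, $|q|_{L^2} \leq C|\nabla q|_{H^{-1}}$ for all $q \in L^2_0$ --- and satisfies $\nabla(L^2_0) = R(B^*) = (\ker B)^\perp = V^\perp$. Hence, given $f \in V^\perp$, there exists $q \in L^2_0$ with $\nabla q = f$ and $|q|_{L^2} \leq C|f|_{H^{-1}}$. Uniqueness is immediate: if $\nabla q_1 = \nabla q_2$ with $q_i \in L^2_0$, then $q_1 - q_2$ has vanishing distributional gradient, hence is constant on $\Omega$, and the zero-mean condition forces $q_1 = q_2$. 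Setting $Lf := q$ then defines a linear map $V^\perp \to L^2_0$ with $\nabla\circ L = \textup{Id}$, and the estimate shows $L \in \mathscr{L}(V^\perp, L^2_0)$.

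The only genuinely nontrivial ingredient is the surjectivity of $\div : [H^1_0]^d \to L^2_0$ with a bounded right inverse on a bounded Lipschitz domain --- equivalently, the Ne\v{c}as inequality --- which is the standard technical result underlying the analysis of the stationary Stokes problem. I would simply invoke it; everything else is soft functional analysis, so I expect no real obstacle beyond quoting that result correctly and keeping track of the identification of duals.
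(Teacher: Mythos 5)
Your argument is correct, but it is worth noting that the paper does not actually prove this lemma at all: it is quoted as a known result (Lemma~7 in Ref.~\citen{tartar1976nonlinear}), and only the subsequent Corollary~\ref{Cor_Rham} is proved. What you supply is the standard functional-analytic proof, and it is sound: with $B=\div:[H_0^1]^d\to L_0^2$ one has $V=\ker B$, the (Banach-space) adjoint under the identification $(L_0^2)'\cong L_0^2$ is $B^*=-\nabla:L_0^2\to[H^{-1}]^d$, and surjectivity of $B$ together with the closed-range theorem gives that $B^*$ is injective, bounded below, and has range equal to the annihilator of $\ker B$, which is precisely the space $V^\perp$ as defined in Section~\ref{Section_Notation}; uniqueness, linearity and boundedness of $L$ then follow as you say (your ``constant on $\Omega$'' step uses connectedness of the domain, but injectivity of $B^*$ already covers it). You also correctly isolate where the real content sits: the surjectivity of $\div:[H_0^1]^d\to L_0^2$ with a bounded right inverse on a bounded Lipschitz domain (equivalently the Ne\v{c}as inequality, or a Bogovski\u{\i}-type operator), which is exactly the nontrivial ingredient behind Tartar's lemma and is available in Ref.~\citen{girault1979finite}. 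So your route is essentially the same circle of ideas that the cited reference relies on, but made explicit; what it buys is a self-contained derivation of both the existence of $q$ and the estimate $|q|_{L^2}\leq C|f|_{H^{-1}}$ from a single quoted fact, whereas the paper simply outsources the whole statement to the literature.
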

	Introducing the Nemyzki operator of $L$,
	\begin{gather*}
	\mathcal{N}_L : W^{-1,\infty}(0,T;V^\perp) \to W^{-1,\infty}(0,T;L_0^2),\\
	(\mathcal{N}_L w)(\eta) = L\big(w(\eta)\big) \text{ for all } \eta \in W_0^{1,1}(0,T),
	\end{gather*}
	we get the following corollary of the de Rham lemma.
	\begin{corollary}
		If $w \in W^{-1,\infty}(0,T;V^\perp)$, then there exists a unique element $p \in W^{-1,\infty}(0,T;L_0^2)$ such that $w=\nabla p$ in $W^{-1,\infty}(0,T;H^{-1})$.
		\label{Cor_Rham}
	\end{corollary}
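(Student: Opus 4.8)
The plan is to produce the pressure directly as $p := \mathcal{N}_L w$, where $L \in \mathscr{L}(V^\perp, L_0^2)$ is the de Rham operator of Lemma~\ref{Lem_Rham}, and then to verify that this choice has all the asserted properties. First I would check that $p$ is well-defined and lies in the claimed space. By the very definition of the inverse Sobolev--Bochner space, $W^{-1,\infty}(0,T;V^\perp) = \mathscr{L}(W_0^{1,1}(0,T);V^\perp)$, so $w$ acts on test functions $\eta \in W_0^{1,1}(0,T)$ yielding $w(\eta) \in V^\perp$ in a bounded linear fashion. Composing with the bounded linear operator $L$ shows that $\eta \mapsto L(w(\eta)) = (\mathcal{N}_L w)(\eta)$ is bounded and linear from $W_0^{1,1}(0,T)$ into $L_0^2$, with operator norm controlled by $\|L\|_{\mathscr{L}(V^\perp, L_0^2)}$ times that of $w$; hence $p = \mathcal{N}_L w \in \mathscr{L}(W_0^{1,1}(0,T);L_0^2) = W^{-1,\infty}(0,T;L_0^2)$.

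Next I would verify the identity $w = \nabla p$, where $\nabla$ is understood to act on a time-distribution by $(\nabla p)(\eta) := \nabla(p(\eta))$ in $H^{-1}$ for every $\eta \in W_0^{1,1}(0,T)$, so that $\nabla p \in W^{-1,\infty}(0,T;H^{-1})$. Fixing such an $\eta$ and using $p(\eta) = L(w(\eta))$ together with the relation $\nabla \circ L = \textup{Id}$ from Lemma~\ref{Lem_Rham} (with $\textup{Id}$ the inclusion $V^\perp \hookrightarrow H^{-1}$), we obtain $\nabla(p(\eta)) = \nabla(L(w(\eta))) = w(\eta)$ in $H^{-1}$. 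Since $\eta \in W_0^{1,1}(0,T)$ was arbitrary, this gives $\nabla p = w$ in $W^{-1,\infty}(0,T;H^{-1})$.

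Finally, for uniqueness, suppose $\tilde p \in W^{-1,\infty}(0,T;L_0^2)$ also satisfies $\nabla \tilde p = w$. Then for every $\eta$ we have $\nabla(\tilde p(\eta) - p(\eta)) = 0$ in $H^{-1}$ with $\tilde p(\eta) - p(\eta) \in L_0^2$, and the uniqueness assertion of Lemma~\ref{Lem_Rham} (applied with $f = 0 \in V^\perp$) forces $\tilde p(\eta) = p(\eta)$; as $\eta$ ranges over $W_0^{1,1}(0,T)$ this yields $\tilde p = p$. I do not expect any genuine analytic obstacle here: the only point requiring care is the bookkeeping, namely interpreting $\nabla$ consistently across the several time-distribution spaces and confirming that boundedness and the left-inverse property of $L$ transfer to its Nemytskii lift $\mathcal{N}_L$; once this is in place the corollary is an immediate consequence of Lemma~\ref{Lem_Rham}.
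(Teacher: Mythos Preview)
Your proposal is correct and follows exactly the same approach as the paper: define $p := \mathcal{N}_L w$, check the identity $w = \nabla p$ by evaluating on each $\eta \in W_0^{1,1}(0,T)$ via $\nabla \circ L = \textup{Id}$, and invoke the uniqueness part of Lemma~\ref{Lem_Rham}. The paper's own proof is more terse---it simply writes $w(\eta) = \nabla L(w(\eta)) = \nabla(\mathcal{N}_L w)(\eta)$ and declares uniqueness ``clear''---so your version is a faithful expansion rather than a different argument.
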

	\begin{proof}
		Since $w(\eta) \in V^\perp$ for all $\eta \in W_0^{1,1}(0,T)$, we have
		$$w(\eta)=\nabla L\big(w(\eta)\big) = \nabla (\mathcal{N}_L w)(\eta),  $$
		and we define $p=\mathcal{N}_L w \in W^{-1,\infty}(0,T;L_0^2)$, which is clearly unique.
	\end{proof}
	
	\section{Analysis of the Local Model} \label{Section_Analysis}
	In this section, we first state the definition of a weak solution to the system (\ref{Eq_Model}) with the boundary and initial conditions (\ref{Eq_InitialBoundary}), and then we establish the existence of a weak solution by employing the Faedo--Galerkin method. 
	
	To simplify notation, we introduce the abbreviations
	\begin{alignat*}{6}
	&m_T &&= m_T(\phi_T,\phi_\sigma),
	\quad &&m_\sigma &&= m_\sigma(\phi_T,\phi_\sigma), \quad &&\nu &&= \nu(\phi_T,\phi_\sigma), \\
	&S_T &&= S_T(\phi_T,\phi_\sigma), \quad &&S_\sigma &&= S_\sigma(\phi_T,\phi_\sigma), \quad &&S_v &&= S_v(\phi_T,\phi_\sigma). 
	\end{alignat*}
	\begin{definition}[Weak solution] We call a quadruple $(\phi_T,\mu,\phi_\sigma,v)$ a weak solution of system \eqref{Eq_Model} if the functions $\phi_T,\mu,\phi_\sigma: (0,T) \times \Omega \to \mathbb{R}$, $v:(0,T) \times \Omega \to \mathbb{R}^d$ have the regularity
		\begin{alignat*}{2}
		&\phi_T &&\in H^1(0,T;(H^1)') \cap L^2(0,T;H^1), \\
		&\mu &&\in L^2(0,T;H^1), \\[-0.15cm]
		&\phi_\sigma &&\in W^{1,\tfrac{4}{d}}(0,T;H^{-1}) \cap \big(1+L^2 (0,T;H^1_0)\big) , \\[-0.15cm] 
		&v &&\in W^{1,\tfrac{4}{d}}(0,T;V') \cap L^4(0,T;[L^4]^d) \cap L^2 (0,T;V), \end{alignat*}
		fulfill the initial data $\phi_T(0)=\phi_{T,0}$, $\phi_{\sigma}(0)=\phi_{\sigma,0}$, $v(0)=v_0$, and satisfy the following variational form of \eqref{Eq_Model},
		\begin{equation}
		\begin{aligned}
		\langle \partial_t \phi_T, \varphi_1 \rangle_{H^1}&= (\phi_T v, \nabla \varphi_1) - (m_T \nabla \mu, \nabla \varphi_1) + (S_T,\varphi_1), \\
		(\mu, \varphi_2) &= (\Psi'(\phi_T),\varphi_2)+\varepsilon_T^2 (\nabla \phi_T, \nabla \varphi_2) - \chi_0 (\phi_\sigma, \varphi_2), \\
		\langle \partial_t \phi_\sigma, \varphi_3 \rangle_{H_0^1} &= (\phi_\sigma v,\nabla \varphi_3) - \delta_\sigma^{-1} (m_\sigma \nabla \phi_\sigma,\nabla \varphi_3) + \chi_0 (m_\sigma \nabla \phi_T, \nabla \varphi_3) + (S_\sigma, \varphi_3), \\
		\langle \partial_t v, \varphi_4 \rangle_V &= - \alpha( v, \varphi_4) - (\nu \textup{D} v, \textup{D} \varphi_4) - F_1( |v| v, \varphi_4) - F_2 ( |v|^2 v, \varphi_4)  + (S_v,\varphi_4),
		\end{aligned}
		\label{Eq_ModelWeak}
		\end{equation}
		for all $\varphi_1,\varphi_2 \in H^1, \varphi_3 \in H_0^1, \varphi_4 \in V$. 
		\label{Def_Weak}
	\end{definition}
	In the variational form, we use the divergence-free space $V$ as the test function space of the Darcy--Forchheimer--Brinkman equation. Therefore, the pressure $p$ is eliminated from the equation. After proving the exsitence of a weak solution, we can associate a distributional pressure to the solution quadruple using the de Rham lemma; see Lemma \ref{Lem_Rham}. 
	
	A first principal result of this paper involves stating the existence of a weak solution to the model (\ref{Eq_Model}) in the sense of Definition \ref{Def_Weak}. 
	
	\begin{theorem}[Existence of a global weak solution] Let the following assumptions hold: \\
		\textup{(A1)} $\Omega \subset \mathbb{R}^d$, $d\in \{2,3\}$, is a bounded Lipschitz domain and $T>0$.\\
		\textup{(A2)} $\phi_{T,0} \in H^1$, $\phi_{\sigma,0} \in L^2$, $v_0 \in H$. \\
		\textup{(A3)} $m_T,m_\sigma,\nu \in C_b(\mathbb{R}^2)$ such that $m_0 \leq m_T(x),m_\sigma(x),\nu(x) \leq m_\infty$ for positive constants $m_0,m_\infty<\infty$. \\
		\textup{(A4)} $S_T,S_\sigma,S_v$ are of the form $S_T=\lambda_T  \phi_\sigma g(\phi_T) - \lambda_A \phi_T$, $S_\sigma=-\lambda_\sigma \phi_\sigma h(\phi_T)$ for $g,h \in C_b(\mathbb{R})$, 
		and $S_v=(\mu+\chi_0 \phi_\sigma) \nabla \phi_T$, $\lambda_T,\lambda_A,\lambda_\sigma \geq 0$. \\
		\textup{(A5)} $\Psi \in C^2(\mathbb{R})$ is such that $\Psi(x)\geq C (|x|^2-1)$, $|\Psi'(x)| \leq C(|x|+1)$, and $|\Psi''(x)|\leq C(|x|^4+1)$. \\
		Then there exists a weak solution quadruple $(\phi_T, \mu, \phi_\sigma, v)$ to \eqref{Eq_Model} in the sense of Definition \ref{Def_Weak}. Moreover, the solution quadruple has the regularity:
		\begin{alignat*}{2}
		&\phi_T &&\in C([0,T];L^2) \cap C_w([0,T];H^1), \\
		&\phi_\sigma &&\in \begin{cases} C([0,T];L^2), &d=2, \\ C([0,T];H^{-1})  \cap C_w([0,T];L^2), &d=3,\end{cases} \\ &v &&\in \begin{cases} C([0,T];H), &d=2, \\ C([0,T];V') \cap C_w([0,T];H), &d=3. \end{cases} 
		\end{alignat*}
		Additionally, there is a unique $p \in W^{-1,\infty}(0,T;L_0^2)$ such that $(\phi_T,\mu,\phi_\sigma,v,p)$ is a solution quintuple to \eqref{Eq_Model} in the distributional sense.
		\label{Thm_Existence}
	\end{theorem}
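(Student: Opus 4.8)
The plan is a standard Faedo--Galerkin argument: construct finite-dimensional approximations, derive \emph{a priori} bounds from a discrete counterpart of the Ginzburg--Landau energy balance, pass to the limit by compactness, and recover the pressure via the de Rham lemma. For the discretization I would take $\{w_i\}_{i\in\mathbb{N}}$ to be the $L^2$-orthonormal eigenfunctions of the Neumann Laplacian (a basis of $H^1$, with the constant among them), $\{z_i\}$ the Dirichlet eigenfunctions (a basis of $H^1_0$), and $\{u_i\}$ a basis of $V$, and look for $\phi_T^n=\sum_{i=1}^n a_i(t)w_i$, $\phi_\sigma^n=1+\sum_{i=1}^n b_i(t)z_i$, $v^n=\sum_{i=1}^n c_i(t)u_i$, with $\mu^n=\sum_{i=1}^n d_i(t)w_i$ obtained algebraically from the second line of \eqref{Eq_ModelWeak} tested against $w_j$ (so $\mu^n$ is an explicit continuous function of $(a,b)$ since $\Psi\in C^2$). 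Substituting $\mu^n$ closes \eqref{Eq_ModelWeak} into an ODE system for $(a,b,c)$ with continuous right-hand side by (A3)--(A5); as the nonlinearities are only continuous, I would invoke Peano's theorem for local solvability on a maximal interval $[0,t_n)$, taking as initial data the $L^2$-projections of $\phi_{T,0},\phi_{\sigma,0},v_0$ (which converge in $H^1$, $L^2$, $H$ respectively), and then use the bounds below to conclude $t_n=T$.

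For the \emph{a priori} estimates I would test the first equation with $\mu^n$, the second with $\partial_t\phi_T^n$ and subtract, then add the third tested with $\phi_\sigma^n-1\in H^1_0$ and the fourth with $v^n$. Since $\div v^n=0$, one has $(\phi_T^n v^n,\nabla\mu^n)=-(\mu^n\nabla\phi_T^n,v^n)$ and $(\phi_\sigma^n v^n,\nabla\phi_\sigma^n)=0$, so the Korteweg force in $S_v$ cancels exactly and only the lower-order chemotactic cross-terms $\chi_0(\phi_\sigma^n\nabla\phi_T^n,v^n)$ and $\chi_0(m_\sigma\nabla\phi_T^n,\nabla\phi_\sigma^n)$ survive. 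This yields an identity of the schematic form $\frac{d}{dt}\big(\mathcal{E}(\phi_T^n,\phi_\sigma^n)+\tfrac12\|\phi_\sigma^n-1\|_{L^2}^2+\tfrac12\|v^n\|_{L^2}^2\big)+m_0\|\nabla\mu^n\|_{L^2}^2+c\|\nabla\phi_\sigma^n\|_{L^2}^2+\nu_0\|\textup{D}v^n\|_{L^2}^2+\alpha\|v^n\|_{L^2}^2+F_1\|v^n\|_{L^3}^3+F_2\|v^n\|_{L^4}^4\leq(\text{cross and source terms})$, whose right-hand side is absorbed via Korn and Poincar\'e, the coercivity and growth bounds on $\Psi,\Psi'$ from (A5), Young's inequality and the Gagliardo--Nirenberg/Ladyzhenskaya inequalities (the admissible interpolation exponents depending on $d\le3$, which is exactly what produces the $d$-dependent regularity classes in the statement). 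Lemma~\ref{Lem_Gronwall} then gives uniform bounds for $\phi_T^n$ in $L^\infty(0,T;H^1)\cap L^2(0,T;H^1)$, for $\mu^n$ in $L^2(0,T;H^1)$ (recovering $\overline{\mu^n}$ by testing the $\mu$-equation with the constant and then using Poincar\'e), for $\phi_\sigma^n$ in $L^\infty(0,T;L^2)\cap\big(1+L^2(0,T;H^1_0)\big)$, and for $v^n$ in $L^\infty(0,T;H)\cap L^2(0,T;V)\cap L^4(0,T;[L^4]^d)$; comparison in the equations bounds $\partial_t\phi_T^n$ in $L^2(0,T;(H^1)')$, $\partial_t\phi_\sigma^n$ in $L^{4/d}(0,T;H^{-1})$ and $\partial_t v^n$ in $L^{4/d}(0,T;V')$, the exponent $4/d$ being dictated by $\phi_\sigma^n v^n$ and the cubic drag $|v^n|^2v^n$ via parabolic interpolation.

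For the passage to the limit I would extract a subsequence converging weakly-$*$ in the energy spaces, weakly in the gradient norms and in $\mu^n$, and strongly, by Aubin--Lions \eqref{Eq_AubinLions}: $\phi_T^n\to\phi_T$ in $L^2(0,T;L^r)$ ($r<6$ if $d=3$, any $r<\infty$ if $d=2$), $\phi_\sigma^n\to\phi_\sigma$ and $v^n\to v$ in $L^2(0,T;L^2)$, the last also in $L^q(0,T;[L^q]^d)$ for $q<4$ by interpolation with the $L^4L^4$ bound. These pin down every nonlinearity: $\Psi'(\phi_T^n)\rightharpoonup\Psi'(\phi_T)$ from a.e.\ convergence and the linear growth of $\Psi'$; $m_\bullet(\phi_T^n,\phi_\sigma^n)\to m_\bullet(\phi_T,\phi_\sigma)$ strongly in every $L^qL^q$ by dominated convergence, so the mobility fluxes converge as strong$\times$weak; the convective products $\phi_T^n v^n,\phi_\sigma^n v^n$ and the drags $|v^n|v^n,|v^n|^2v^n$ converge as strong$\times$weak (resp.\ a.e.\ plus a uniform $L^{4/3}L^{4/3}$ bound); and the capillary force is handled by rewriting, using that the Galerkin test functions are divergence-free, $((\mu^n+\chi_0\phi_\sigma^n)\nabla\phi_T^n,\varphi)=-(\phi_T^n\varphi,\nabla\mu^n)+\chi_0(\phi_\sigma^n\nabla\phi_T^n,\varphi)$ and taking a strong$\times$weak limit. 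The initial conditions and the claimed $C([0,T];\cdot)$/$C_w([0,T];\cdot)$ regularity follow from \eqref{Eq_ContEmbedding1}--\eqref{Eq_ContEmbedding2} and the convergence of the projected data (the $d=3$ cases being weaker because $\partial_t\phi_\sigma,\partial_t v$ are then only $L^{4/d}$ in time). Finally, since the limit satisfies the fourth equation of \eqref{Eq_ModelWeak} against all $\varphi_4\in V$, the residual $w:=\partial_t v+\alpha v-\div(\nu\textup{D}v)+F_1|v|v+F_2|v|^2v-S_v$ lies in $W^{-1,\infty}(0,T;V^\perp)$, so Corollary~\ref{Cor_Rham} produces the unique $p\in W^{-1,\infty}(0,T;L_0^2)$ with $w=\nabla p$.

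The hard part is the closure of the \emph{a priori} estimate: the Cahn--Hilliard, nutrient and Darcy--Forchheimer--Brinkman blocks are coupled through $\chi_0\phi_\sigma\nabla\phi_T$, $\chi_0 m_\sigma\nabla\phi_T$ and $(\mu+\chi_0\phi_\sigma)\nabla\phi_T$, and — apart from the Korteweg term, which cancels by incompressibility — these must be absorbed into the rather weak dissipation $\|\nabla\mu\|_{L^2}^2+\|\nabla\phi_\sigma\|_{L^2}^2+\|\textup{D}v\|_{L^2}^2+\|v\|_{L^4}^4$ by dimension-critical interpolation; it is precisely here that $d\le3$ and the Forchheimer term $F_2|v|^2v$ (the sole source of the $L^4L^4$ control of $v$) are essential, and a crude estimate would only yield existence on a short time interval. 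Everything else — Galerkin solvability with merely continuous nonlinearities, and the identification of weak limits — is routine once these bounds are secured.
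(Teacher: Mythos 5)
Your overall architecture -- Galerkin spaces built from the three eigenvalue problems, Peano for the ODE system, energy estimate, time-derivative bounds with exponent $4/d$, Aubin--Lions, identification of the nonlinear limits, continuity via \eqref{Eq_ContEmbedding1}--\eqref{Eq_ContEmbedding2}, and the pressure via Corollary \ref{Cor_Rham} -- is the same as the paper's. The gap sits precisely in the step you yourself call the hard part: the closure of the a priori estimate. Testing the $\phi_T$-equation with $\mu^n$ alone and the potential equation with $\partial_t\phi_T^n$ leaves, in addition to the two cross terms you list, the chemotaxis term $\chi_0(\phi_\sigma^n,\partial_t\phi_T^n)$ coming from the last summand of the chemical potential; it is not dominated by the dissipation $\|\nabla\mu^n\|_{L^2}^2+\|\nabla\phi_\sigma^n\|_{L^2}^2+\|\textup{D}v^n\|_{L^2}^2+\|v^n\|_{L^4}^4$, and rewriting it as $\chi_0\frac{d}{dt}(\phi_\sigma^n,\phi_T^n)-\chi_0(\partial_t\phi_\sigma^n,\phi_T^n)$ only trades it for an equally uncontrolled term (and spoils the coercivity of the resulting energy unless $\chi_0$ is small). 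Moreover the surviving force term $\chi_0(\phi_\sigma^n\nabla\phi_T^n,v^n)$, which you propose to absorb by Gagliardo--Nirenberg/Ladyzhenskaya and Young, is supercritical with respect to the available information: $\phi_T^n$ is controlled only in $L^\infty(0,T;H^1)$ through the energy, and every admissible splitting leaves a remainder of the type $\|\nabla\phi_T^n\|_{L^2}^{8/3}$ or $\|\phi_T^n\|_{H^1}^4$, i.e.\ a superlinear Gronwall inequality giving only short-time bounds -- incompatible with the global statement. The presence of $F_2|v|^2v$ does not rescue this (and should not be needed: the theorem also covers $F_2=0$, cf.\ Remark \ref{Remark}), so the mechanism you rely on does not exist.

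The paper removes both obstructions simultaneously by a different choice of test functions: \eqref{Eq_ModelGalerkinT} is tested with $\mu^k+\chi_0\phi_\sigma^k$ and \eqref{Eq_ModelGalerkinMu} with $-\partial_t\phi_T^k$, so the pairing $\chi_0\langle\partial_t\phi_T^k,\phi_\sigma^k\rangle$ cancels between the two equations, while the convective terms $(\phi_T^kv^k,\nabla\mu^k+\chi_0\nabla\phi_\sigma^k)+(\phi_\sigma^kv^k,K\nabla\phi_\sigma^k)$ cancel the \emph{entire} tested source $(S_v^k,v^k)$, not just its $\mu\nabla\phi_T$ part; consequently no critical term involving $v^k$ survives at all. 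The price is the additional cross term $-\chi_0(m_T^k\nabla\mu^k,\nabla\phi_\sigma^k)$, and this is why the nutrient equation is tested with $K(\phi_\sigma^k-1)$ for a large weight $K$ chosen so that $\delta_\sigma^{-1}Km_0$ exceeds the $K$-independent constant multiplying $\|\nabla\phi_\sigma^k\|_{L^2}^2$ after Young's inequality; your unit weight $\tfrac12\|\phi_\sigma^n-1\|_{L^2}^2$ would force an unwanted smallness condition on $\chi_0 m_\infty/m_0$ even after adopting the correct test function. With these two modifications the energy inequality closes linearly, Lemma \ref{Lem_Gronwall} applies globally, and the remainder of your plan (derivative bounds, compactness, limit passage, regularity, de Rham) goes through exactly as you describe.
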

	\begin{proof} 
		To prove the existence of a weak solution, we use the Faedo--Galerkin method\cite{evans2010partial,salsa2016partial} and semi-discretize the original problem in space. The discretized model can be formulated as an ordinary differential equation system and by the Cauchy--Peano theorem we conclude the existence of a discrete solution. Having energy estimates, we deduce from the Banach--Alaogulu theorem the existence of limit functions which eventually form a weak solution. This method has fared popularly in the analysis of tumor models, e.g. see Refs.~\citen{frigeri2015diffuse,garcke2016global,garcke2016cahn,garcke2018cahn,ebenbeck2018analysis,garcke2017well,jiang2015well,lam2017thermodynamically,lowengrub2013analysis}. 
		
		\subsection*{Discretization in space} 
		We introduce the discrete spaces 
		\begin{align*} 
		W_k &=\text{span}\{ w_1,\dots,w_k\}, \\
		Y_k &=\text{span}\{ y_1,\dots,y_k\}, \\
		Z_k &=\text{span}\{ z_1,\dots,z_k\}, 
		\end{align*}
		where $w_j, y_j : \Omega \to \mathbb{R}, z_j : \Omega \to \mathbb{R}^d$ are the eigenfunctions to the eigenvalues $\lambda_j^w, \lambda_j^y, \lambda_j^z \in \mathbb{R}$ of the following respective problems
		\begin{alignat*}{1} 
		&\begin{cases} \begin{aligned}
		-\Delta w_j &= \lambda_j^w w_j &&\text{in } \Omega, \\
		\partial_n w_j &= 0 &&\text{on } \partial\Omega,
		\end{aligned} \end{cases} \\
		&\begin{cases} \begin{aligned}
		-\Delta y_j &= \lambda_j^y y_j &&\text{in } \Omega, \\
		y_j &= 0 &&\text{on } \partial\Omega,
		\end{aligned} \end{cases} \\
		&\begin{cases} \begin{aligned}
		-\Delta z_j &= \lambda_j^z z_j &&\text{in } \Omega, \\
		\div z_j &=0 &&\text{in } \Omega, \\
		z_j &= 0 &&\text{on } \partial\Omega.
		\end{aligned} \end{cases} 
		\end{alignat*}
		Since the Laplace operator is a compact, self-adjoint, injective operator, we conclude by the spectral theorem\cite{boyer2012mathematical,brezis2010functional,robinson2001infinite} that 
		\begin{alignat*}{3}
		&\{w_j\}_{j \in \mathbb{N}} &&\text{ is an orthonormal basis in } L^2 &&\text{ and orthogonal in } H^1, \\
		&\{y_j\}_{j \in \mathbb{N}} &&\text{ is an orthonormal basis in } L^2 &&\text{ and orthogonal in } H_0^1, \\
		&\{z_j\}_{j \in \mathbb{N}} &&\text{ is an orthonormal basis in } H &&\text{ and orthogonal in } V.
		\end{alignat*}
		Exploiting orthonormality of the eigenfunctions, we deduce that $W_k$ and $Y_k$ are dense in $L^2$, and $Z_k$ is dense in $H$. We introduce the orthogonal projections,
		$$\Pi_{W_k}: L^2 \to W_k, \quad \Pi_{Y_k} : L^2 \to Y_k, \quad  \Pi_{Z_k} : H \to Z_k,$$
		which can be written as $$\Pi_{W_k} u = \sum_{j=1}^k (u,w_j) w_j \quad \text{for all } u \in H^1,$$ and analogously for $\Pi_{Y_k}$ and $\Pi_{Z_k}$.
		
		We next consider the Galerkin approximations
		\begin{equation}\begin{aligned}
		\phi_T^k (t,x) &= \sum_{j=1}^k \alpha_j(t) w_j(x),
		&&\quad \mu^k (t,x) = \sum_{j=1}^k \beta_j(t) w_j(x), \\
		\phi_\sigma^k (t,x) &= 1+ \sum_{j=1}^k \gamma_j(t) y_j(x), 
		&&\quad v^k (t,x) = \sum_{j=1}^k \delta_j(t) z_j(x), 
		\end{aligned}
		\label{Eq_GalerkinApprox}
		\end{equation}
		where $\alpha_j, \beta_j, \gamma_j, \delta_j : (0,T) \to \mathbb{R}$ are coefficient functions for $j \in \{1,\dots,k\}$. 
		To simplify the notation we set
		\begin{alignat*}{5}
		m_T^k &= m_T(\phi_T^k,\phi_\sigma^k), \quad &m_\sigma^k &= m_\sigma(\phi_T^k,\phi_\sigma^k), \quad &\nu^k &=\nu(\phi_T^k,\phi_\sigma^k), \\
		S_T^k &= S_T(\phi_T^k,\phi_\sigma^k), \quad
		&S_\sigma^k &= S_\sigma(\phi_T^k,\phi_\sigma^k), \quad
		&S_v^k &= S_v(\phi_T^k, \phi_\sigma^k).
		\end{alignat*}
		The Galerkin system of the model \eqref{Eq_ModelWeak} then reads
		\begin{subequations}
			\label{Eq_ModelGalerkin}
			\begin{align}
			\langle \partial_t \phi_T^k,w_j \rangle_{H^1} &= (\phi_T^k v^k, \nabla w_j) - (m_T^k \nabla \mu^k, \nabla w_j) + (S_T^k,w_j), \label{Eq_ModelGalerkinT}\\
			(\mu^k, w_j) &= (\Psi'(\phi_T^k),w_j)+\varepsilon_T^2 (\nabla \phi_T^k, \nabla w_j) - \chi_0 (\phi_\sigma^k, w_j),\label{Eq_ModelGalerkinMu} \\
			\langle \partial_t \phi_\sigma^k,y_j \rangle_{H_0^1} &= (\phi_\sigma^k v^k,\nabla y_j) - \delta_\sigma^{-1}(m_\sigma^k \nabla \phi_\sigma^k,\nabla y_j) + \chi_0 (m_\sigma^k \nabla \phi_T^k, \nabla y_j) \label{Eq_ModelGalerkinS}  + (S_\sigma^k, y_j), \\
			\langle \partial_t v^k, z_j \rangle_V &= - \alpha (v^k,z_j) - (\nu^k \textup{D} v^k, \textup{D}  z_j) - F_1 (|v^k| v^k, z_j) \label{Eq_ModelGalerkinV} -F_2 (|v^k|^2 v^k, z_j) + (S_v^k,z_j),
			\end{align}
		\end{subequations}
		for all $j \in \{1,\dots,k\}$. We equip this system with the initial data 
		\begin{equation}
		\begin{aligned} \phi_{T}^k(0) &= \Pi_{W_k} \phi_{T,0} &&\text{in } L^2, \\
		\phi_\sigma^k(0) &=1+ \Pi_{Y_k} \phi_{\sigma,0} &&\text{in } L^2, \\
		v^k(0) &= \Pi_{Z_k} v_0 &&\text{in } H. \end{aligned}
		\label{Eq_GalerkinInitial} \end{equation}
		After inserting the Galerkin ansatz functions \eqref{Eq_GalerkinApprox} into the system \eqref{Eq_ModelGalerkin}, one can see that the Galerkin system is equivalent to a system of nonlinear ordinary differential equations in the $4k$ unknowns $\{ \alpha_j, \beta_j, \gamma_j, \delta_j \}_{1\leq j \leq k}$ with the initial data
		$$\begin{aligned} \alpha_j(0) &= (\phi_{T,0},w_j) &&\text{in } \Omega, \\
		\gamma_j(0) &= (\phi_{\sigma,0},y_j) &&\text{in } \Omega, \\
		\delta_j(0) &= (v_0,z_j) &&\text{in } \Omega. \end{aligned}$$
		Due to the continuity of the nonlinear functions $\Psi', m_T, m_\sigma, \nu$, the existence of solutions to (\ref{Eq_ModelGalerkin}) with data (\ref{Eq_GalerkinInitial}) follows from the standard theory of ordinary differential equations, according to the Cauchy--Peano theorem, see Theorem 1.2, Chapter 1 in Ref.~\citen{coddington1955theory}. We thus have local-in-time existence of a continuously differentiable solution quadruple, $$\begin{aligned} (\phi_T^k,\mu^k,\phi_\sigma^k,v^k) &\in C^1([0,T_k];W_k) \times C^1([0,T_k];W_k) \times \big(1+C^1([0,T_k];Y_k)\big) \times C^1([0,T_k];Z_k)\end{aligned}$$ to the Galerkin problem \eqref{Eq_ModelGalerkin} on some sufficiently short time interval $[0, T_k]$. 
		\subsection*{Energy estimates} Next, we extend the existence interval to $[0,T]$ by deriving $k$-independent estimates. In particular, these estimates allow us to deduce that the solution sequences converge to some limit functions as $k \to \infty$. It will turn out that exactly these limit functions will form a weak solution to our model \eqref{Eq_Model} in the sense of Definition \ref{Def_Weak}. 
		
		Testing \eqref{Eq_ModelGalerkinT} with $\mu^k+\chi_0 \phi_\sigma^k$, \eqref{Eq_ModelGalerkinMu} with $-\partial_t \phi_T^k$, \eqref{Eq_ModelGalerkinS} with $K(\phi_\sigma^k-1)$, $K>0$ to be specified, and \eqref{Eq_ModelGalerkinV} with $v^k$, gives the equation system,
		\begin{align*}
		\langle \partial_t \phi_T^k, \mu^k \rangle_{H^1} + \chi_0\langle \partial_t \phi_T^k, \phi_\sigma^k \rangle_{H^1} &= (\phi_T^k v^k, \nabla \mu^k+\chi_0 \nabla \phi_\sigma^k) \! - \! (m_T^k \nabla \mu^k, \nabla \mu^k+\chi_0 \nabla \phi_\sigma^k)+ (S_T^k,\mu^k+\chi_0 \phi_\sigma^k), \\
		-\langle \partial_t \phi_T^k,\mu^k \rangle_{H^1} &= -\langle\partial_t \phi_T^k,\Psi'(\phi^k_T)\rangle_{H^1} + \chi_0 \langle \partial_t \phi_T^k,\phi^k_\sigma\rangle_{H^1} - \varepsilon_T^2 \langle \nabla \partial_t \phi_T^k,\nabla \phi^k_T\rangle_{H^1}, \\
		\langle \partial_t\phi_\sigma^k,K(\phi_\sigma^k-1) \rangle_{H_0^1} &= (\phi_\sigma^k v^k,  K\nabla\phi_\sigma^k) \! - \! K(m_\sigma^k (\delta_\sigma^{-1}\nabla\phi_\sigma^k - \chi_0 \nabla \phi_T^k), \nabla \phi_\sigma^k ) + K (S_\sigma^k, \phi_\sigma^k -1), \\
		\langle \partial_t v^k,v^k\rangle_V &= -  (\nu^k \textup{D} v^k, \textup{D} v^k) - F_1 (|v^k| v^k, v^k) - F_2 (|v^k|^2 v^k,v^k)  - (\alpha v^k, v^k) +(S_v^k,v^k).
		\end{align*}
		We observe that the tested convective terms cancel each other together with the tested source term in the velocity equation. Indeed, noting that $v^k$ is divergence-free, we have by integration by parts 
		\begin{align*}
		(\phi_T^k v^k, \nabla \mu^k+\chi_0 \nabla \phi_\sigma^k)\!+\!(\phi_\sigma^k v^k,K \nabla\phi_\sigma^k) &=-(v^k \cdot \nabla \phi_T^k, \mu^k +\chi_0 \phi_\sigma^k ) \! - \! K (v^k \cdot \nabla \phi_\sigma^k, \phi_\sigma^k) \\ &= - (v^k, S_v^k),
		\end{align*}
		see also (A4) of Theorem \ref{Thm_Existence} for the assumed form of $S_v^k$. Here, we also used that $(\phi_\sigma^k v^k, \nabla \phi_\sigma^k)=-(v^k \cdot \nabla \phi_\sigma^k,\phi_\sigma^k)$ and therefore this term vanishes.
		
		Adding the four tested equations results in
		\begin{equation}
		\begin{aligned}
		&\frac{\text{d} }{\text{d} t} \bigg[ |\Psi(\phi_T^k)|_{L^1} +\frac{\varepsilon_T^2}{2} |\nabla \phi_T^k|^2_{L^2} + \frac{K}{2} |\phi_\sigma^k-1|^2_{L^2} +\frac12 |v^k|_{L^2} \bigg] 
		+ \delta_\sigma^{-1} K (m_\sigma^k, |\nabla \phi_\sigma^k|^2) \\ &\quad\,+ (m_T^k,|\nabla \mu^k|^2)+ (\nu^k,|\textup{D} v^k|^2) + F_1 |v^k|_{L^3}^3 + F_2 |v^k|_{L^4}^4 + \alpha |v^k|^2_{L^2} 
		\\ &=-\chi_0(m_T^k \nabla \mu^k, \nabla \phi_\sigma^k)+(S_T^k,\mu^k+\chi_0 \phi_\sigma^k) +  \chi_0 K (m_\sigma^k \nabla \phi_T^k, \nabla \phi_\sigma^k) +  K (S_\sigma^k,\phi_\sigma^k),
		\end{aligned}
		\label{Eq_Energy1}
		\end{equation}
		where $K>0$ can still be chosen appropriately.
		
		We now estimate the terms on the right hand side of this inequality. The Poincar\'e inequality applied to $\mu^k$ and $\phi_\sigma^k$ gives
		\begin{alignat*}{2} 
		|\mu^k|_{L^2} &\leq |\mu^k-\overline{\mu^k}|_{L^2} + |\overline{\mu^k}|_{L^2} &&\leq C |\nabla \mu^k|_{L^2} + |\Omega|^{-1/2} |\mu^k|_{L^1},\\
		|\phi_\sigma^k|_{L^2} &\leq |\phi_\sigma^k-1|_{L^2} + |1|_{L^2} &&\leq C |\nabla \phi_\sigma^k|_{L^2} + |\Omega|^{1/2}.
		\end{alignat*} 
		Therefore, we can estimate the tested source term $S_T^k$ using its assumed representation, see (A4) of Theorem \ref{Thm_Existence}, the Poincar\'e inequality, and the $\varepsilon$-Young inequality,
		\begin{equation}
		\begin{aligned}
		(S_T^k,\mu^k+\chi_0 \phi_\sigma^k) &= (\lambda_T \phi_\sigma^k g(\phi_T^k) - \lambda_A \phi_T^k, \mu^k+\chi_0 \phi_\sigma^k)
		\\ &= \lambda_T (\phi_\sigma^k g(\phi_T^k),\mu^k) + \lambda_T \chi_0 \big(|\phi_\sigma^k|^2,g(\phi_T^k)\big) - \lambda_A (\phi_T^k,\mu^k) 
		 - \lambda_A \chi_0 (\phi_T^k,\phi_\sigma^k)
		\\ &\leq C\left( |\phi_\sigma^k|_{L^2} |\mu^k|_{L^2} +  |\phi_\sigma^k|_{L^2}^2 + |\phi_T^k|_{L^2} |\mu^k|_{L^2} + |\phi_T^k|_{L^2} |\phi_\sigma^k|_{L^2}    \right)
		\\ &\leq \frac{m_0}{4} |\nabla \mu^k|_{L^2}^2 + C\left(1+ |\phi_\sigma^k|_{L^2}^2+|\mu^k|_{L^1}^2+|\phi_T^k|_{L^2}^2 \right).
		\end{aligned}
		\label{Eq_EnergyST}
		\end{equation}
		Testing \eqref{Eq_ModelGalerkinMu} with $1 \in H^1$, gives, together with the growth assumption (A5) on $\Psi'$,
		\begin{equation}
		\begin{aligned}
		|\mu^k|_{L^1} \leq |\Psi'(\phi_T^k)|_{L^1} + \chi_0 |\phi_\sigma^k|_{L^1}  &\leq C\left(1+ |\phi_T^k|_{L^2} + |\phi_\sigma^k|_{L^2}\right).
		\end{aligned}
		\label{Eq_EnergyMu}
		\end{equation}
		Similarly, using the representation of $S_\sigma^k$, see (A4), we get
		\begin{equation} K (S_\sigma^k,\phi_\sigma^k)= -K \lambda_\sigma (h(\phi_T^k),|\phi_\sigma^k|^2) \leq C K |\phi_\sigma^k|_{L^2}^2,
		\label{Eq_EnergySs}
		\end{equation}
		$K$ being a positive constant appearing in (\ref{Eq_Energy1}) to be chosen below.
		
		Now, using \eqref{Eq_EnergyST}--\eqref{Eq_EnergySs}, the H\"older inequality and the $\varepsilon$-Young inequality, we can estimate the right-hand side in \eqref{Eq_Energy1} as follows:
		\begin{align*}
		\text{(RHS)}  &\leq \chi_0 m_\infty |\nabla \mu^k|_{L^2} |\nabla \phi_\sigma^k|_{L^2}+\frac{m_0}{4} |\nabla \mu^k|_{L^2}^2 +C \left( 1+|\phi_\sigma^k|_{L^2}^2 + |\phi_T^k|_{L^2}^2 \right) \\
		&\quad\, + \chi_0 K m_\infty |\nabla \phi_T^k|_{L^2} |\nabla \phi_\sigma^k|_{L^2} + C K |\phi_\sigma^k|_{L^2}^2
		\\ &\leq \frac{m_0}{2} |\nabla \mu^k|^2_{L^2} \! + CK \left(1+ |\phi_\sigma^k-1|_{L^2}^2 + |\nabla \phi_T^k|_{L^2}^2 \right) \! + C \left( 1+ |\nabla \phi_\sigma^k|_{L^2}^2 + |\phi_T^k|_{L^2}^2 \right)\! .
		\end{align*}
		We note that in this inequality $C$ is independent of $K$. This is important since we choose $K$ such that $\delta_\sigma^{-1} K m_0 >C$ so that we can absorb $|\nabla \phi_\sigma^k|_{L^2}$ from the right hand side. Indeed, we get the following inequality from \eqref{Eq_Energy1},
		\begin{align*}
		&\frac{\textup{d} }{\textup{d} t} \bigg[ |\Psi(\phi_T^k)|_{L^1} + \frac{\varepsilon_T^2}{2} |\nabla \phi_T^k|^2_{L^2} +  \frac{K}{2} |\phi_\sigma^k-1|^2_{L^2} + \frac12 |v^k|_{L^2}^2 \bigg] 
		\\ &\quad\, + \left( \frac{K m_0}{\delta_\sigma} - C \right) |\nabla \phi_\sigma^k|^2_{L^2} + \frac{m_0}{2} |\nabla \mu^k|^2_{L^2} + m_0 |\textup{D} v^k|^2_{L^2} +  F_2 |v^k|_{L^4}^4
		\\ &\leq  C \left(1+ |\nabla \phi_T^k|^2_{L^2} + |\phi_\sigma^k-1|_{L^2}^2+ |\phi_T^k|_{L^2}^2 \right).
		\end{align*}
		Integrating this inequality over $(0,t)$, $t\in (0,T_k)$, and using the growth assumption (A5) on $\Psi$ gives
		\begin{align*}
		& |\phi_T^k(t)|_{L^2}^2 + |\nabla \phi_T^k(t)|^2_{L^2} +  |\phi_\sigma^k(t)-1|^2_{L^2} + |v^k(t)|_{L^2}^2 +  \|\nabla \phi_\sigma^k\|^2_{L^2(0,t;L^2)}  \\ &\quad\,+ \|\nabla \mu^k\|^2_{L^2(0,t;L^2)}  +  \|\textup{D} v^k\|^2_{L^2(0,t;L^2)} +  \|v^k\|_{L^4(0,t;L^4)}^4  \\ &\quad\, - C\left( \|\nabla \phi_T^k\|^2_{L^2(0,t;L^2)} + \|\phi_\sigma^k-1\|_{L^2(0,t;L^2)}^2 + \|\phi_T^k\|_{L^2(0,t;L^2)}\right)
		\\ &\leq  C \left(1+|\Psi(\phi^k_T(0))|_{L^1} + |\nabla \phi^k_T(0)|_{L^2}^2 + |\phi^k_{\sigma}(0)-1|_{L^2}^2+|v^k(0)|_{L^2}^2  \right).
		\end{align*}
		Applying the Gronwall lemma and taking the essential supremum over $t \in (0,T_k)$, gives
		\begin{equation}
		\begin{aligned}
		& \|\phi_T^k\|_{L^\infty(0,T_k;L^2)}^2 + \|\nabla \phi_T^k\|^2_{L^\infty(0,T_k;L^2)} +  \|\phi_\sigma^k-1\|^2_{L^\infty(0,T_k;L^2)} + \|v^k\|_{L^\infty(0,T_k;L^2)}^2
		\\ &\quad \, + \|\nabla \phi_\sigma^k\|^2_{L^2(0,T_k;L^2)}  + \|\nabla \mu^k\|^2_{L^2(0,T_k;L^2)} +   \|\textup{D} v^k\|^2_{L^2(0,T_k;L^2)} +  \|v^k\|_{L^4(0,T_k;L^4)}^4
		\\ &\leq  C \left(1+|\Psi(\phi_{T}^k(0))|_{L^1} + |\nabla \phi_T^k(0)|^2_{L^2} + |\phi_{\sigma}^k(0)-1|^2_{L^2}+|v^k(0)|^2_{L^2} \right) e^{CT}.
		\end{aligned}
		\label{Eq_Nonuniform}
		\end{equation}
		
		We have chosen the initial values of the Galerkin approximations as the orthogonal projections of the initial values of their counterpart, see \eqref{Eq_GalerkinInitial}. The operator norm of an orthogonal projection is bounded by $1$ and, therefore, uniform estimates are obtained in (\ref{Eq_Nonuniform}); for example
		$$|v^k(0)|_{L^2}^2 = |\Pi_{Z_k} v_{0}|_{L^2}^2 \leq |v_0|_{L^2}^2.$$
		We note that we have to invoke the growth estimate (A5) to treat the term involving $\Psi$ in the following way:
		$$|\Psi(\phi_T^k(0))|_{L^1} \leq C+ C |\phi_T^k(0)|_{L^2}^2 = C + C |\Pi_{W_k} \phi_{T,0}|_{L^2}^2 \leq C + C |\phi_{T,0}|_{L^2}^2.$$
		Now, these $k$-independent estimates allow us to extend the time interval by setting $T_k=T$ for all $k \in \mathbb{N}$.
		Therefore, we have the final uniform energy estimate,
		\begin{equation}\label{Eq_FinalEnergy}
		\begin{aligned}
		& \|\phi_T^k\|_{L^\infty H^1}^2 + \|\mu^k\|^2_{L^2H^1}+  \|\phi_\sigma^k\|^2_{L^\infty L^2}  +  \|\phi_\sigma^k-1\|^2_{L^2H^1_0} + \|v^k\|_{L^\infty H}^2  + \|v^k\|^2_{L^2 V} +  \|v^k\|_{L^4L^4}^4
		\\ &\leq  C \times \left(1+ |\phi_{T,0}|^2_{H^1} + |\phi_{\sigma,0}|^2_{L^2}+|v_0|^2_{L^2} \right)  \times \exp(CT).
		\end{aligned}
		\end{equation}
		
		\subsection*{Weak convergence} 
		Next, we prove that there are subsequences of $\phi_T^k,\mu^k,\phi_\sigma^k,v^k$, which converge to a weak solution of our model (\ref{Eq_Model}) in the sense of Definition \ref{Def_Weak}. From the energy estimate (\ref{Eq_FinalEnergy}) we deduce that
		\begin{equation}\begin{aligned}
		\{\phi_T^k\}_{k \in \mathbb{N}} &\text{ is bounded in } L^\infty(0,T;H^1), \\
		\{\mu^k\}_{k \in \mathbb{N}} &\text{ is bounded in } L^2(0,T;H^1), \\
		\{\phi_\sigma^k\}_{k \in \mathbb{N}} &\text{ is bounded in } L^\infty(0,T;L^2) \cap \big(1+L^2(0,T;H^1_0)\big), \\
		\{v^k\}_{k \in \mathbb{N}} &\text{ is bounded in } L^\infty(0,T;H) \cap L^2(0,T;V) \cap L^4(0,T;[L^4]^d),
		\end{aligned} \label{Eq_Bounds1} \end{equation}
		and, by the Banach--Alaoglu theorem, these bounded sequences have weakly convergent subsequences. By a typical abuse of notation, we drop the subsequence index. Consequently, there are functions $\phi_T, \mu, \phi_\sigma:(0,T)\times \Omega \to \mathbb{R}$, $v :(0,T) \times \Omega \to \mathbb{R}^d$ such that
		\begin{equation}
		\begin{alignedat}{2}
		\phi_T^k &\longweak \phi_T &&\text{ weakly-$*$ in } L^\infty(0,T;H^1),
		\\
		\mu^k &\longweak \mu &&\text{ weakly\phantom{-*} in } L^2(0,T;H^1), \\
		\phi_\sigma^k &\longweak \phi_\sigma &&\text{ weakly-$*$ in } L^\infty(0,T;L^2) \cap \big(1+L^2(0,T;H^1_0)\big), \\
		v^k &\longweak v &&\text{ weakly-$*$ in } L^\infty(0,T;H) \cap L^2(0,T;V) \cap L^4(0,T;[L^4]^d),
		\end{alignedat}
		\label{Eq_WeakConv}
		\end{equation}
		as $k \to \infty$. 
		
		\subsection*{Strong convergence} We now consider taking the limit $k \to \infty$ in the Galerkin system \eqref{Eq_ModelGalerkin} in hopes to attain the initial variational system \eqref{Eq_ModelWeak}. Since the equations in \eqref{Eq_ModelGalerkin} are nonlinear in $\phi_T^k,\phi_\sigma^k,v^k$, we want to achieve strong convergence of these sequences before we take the limit in \eqref{Eq_ModelGalerkin}. Therefore, our goal is to bound their time derivatives and applying the Aubin--Lions lemma (\ref{Eq_AubinLions}).
		
		Let $(\zeta,\eta,\xi)$ be such that $\zeta \in L^2(0,T;H^1)$, $\eta  \in L^{4/(4-d)}(0,T;H_0^1)$, $\xi \in L^{4/(4-d)}(0,T;V)$ and 
		$$\Pi_{W_k} \zeta = \sum_{j=1}^k \zeta_j^k w_j, \quad \Pi_{Y_k} \varphi = \sum_{j=1}^k \eta_j^k y_j, \quad \Pi_{Z_k} \xi = \sum_{j=1}^k \xi_j^k z_j,$$
		with coefficients $\{ \zeta_j^k \}_{j=1}^k,\{ \eta_j^k \}_{j=1}^k,\{ \xi_j^k \}_{j=1}^k$.  Multiplying equation \eqref{Eq_ModelGalerkinT} by $\zeta_j^k$, \eqref{Eq_ModelGalerkinS} by $\eta_j^k$ and \eqref{Eq_ModelGalerkinV} by $\xi_j^k$, we sum up each equation from $j=1$ to $k$ and integrate in time over $(0,T)$, to obtain the equation system,
		\begin{subequations}
			\begin{align}
			\int_0^T \langle \partial_t \phi_T^k,\zeta \rangle_{H^1} \textup{d} t &= \int_0^T (\phi_T^k v^k, \nabla \Pi_{W_k} \zeta) - (m_T^k \nabla \mu^k, \nabla \Pi_{W_k} \zeta) \label{Eq_DerivativeT}   + (S_T^k,\Pi_{W_k} \zeta) \, \textup{d} t, 
			\\
			\int_0^T\langle \partial_t \phi_\sigma^k, \varphi \rangle_{H_0^1} \textup{d} t &=\int_0^T (\phi_\sigma^k v^k,\nabla \Pi_{Y_k} \varphi) - \delta_\sigma^{-1}(m_\sigma^k \nabla \phi_\sigma^k,\nabla \Pi_{Y_k} \varphi)  \label{Eq_DerivativeS}  + \chi_0 (m_\sigma^k \nabla \phi_T^k, \nabla \Pi_{Y_k} \varphi) \\[-0.4cm] &\phantom{=\int_0^T} \, + (S_\sigma^k, \Pi_{Y_k} \varphi) \, \textup{d} t, \notag \\
			\int_0^T\langle \partial_t v^k,  \xi \rangle_V  \textup{d} t&= \int_0^T - \alpha (v^k,\Pi_{Z_k} \xi)  - (\nu^k \textup{D}  v^k, \textup{D}  \Pi_{Z_k} \xi)   \label{Eq_DerivativeV}  -  F_1 (|v^k| v^k, \Pi_{Z_k} \xi) \\[-0.4cm] &\phantom{=\int_0^T} \, -F_2 (|v^k|^2 v^k, \Pi_{Z_k} \xi) + (S_v^k,\Pi_{Z_k} \xi) \, \textup{d} t. \notag
			\end{align}
			\label{Eq_Derivative}
		\end{subequations}
		Each equation in \eqref{Eq_Derivative} can be estimated using the typical inequalities and the boundedness of the orthogonal projection. From (\ref{Eq_DerivativeT}), we find
		\begin{equation} \begin{aligned} \langle \partial_t \phi_T^k ,\zeta\rangle_{L^2(H^1)' \times L^2 H^1}  &\leq  \|\nabla \phi_T^k\|_{L^\infty L^2} \|v\|_{L^2 L^4} \|\Pi_{W_k} \zeta \|_{L^2 L^4} + m_\infty \|\nabla \mu^k\|_{L^2L^2} \|\nabla \Pi_{W_k} \zeta\|_{L^2L^2} \\ &\, \quad + \|S_T^k\|_{L^2L^2} \|\Pi_{W_k} \zeta \|_{L^2L^2}
		\\ &\leq C \|\zeta\|_{L^2 H^1}, 
		\end{aligned} \label{Eq_DerivativeTBound}
		\end{equation}
		and from (\ref{Eq_DerivativeS}), we get
		\begin{equation} \begin{aligned}
		\langle \partial_t \phi_\sigma^k ,\varphi\rangle_{L^{4/d} H^{-1} \times L^{4/(4-d)} H^1_0}  &\leq \|\phi_\sigma^k\|_{L^2 L^4} \|v\|_{L^4 L^4} \|\nabla \Pi_{Y_k} \varphi \|_{L^4 L^2} + \|S_\sigma^k\|_{L^2L^2} \|\Pi_{Y_k} \varphi\|_{L^2L^2}\\ &\quad \, + m_\infty \left( \delta_\sigma^{-1} \|\nabla \phi_\sigma^k\|_{L^2 L^2}+\chi_0 \|\nabla \phi_T^k\|_{L^2L^2} \right) \|\nabla \Pi_{Y_k}\varphi\|_{L^2L^2}  \\ &\leq C \|\varphi \|_{L^{4/(4-d)} H^1_0},
		\end{aligned}\label{Eq_DerivativeSBound}
		\end{equation}
		and (\ref{Eq_DerivativeV}) results in
		\begin{equation} \begin{aligned}
		\langle \partial_t v^k, \xi \rangle_{L^{4/3} V' \times L^4 V} &\leq C\big( \alpha \|v^k\|_{L^2L^2} \!+\! m_\infty \|\textup{D}  v^k\|_{L^2 L^2} \!+\! F_1 \||v^k| v^k\|_{L^2L^2}  \!+\! F_2 \||v^k|^2 v^k\|_{L^{4/3} L^{4/3}}   \\ &\quad\,+ \|\mu^k\|_{L^2L^4} \|\nabla \phi_T^k\|_{L^\infty L^2}  + \chi_0 \|\phi_\sigma^k\|_{L^2L^4} \|\nabla \phi_T^k\|_{L^\infty L^2} \big) \|\xi \|_{L^4 V}
		\\ &= C\big( \alpha \|v^k\|_{L^2L^2} + m_\infty \|\textup{D}  v^k\|_{L^2 L^2} + F_1 \|v^k\|_{L^4L^4}^2   + F_2 \|v^k\|_{L^4L^4}^3 \\ &\quad\,+  \|\mu^k\|_{L^2L^4} \|\nabla \phi_T^k\|_{L^\infty L^2}   + \chi_0 \|\phi_\sigma^k\|_{L^2L^4} \|\nabla \phi_T^k\|_{L^\infty L^2} \big) \|\xi \|_{L^4 V}
		\\ &\leq C\|\xi\|_{L^4 V}.
		\end{aligned}\label{Eq_DerivativeVBound}\end{equation}
		We note that we could have estimated the convective term in the nutrient equation without using the fact that $\{v^k\}_{k\in \mathbb{N}}$ is bounded in $L^4(0,T;[L^4]^d)$. This is particularly interesting for the case $F_2=0$ where that regularity is missing; see Remark \ref{Remark}. As a substitute, we apply the Gagliardo--Nirenberg inequality\cite{roubicek}
		$$|f|_{L^3} \leq C |\nabla f|_{L^2}^{1/2} |f|_{L^2}^{1/2} \quad \text{for all } f\in H_0^1,$$
		on $v^k$ and use the Sobolev embedding $H^1 \hookrightarrow L^6$ for $d\leq 3$ to get
		$$\begin{aligned} \int_0^T (\phi_\sigma^k v^k, \nabla \Pi_{Y_k} \xi)\textup{d} t &\leq \int_0^T |\nabla \phi_\sigma^k|_{L^2} |v^k|_{L^3} |\xi|_{L^6} \, \textup{d} t \\ &\leq \|\phi_\sigma^k\|_{L^2 H^1}^2 \|v^k\|_{L^2 V}^{1/2} \|v^k\|_{L^\infty H}^{1/2} \|\xi\|_{L^4 H^1}.   \end{aligned}$$
		From the inequalities (\ref{Eq_DerivativeTBound})--(\ref{Eq_DerivativeVBound}) and the bounds derived earlier, see (\ref{Eq_Bounds1}), we conclude that
		\begin{alignat*}{2}
		&\{\phi_T^k\}_{k \in \mathbb{N}} &&\text{ is bounded in } L^\infty(0,T;H^1) \cap H^1(0,T;(H^1)'), \\[-0.15cm]
		&\{\phi_\sigma^k\}_{k \in \mathbb{N}} &&\text{ is bounded in } L^\infty(0,T;L^2) \cap \big(1+L^2(0,T;H^1_0)\big) \cap W^{1,\tfrac{4}{d}}(0,T;H^{-1}),  \\[-0.15cm]
		&\{v^k_{\phantom{k}}\}_{k \in \mathbb{N}}  &&\text{ is bounded in } L^\infty(0,T;H) \! \cap \! L^4(0,T;[L^4]^d) \! \cap \! L^2(0,T;V) \! \cap \! W^{1,\tfrac{4}{d}}(0,T;V').
		\end{alignat*}
		Making use of the Aubin--Lions compactness lemma (\ref{Eq_AubinLions}),
		giving compact embeddings to achieve the strong convergences, we have
		\begin{equation} \label{Eq_StrongConv}
		\begin{alignedat}{2}
		\phi_T^k &\longrightarrow \phi_T &&\text{ strongly in } C([0,T];L^2), \\
		\phi_\sigma^k &\longrightarrow \phi_\sigma &&\text{ strongly in } L^2(0,T;L^2)\cap C([0,T];H^{-1}), \\
		v^k &\longrightarrow v &&\text{ strongly in } L^2(0,T;H) \cap C([0,T];V'),
		\end{alignedat}
		\end{equation}
		as $k \to \infty$.
		The strong convergence $\phi_T^k \to \phi_T$ in $C([0,T];L^2)$ implies $\phi_T(0)=\phi_{T,0}$ in $L^2$ and similarly $\phi_\sigma(0)=\phi_{\sigma,0}$ in $H^{-1}$ and $v(0)=v_0$ in $V'$. Therefore, the limit functions $(\phi_T,\mu,\phi_\sigma,v)$ of the Galerkin approximations already fulfill the initial data of the system \eqref{Eq_Model}. 
		
		In the case of $d=2$, we can also conclude $\phi_\sigma \in C([0,T];L^2)$ and $v \in C([0,T];H)$ due to the continuous embedding (\ref{Eq_ContEmbedding1}). Here, we used $[H^1,(H^1)']_{1/2}=L^2$ and $[V,V']_{1/2}=H$. In contrast, in the three-dimensional case, we deduce the respective weak continuities of $\phi_\sigma$ and $v$ due to the continuous embedding (\ref{Eq_ContEmbedding2}).
		
		\subsection*{Limit process} It remains to be shown that the limit functions also fulfill the variational form \eqref{Eq_ModelWeak}, as defined in Definition \ref{Def_Weak}. Multiplying the Galerkin system \eqref{Eq_ModelGalerkin} by $\eta \in C_c^\infty(0,T)$ and integrating from $0$ to $T$, gives
		\begin{alignat*}{2}
		\int_0^T \langle \partial_t \phi_T^k, w_j \rangle_{H^1} \eta(t) \, \textup{d} t &= \int_0^T (-\textcolor{black}{m_T^k \nabla \mu^k}+\textcolor{black}{\phi_T^k v^k},\nabla w_j) \eta(t)+(\textcolor{black}{S_T^k},w_j)_{L^2}\eta(t) \, \textup{d} t,  \\
		\int_0^T (\mu^k,w_j) \eta(t) \, \textup{d} t &= \int_0^T (\textcolor{black}{\Psi'(\phi^k_T)}-\chi_0 \phi^k_\sigma,w_j)_{L^2} \eta(t) + \varepsilon_T^2 (\nabla \phi^k_T,\nabla w_j)_{L^2} \eta(t) \, \textup{d} t,  \\
		\int_0^T \langle \partial_t \phi^k_\sigma, y_j \rangle_{H_0^1} \eta(t) \, \textup{d} t &= \int_0^T (-\textcolor{black}{m_\sigma^k(\delta_\sigma^{-1} \nabla \phi_\sigma^k -\chi_0 \nabla \phi^k_T)} + \textcolor{black}{\phi_\sigma^k v^k},\nabla y_j)_{L^2}\eta(t)   + (\textcolor{black}{S_\sigma^k},y_j)_{L^2}\eta(t)  \, \textup{d} t , \\
		\int_0^T \langle \partial_t v^k,z_j \rangle_{V} \eta(t) \, \textup{d} t &= \int_0^T (\nu^k \textup{D}  v^k, \textup{D}  z_j) \eta(t)+ (\textcolor{black}{F_1 |v^k| v^k}+\textcolor{black}{F_2 |v_k|^2v_k},z_j) \eta(t)  + (\alpha v^k+\textcolor{black}{S_v^k},z_j) \eta(t) \, \textup{d} t,
		\end{alignat*}
		for each $j\in \{1,\dots,k\}$. We take the limit $k \to \infty$ in each equation. The linear terms can be treated directly in the limit process since they can be justified via the weak convergences (\ref{Eq_WeakConv}), e.g. the functional
		$$\mu^k \mapsto \int_0^T (\mu^k,w_j) \eta(t) \, \textup{d} t \leq \|\mu^k\|_{L^2L^2} |w_j|_{L^2} |\eta|_{L^2} $$
		is linear and continuous on $L^2(0,T;L^2)$ and, hence, as $k\to \infty$
		$$\int_0^T (\mu^k,w_j) \eta(t) \, \textup{d} t  \longrightarrow \int_0^T (\mu,w_j) \eta(t) \, \textup{d} t.$$
		Thus, it remains to examine the \textcolor{black}{nonlinear} terms. We do so in the steps (i)--(vi) as follows. \\[0.1cm]
		(i) We have derived the convergences, see (\ref{Eq_StrongConv}), $$\begin{aligned}\phi_T^k \longrightarrow \phi_T &\text{ in } L^2(0,T;L^2) \cong L^2((0,T)\times \Omega) \\ \phi_\sigma^k \longrightarrow \phi_\sigma &\text{ in } L^2(0,T;L^2) \cong L^2((0,T)\times \Omega)\end{aligned}$$ as $k \to \infty$ and, consequently, we have by the continuity and boundedness of $m_T$, $$m_T^k=m_T\big(\phi^k_T(t,x),\phi_\sigma^k(t,x)\big) \longrightarrow m_T\big(\phi_T(t,x),\phi_\sigma(t,x)\big)=:m_T \text{ a.e. in } (0,T)\times \Omega$$ as $k \to \infty.$ 
		Applying the Lebesgue dominated convergence theorem, gives
		$$m_T^k  \nabla w_j \eta \longrightarrow m_T \nabla w_j \eta \text{ in } L^2((0,T) \times \Omega;\mathbb{R}^d)$$
		as $k \to \infty$ and, together with $\nabla \mu^k \rightharpoonup \nabla \mu$ weakly in $L^2((0,T)\times \Omega;\mathbb{R}^d)$ as $k\to\infty$, we have
		$$m_T(\phi^k_T) \eta \nabla w_j \cdot \nabla \mu^k \longrightarrow m_T(\phi_T) \eta \nabla w_j \cdot \nabla \mu \text{ in } L^1((0,T)\times \Omega)$$
		as $k \to \infty$. We use here the fact that the product of a strongly and a weakly converging sequence in $L^2$ converges strongly in $L^1$. The same procedure can be used with terms which involve the functions $m_\sigma^k$ and $\nu^k$. \\[0.1cm]
		(ii) By (\ref{Eq_StrongConv}), we have $\phi_T^k \to \phi_T$ in $L^2((0,T)\times \Omega)$ and $v^k \to v$ in $L^2((0,T)\times \Omega;\mathbb{R}^d)$ as $k\to \infty$, hence, as $k \to \infty$, $$\phi_T^k v^k  \cdot \nabla w_j \eta \longrightarrow \phi_T v \cdot \nabla w_j \eta \text{ in } L^1((0,T)\times \Omega).$$ 
		(iii) By the continuity and the growth assumptions on $\Psi'$, we have
		\begin{gather*} \Psi'\big(\phi^k_T(t,x)\big) \longrightarrow \Psi'\big(\phi_T(t,x)\big) \text{ a.e. in } (0,T)\times \Omega \text{ as } k\to \infty, \\
		|\Psi'(\phi^k_T) \eta w_j| \leq C(1+|\phi^k_T|)|\eta w_j|,
		\end{gather*}
		and the Lebesgue dominated convergence theorem yields as $k \to \infty$,
		$$\Psi'(\phi^k_T) \eta w_j \longrightarrow \Psi'(\phi_T) \eta w_j \text{ in } L^1((0,T)\times \Omega).$$
		(iv) Using the triangle inequality, we conclude that
		$$\begin{aligned}\big| |v^k| v^k - |v| v \big| &= \big| |v^k| v^k - |v^k| v + |v^k| v - |v| v \big| \\ &\leq |v^k| \cdot |v^k -v| + |v| \cdot \big| |v^k|-|v| \big| \\ &\leq |v^k-v| \big(|v^k|+|v|\big).
		\end{aligned}$$
		and, thus, taking the limit $k \to \infty$, results in
		$$|v^k| v^k\cdot z_j \eta \longrightarrow |v| v \cdot z_j \eta \text{ in } L^1((0,T)\times \Omega).$$
		(v) Similarly to (iv), we apply the triangle inequality to deduce that
		$$\begin{aligned}
		\big| |v_k|^2 v_k - |v|^2 v \big| &= \big| |v^k|^2 v_k - |v^k|^2 v + |v^k|^2 v - |v|^2 v^k + |v|^2 v^k - |v|^2 v \big|
		\\ &\leq |v^k - v | \big(|v_k|^2 + |v|^2 + |v^k| \cdot |v| \big)
		\end{aligned}$$
		and, again, taking the limit as $k \to \infty$ gives
		$$|v^k|^2 v^k\cdot z_j \eta \longrightarrow |v|^2 v \cdot z_j \eta \text{ in } L^1((0,T)\times \Omega).$$ 
		(vi) We have the strong convergence of $\mu^k$ and $\phi_\sigma^k$ in $L^2((0,T)\times \Omega)$. Together with the weak convergence of $\nabla \phi_T^k$ in $L^2((0,T)\times \Omega;\mathbb{R}^d)$ it is enough to conclude the convergence of the term involving $S_v^k=(\mu+\chi_0 \phi_\sigma^k)\nabla \phi_T^k$. 
		
		Using the density of $\cup_{k \in \mathbb{N}} W_k$ in $H^1$, $\cup_{k \in \mathbb{N}} Y_k$ in $H^1_0$, $\cup_{k \in \mathbb{N}} Z_k$ in $V$ and the fundamental lemma of the calculus of variations, we obtain a solution $(\phi_T,\mu,\phi_\sigma,v)$ to our model \eqref{Eq_Model} in the weak sense as defined in Definition \ref{Def_Weak}. 
	\end{proof}
	
	\begin{remark} We can associate a pressure function to the velocity so that we have a quintuple $(\phi_T,\mu,\phi_\sigma,v,p)$, which solves \eqref{Eq_Model} in distributional sense. See also Refs.~\citen{temam2001navier,simon1999existence} for a similar argument. Let 
		$$ w = -\partial_t v + \div (\nu \textup{D}  v) - \alpha v - F_1 |v| v - F_2 |v|^2 v + S_v.$$
		Then $w \in W^{-1,\infty}(0,T;H^{-1})=\mathscr{L}(W_0^{1,1}(0,T);H^{-1})$ and $\langle w(\psi), \xi \rangle_V = 0 $ for all $\psi \in W_0^{1,1}(0,T), \xi \in V$. Thus, by the corollary of the de Rham lemma, see Corollary \ref{Cor_Rham}, there is a unique $p \in W^{-1,\infty}(0,T;L_0^2)$ such that $\nabla p = w$. 
	\end{remark}
	
	\begin{remark}  If $F_2=0$, then we only have $u \in L^3(0,T;[L^3]^d)$ instead of $L^4(0,T;[L^4]^d)$. We have to control $$\int_0^T (|v| v, \zeta) \,\textup{d}t \leq \int_0^t |v|^2 |\zeta| \,\textup{d}t \leq |v^2|_{L^{3/2}L^{3/2}} |\zeta|_{L^3 L^3} \leq C |v|_{L^3L^3}^2 |\zeta|_{L^3 H^1}.$$
		Hence, we are even able to bound $\partial_t v$ in $L^{3/2}(0,T;V')$ instead of only $L^{4/3}(0,T;V')$. That means the velocity itself has less regularity, but its derivative's regularity is larger. Still, via the Aubin--Lions lemma, we are able to extract a subsequence of the Galerkin approximation $\{v^k\}_{k\in\mathbb{N}},$ which converges strongly to a function $v$ in $L^2(0,T;H)$.
		\label{Remark}
	\end{remark}
	
	\section{Nonlocal Effects} \label{Section_Nonlocal}
	In this section, we consider nonlocal  effects in a tumor growth model with a convective velocity, which obeys the unsteady Darcy--Forchheimer--Brinkman law. In biological models nonlocal  terms are used to describe competition for space\cite{szymanska2009mathematical}, cell-to-cell adhesion and cell-to-matrix adhesion\cite{chaplain2011mathematical,gerisch2008mathematical}, and the inclusion of such nonlocal effects in mesoscale models of tumor growth leads to systems of nonlinear integro-differential equations. 
	
	Models, that account for cell--matrix adhesion effects, involve matrix degrading enzymes, which erode the extracellular matrix and therefore, allow the migration of cells into tissue. Such systems have been analyzed in-depth in Refs.~\citen{stinner2014global,engwer2017structured,chaplain2011mathematical}. There, the tumor volume fraction is modelled by a reaction-diffusion equation, in contrast to the fourth order Cahn--Hilliard phase field equation in our setting.
	
	Following Refs.~\citen{chaplain2011mathematical,frigeri2017diffuse}, we consider cell--cell adhesion effects, which are responsible for the binding of one or more cells to each other through the reaction of proteins on the cell surfaces. It is reasonable to take cell-to-cell adhesion into account since the Ginzburg--Landau free energy functional (\ref{Eq_DerivationE}) leads to separation and surface tension effects\cite{frigeri2017diffuse}, which implies that the tumor cells prefer to adhere to each other rather than to the healthy cells. Moreover, cell-to-cell adhesion is a key factor in tissue formation, stability, and the breakdown of tissue.
	
	The well-known local Cahn--Hilliard equation has an phenomenological background\cite{cahn1958free} and in the search for a physical derivation Giacomin and Lebowitz studied the problem of phase separation from a microscopic background using the methods of statistical mechanics, see Refs.~\citen{giacomin1996exact,giacomin1997phase}. They obtained a nonlocal version of the Cahn--Hilliard equation with the underlying free energy functional
	\begin{equation} \label{helm}
	\int_\Omega \Psi(\phi_T(x))\, \text{d}x + \frac14 \int_\Omega \int_\Omega J(x-y) \big(\phi_T(x)-\phi_T(y)\big)^2 \, \text{d}y \text{d}x,
	\end{equation}
	which is also called the nonlocal Helmholtz free energy functional\cite{chen2001derivation,gal2017nonlocal}. Here, $J:\mathbb{R}^d \to \mathbb{R}$ is assumed to be a convolution kernel such that $J(x)=J(-x)$. One can obtain the classical Ginzburg--Landau free energy functional from (\ref{helm}) by choosing the kernel function $J(x,y)=k^{d+2} \chi_{[0,1]}(|k(x-y)|^2)$ and letting $k \to \infty$, see Refs.~\citen{frigeri2015a,guan2014convergent}, and therefore, the well-known Cahn--Hilliard model can be interpreted as an approximation of its nonlocal version.
	
	We modify the nonlocal Helmholtz free energy functional (\ref{helm}) to account for chemotactic effects, 
	\begin{equation*} \begin{aligned}
	\mathcal{E}(\phi_T,\phi_\sigma) &=\int_\Omega \Psi(\phi_T(x))\, \text{d}x + \frac14 \int_\Omega \int_\Omega J(x-y) \big(\phi_T(x)-\phi_T(y)\big)^2 \, \text{d}y \text{d}x  - \chi_0 \int_\Omega \phi_\sigma(x) \phi_T(x) \, \text{d}x. \end{aligned}
	\end{equation*}
	The chemical potential is given by the first variation of the system's underlying free energy functional $\mathcal{E}$ and therefore, we consider a class of long-range interactions in which nonlocal effects are characterized by chemical potentials of the form,
	\begin{equation*} \label{mu}
	\mu = \frac{\delta \mathcal{E}}{\delta \phi_T} = \Psi'(\phi_T)+ \int_\Omega J(x-y) \big(\phi_T(x)-\phi_T(y)\big) \, \textup{d} y - \chi_0 \phi_\sigma,
	\end{equation*}
	and recalling the convolution operator, which we denote by $*$,
	we can rewrite the chemical potential as
	\begin{equation*} \mu= \Psi'(\phi_T) + \phi_T \cdot J*1 - J*\phi_T - \chi_0 \phi_\sigma. \end{equation*}
	This leads directly to a nonlocal model governed by the system,
	\begin{equation} \begin{aligned}
	\partial_t \phi_T+ \div(\phi_T v) &=  \div (m_T(\phi_T,\phi_\sigma) \nabla \mu) + S_T(\phi_T,\phi_\sigma), \\
	\mu &= \Psi'(\phi_T) + \phi_T \cdot J*1 - J*\phi_T - \chi_0 \phi_\sigma, \\
	\partial_t \phi_\sigma + \div(\phi_\sigma v)&=  \div\!\big(m_\sigma(\phi_T,\phi_\sigma) (\delta_\sigma^{-1} \nabla \phi_\sigma - \chi_0   \nabla \phi_T)\big) + S_\sigma(\phi_T,\phi_\sigma), \\
	\partial_t v + \alpha v &= \div (\nu(\phi_T,\phi_\sigma) \textup{D} v)- F_1 |v| v - F_2 |v|^2 v - \nabla p  +  S_v(\phi_T,\phi_\sigma),\\
	\div v &= 0,
	\end{aligned} 
	\label{Eq_ModelNonLocal} \end{equation}
	with the initial-boundary data as before, see (\ref{Eq_InitialBoundary}).
	
	The nonlocal  Cahn--Hilliard equation has been analyzed in Refs.~\citen{bates2005neumann,bates2005dirichlet,bates2006some,gajewski2003nonlocal,gal2017nonlocal}. In Ref.~\citen{colli2012global,frigeri2012nonlocal,frigeri2013strong,frigeri2015a} it has been coupled to the Navier--Stokes equation, in Ref.~\citen{della2018nonlocal} to the Darcy equation, in Ref.~\citen{della2016nonlocal} to the Brinkman equation and in Ref.~\citen{frigeri2017diffuse} to a reaction-diffusion equation. We briefly discuss modifications in the energy estimates of the nonlocal  model (\ref{Eq_ModelNonLocal}) in contrast to the estimates derived in the proof of Theorem 1 for the local model. Due to the integro-differential structure, some inequalities have to be analyzed again, but overall the existence of a weak solution remains valid for the nonlocal problem. Notice that we cannot expect $\phi_T \in L^\infty(0,T;H^1)$ since no Laplacian appears in the potential equation. Therefore, the term $\chi_0 \div (m_\sigma \nabla \phi_T)$ has to be treated again in the estimates with an additional assumption on the chemotaxis constant $\chi_0$.
	
	\begin{definition}[Weak solution] We call a quadruple $(\phi_T,\mu,\phi_\sigma,v)$ a weak solution of the system \eqref{Eq_ModelNonLocal} if the functions $\phi_T,\mu,\phi_\sigma : (0,T) \times \Omega \to \mathbb{R}, v:(0,T) \times \Omega \to \mathbb{R}^d$ have the regularity
		\begin{alignat*}{2}
		&\phi_T &&\in W^{1,\tfrac{4}{d}}(0,T;(H^1)')\cap L^\infty(0,T;L^2)  \cap L^2(0,T;H^1), \\
		&\mu &&\in L^2(0,T;H^1), \\[-0.15cm]
		&\phi_\sigma &&\in W^{1,\tfrac{4}{d}}(0,T;H^{-1}) \cap \big(1+L^2 (0,T;H^1_0)\big) , \\[-0.15cm]
		&v &&\in W^{1,\tfrac{4}{d}}(0,T;V') \cap L^4(0,T;[L^4]^d) \cap L^2 (0,T;V), \end{alignat*}
		fulfill the initial data $\phi_T(0)=\phi_{T,0}$, $\phi_{\sigma}(0)=\phi_{\sigma,0}$, $v(0)=v_0$ and satisfy the following variational form of \eqref{Eq_Model}
		\begin{subequations}
			\label{Eq_ModelNonLocalWeak}
			\begin{align}
			\langle \partial_t \phi_T, \varphi_1 \rangle_{H^1}&= (\phi_T v, \nabla \varphi_1) - (m_T \nabla \mu, \nabla \varphi_1) + (S_T,\varphi_1),\label{Eq_ModelNonLocalWeakTum} \\
			(\mu,\varphi_2) &= (\Psi'(\phi_T),\varphi_2) + (\phi_T \cdot J*1,\varphi_2)-(J*\phi_T,\varphi_2) - \chi_0 (\phi_\sigma,\varphi_2) \label{Eq_ModelNonLocalWeakMu}\\
			\langle \partial_t \phi_\sigma, \varphi_3 \rangle_{H_0^1} &= (\phi_\sigma v,\nabla \varphi_3) - \delta_\sigma^{-1} (m_\sigma \nabla \phi_\sigma,\nabla \varphi_3) + \chi_0 (m_\sigma \nabla \phi_T, \nabla \varphi_3) \label{Eq_ModelNonLocalWeakNut} + (S_\sigma, \varphi_3),  \\
			\langle \partial_t v, \varphi_4 \rangle_V &= - \alpha( v, \varphi_4) - (\nu \textup{D} v, \textup{D} \varphi_4) - F_1( |v| v, \varphi_4) - F_2 ( |v|^2 v, \varphi_4)  \label{Eq_ModelNonLocalWeakVel}  + (S_v,\varphi_4),
			\end{align}
		\end{subequations}
		for all $\varphi_1,\varphi_2 \in H^1, \varphi_3 \in H_0^1, \varphi_4 \in V$. 
		\label{Def_WeakNonLocal}
	\end{definition}

	\begin{theorem}[Existence of a global weak solution]
		\label{Thm_ExistenceNonLocal} Let \textup{(A1)--(A4)} hold and additionally: \\
		\textup{(A6)} $\Psi \in C^2(\mathbb{R})$ is such that $|\Psi(x)| \geq C_1 |x|^2 - C_2$, $\Psi''(x) \geq C_3
		-(J*1)(x)$,  and $|\Psi'(x)|\leq C_4(|x|+1)$ for $C_2,C_4>0$, $C_1 > \frac12 |J|_{L^1}-\tfrac12 (J*1)(x)$ for a.e. $x \in \Omega$ and 
		$$C_3 > \sqrt{\frac{2\chi_0 m_\infty}{m_0}} \cdot \frac{4\chi_0^2m_\infty^2 \delta_\sigma+2m_0 \chi_0 m_\infty \delta_\sigma + \chi_0^2 \delta_\sigma m_0^2}{2 m_0^2}.$$
		\textup{(A7)} $J \in W^{1,1}(\mathbb{R}^d)$ is even and $(J*1)(x) \geq 0$ for a.e. $x\in \Omega$. \\
		Then there exists a solution quadruple $(\phi_T, \mu, \phi_\sigma, v)$ to \eqref{Eq_ModelNonLocal} in the sense of Definition \ref{Def_WeakNonLocal}. 
		Moreover, the solution quadruple has the regularity
		\begin{alignat*}{2}
		&\phi_T &&\in \begin{cases} C([0,T];L^2), &d=2, \\ C([0,T];(H^1)')  \cap C_w([0,T];L^2), &d=3,\end{cases} \\
		&\phi_\sigma &&\in \begin{cases} C([0,T];L^2), &d=2, \\ C([0,T];H^{-1})  \cap C_w([0,T];L^2), &d=3,\end{cases} \\ &v &&\in \begin{cases} C([0,T];H), &d=2, \\ C([0,T];V') \cap C_w([0,T];H), &d=3. \end{cases} 
		\end{alignat*} Additionally, there is a unique $p \in W^{-1,\infty}(0,T;L_0^2)$ such that $(\phi_T,\mu,\phi_\sigma,v,p)$ is a solution quintuple to \eqref{Eq_ModelNonLocal} in the distributional sense.
	\end{theorem}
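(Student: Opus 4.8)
The plan is to run the Faedo--Galerkin scheme of the proof of Theorem \ref{Thm_Existence} essentially verbatim, modifying only what the nonlocal structure requires. I would use the same spectral bases $\{w_j\}$, $\{y_j\}$, $\{z_j\}$ and the ansatz \eqref{Eq_GalerkinApprox}, replacing the discrete potential equation \eqref{Eq_ModelGalerkinMu} by
\begin{equation*}
(\mu^k,w_j) = \big(\Psi'(\phi_T^k),w_j\big) + \big(\phi_T^k\cdot J*1,w_j\big) - \big(J*\phi_T^k,w_j\big) - \chi_0\big(\phi_\sigma^k,w_j\big).
\end{equation*}
Because $J\in W^{1,1}(\mathbb{R}^d)$ the maps $u\mapsto J*u$ and $u\mapsto u\cdot(J*1)$ are bounded on $L^2$, so the right-hand sides of the Galerkin ODE system remain continuous in the unknowns and Cauchy--Peano gives a local-in-time $C^1$ solution, as before.

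\emph{Energy estimate.} Testing the tumor equation with $\mu^k+\chi_0\phi_\sigma^k$, the nonlocal potential equation with $-\partial_t\phi_T^k$, the nutrient equation with $K(\phi_\sigma^k-1)$, and the velocity equation with $v^k$, and adding, the term $-(\mu^k,\partial_t\phi_T^k)$ now generates the time derivative of the nonlocal Helmholtz energy $\int_\Omega\Psi(\phi_T^k)+\tfrac14\int_\Omega\int_\Omega J(x-y)(\phi_T^k(x)-\phi_T^k(y))^2$; writing the double integral as $\tfrac12\big((\phi_T^k)^2,J*1\big)-\tfrac12\big(\phi_T^k,J*\phi_T^k\big)$ and invoking $(J*1)\ge 0$ from (A7) together with $C_1>\tfrac12|J|_{L^1}-\tfrac12(J*1)$ and $|\Psi(x)|\ge C_1|x|^2-C_2$ from (A6), this energy controls $\|\phi_T^k\|_{L^2}^2$ up to an additive constant, while the evenness of $J$ makes $\overline{\mu^k}$ behave as in the local case, so that $\mu^k\in L^2(0,T;H^1)$ still follows by Poincar\'e and the dissipation bound on $\nabla\mu^k$. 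The main difficulty is that, with no Laplacian in the potential equation, there is no $L^\infty(0,T;H^1)$ bound on $\phi_T^k$ available to absorb the chemotactic cross term $\chi_0\,\div(m_\sigma\nabla\phi_T)$; instead I would differentiate the potential equation,
\begin{equation*}
\big(\Psi''(\phi_T^k)+J*1\big)\nabla\phi_T^k = \nabla\mu^k+\chi_0\nabla\phi_\sigma^k-\phi_T^k(\nabla J*1)+\nabla J*\phi_T^k,
\end{equation*}
and use $\Psi''(\phi_T^k)+J*1\ge C_3>0$ from (A6) and Young's convolution inequality to get the pointwise-in-time bound $\|\nabla\phi_T^k\|_{L^2}\le C_3^{-1}\big(\|\nabla\mu^k\|_{L^2}+\chi_0\|\nabla\phi_\sigma^k\|_{L^2}+C\|\phi_T^k\|_{L^2}\big)$. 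Feeding this into the two problematic right-hand-side terms $-\chi_0(m_T^k\nabla\mu^k,\nabla\phi_\sigma^k)$ and $\chi_0 K(m_\sigma^k\nabla\phi_T^k,\nabla\phi_\sigma^k)$ and using the $\varepsilon$-Young inequality, the explicit lower bound on $C_3$ in (A6) is exactly what allows one to absorb the resulting multiples of $\|\nabla\mu^k\|_{L^2}^2$ and $\|\nabla\phi_\sigma^k\|_{L^2}^2$ into the dissipation terms $\tfrac{m_0}{2}\|\nabla\mu^k\|_{L^2}^2$ and $(\tfrac{Km_0}{\delta_\sigma}-C)\|\nabla\phi_\sigma^k\|_{L^2}^2$, with $K$ chosen large as in the local proof. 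A Gronwall argument (Lemma \ref{Lem_Gronwall}) then extends the solution to $[0,T]$ and yields $k$-independent bounds giving $\phi_T^k$ bounded in $L^\infty(0,T;L^2)\cap L^2(0,T;H^1)$, $\mu^k$ in $L^2(0,T;H^1)$, and $\phi_\sigma^k$, $v^k$ in the same spaces as in \eqref{Eq_FinalEnergy}.

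\emph{Compactness, passage to the limit, and pressure.} The time-derivative estimates \eqref{Eq_DerivativeTBound}--\eqref{Eq_DerivativeVBound} carry over unchanged (the convolution terms are linear and do not enter them), so $\partial_t\phi_T^k$, $\partial_t\phi_\sigma^k$, $\partial_t v^k$ are bounded in $L^{4/d}(0,T;(H^1)')$, $L^{4/d}(0,T;H^{-1})$, $L^{4/d}(0,T;V')$ respectively, and the Aubin--Lions lemma \eqref{Eq_AubinLions} (applicable since $4/d>1$ for $d\le 3$) yields $\phi_T^k\to\phi_T$, $\phi_\sigma^k\to\phi_\sigma$ strongly in $L^2(0,T;L^2)$, $v^k\to v$ strongly in $L^2(0,T;H)$, together with the stated continuity and weak continuity in time from \eqref{Eq_ContEmbedding1}--\eqref{Eq_ContEmbedding2}; the weakening to $C([0,T];(H^1)')\cap C_w([0,T];L^2)$ for $\phi_T$ in three dimensions is precisely because there $4/d<2$. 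Strong convergence also forces $\phi_T(0)=\phi_{T,0}$, $\phi_\sigma(0)=\phi_{\sigma,0}$, $v(0)=v_0$. In the limit, the genuinely nonlinear terms $m_T^k\nabla\mu^k$, $\phi_T^k v^k$, $\Psi'(\phi_T^k)$, $|v^k|v^k$, $|v^k|^2v^k$, $(\mu^k+\chi_0\phi_\sigma^k)\nabla\phi_T^k$ are handled exactly as in steps (i)--(vi) of the proof of Theorem \ref{Thm_Existence}, whereas the new terms $J*\phi_T^k$ and $\phi_T^k\cdot(J*1)$ are \emph{easier}: boundedness of convolution by $J\in L^1$ and of multiplication by $J*1\in L^\infty$ turns $\phi_T^k\to\phi_T$ in $L^2(0,T;L^2)$ directly into strong convergence of these terms. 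Finally, Corollary \ref{Cor_Rham} (de Rham) recovers the unique distributional pressure $p\in W^{-1,\infty}(0,T;L_0^2)$ from the velocity equation, word for word as in the local case. In short, the only place the argument genuinely departs from the proof of Theorem \ref{Thm_Existence} is the energy estimate, where the absence of a gradient term in $\mu$ is compensated by the coercivity and smallness conditions (A6)--(A7).
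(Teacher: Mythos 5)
Your strategy is the one the paper follows: reuse the Faedo--Galerkin/energy framework of Theorem \ref{Thm_Existence}, let the test function $-\partial_t\phi_T$ generate the nonlocal Helmholtz energy, use (A6)--(A7) together with Young's convolution inequality to make that energy control $\|\phi_T\|_{L^\infty(0,T;L^2)}$, and compensate for the missing $L^\infty(0,T;H^1)$ bound by extracting an estimate for $\|\nabla\phi_T\|_{L^2}$ from the potential equation via $\Psi''+J*1\ge C_3$ and $J\in W^{1,1}$, the size condition on $C_3$ (after $K$ is fixed large) being what absorbs the chemotaxis cross term; the limit passage and the de Rham pressure are as in the local case, with the convolution terms handled by linearity. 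The only stylistic difference is in how the gradient bound is produced: the paper estimates the pairing $(\nabla\mu,\nabla\phi_T)$ from above by Young and from below by expanding $\nabla\mu$, arriving at $\tfrac{m_0C_3^2}{16}|\nabla\phi_T|_{L^2}^2\le \tfrac{m_0}{4}|\nabla\mu|_{L^2}^2+C|\phi_T|_{L^2}^2+\tfrac{m_0\chi_0^2}{4}|\nabla\phi_\sigma|_{L^2}^2$, whereas you divide pointwise by $\Psi''(\phi_T)+J*1\ge C_3$; the two give the same type of bound with comparable constants, so this is cosmetic.

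Two steps are not correct as written. First, "differentiating the potential equation" is only legitimate for the continuous (a priori) estimate: at the Galerkin level the discrete potential relation says $\mu^k=\Pi_{W_k}\big(\Psi'(\phi_T^k)+\phi_T^k\,J*1-J*\phi_T^k-\chi_0\phi_\sigma^k\big)$, and since the argument of the projection does not lie in $W_k$ (already $\phi_\sigma^k\notin W_k$), the pointwise identity you wrote for $\nabla\phi_T^k$ is false. Either derive the energy estimate formally in the continuous setting, as the paper explicitly does, or test the discrete potential equation with $-\Delta\phi_T^k\in W_k$ (admissible because the $w_j$ are Neumann--Laplacian eigenfunctions), which reproduces the paper's identity for $(\nabla\mu^k,\nabla\phi_T^k)$. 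Second, the time-derivative estimates do not "carry over unchanged": \eqref{Eq_DerivativeTBound} and \eqref{Eq_DerivativeVBound} use $\|\nabla\phi_T^k\|_{L^\infty L^2}$, which is exactly the quantity the nonlocal model no longer provides. The convective term $\phi_T^k v^k$ and the source $S_v^k=(\mu^k+\chi_0\phi_\sigma^k)\nabla\phi_T^k$ must be re-estimated using only $\phi_T^k\in L^\infty(0,T;L^2)\cap L^2(0,T;H^1)$, e.g. by interpolation ($\phi_T^k\in L^4(0,T;L^3)$ for $d\le 3$) and, for $S_v^k$, by integrating by parts against divergence-free test functions so that $(S_v^k,\varphi_4)=-(\phi_T^k\nabla(\mu^k+\chi_0\phi_\sigma^k),\varphi_4)$. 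The paper is also terse here ("in the same way as before"), but your justification, taken literally, appeals to an estimate that fails in the nonlocal setting, and this re-estimation is precisely the source of the weaker time-continuity of $\phi_T$ stated in the theorem.
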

	\begin{proof}
		Since the approach is similar to the proof of Theorem \ref{Thm_Existence}, we will directly derive an energy estimate in the continuous setting.
		We take, as the test function in (\ref{Eq_ModelNonLocalWeakMu}), $\varphi_2=-\partial_t \phi_T$, which gives
		\begin{align*}-(\mu,\partial_t \phi_T) &=-\frac{\textup{d} t }{\textup{d} t} \! \left( |\Psi(\phi_T)|_{L^1} \!+ \! \frac12 |(J*1)^{1/2} \phi_T|_{L^2}^2 \!-\! \frac12 (\phi_T,J*\phi_T) \right) \! + \chi_0 (\phi_\sigma, \partial_t \phi_T)
		\\ &= -\frac{\textup{d} }{\textup{d} t} \! \left( |\Psi(\phi_T)|_{L^1}+ \frac14 \int_\Omega \int_\Omega J(x-y) \big(\phi_T(x)-\phi_T(y)\big)^2 \text{d$x$d$y$} \right)  + \chi_0 (\phi_\sigma, \partial_t \phi_T),
		\end{align*}
		where we used the fact that
		\begin{align*} &\frac{\textup{d} }{\textup{d} t} \int_\Omega \int_\Omega J(x-y) \big(\phi_T(x)-\phi_T(y)\big)^2\text{d$y$d$x$} 
		\\ &=2 \int_\Omega\int_\Omega J(x-y) [\phi_T(x)-\phi_T(y)]\cdot[\partial_t \phi_T(x)-\partial_t \phi_T(y)] \, \text{d$y$d$x$} 
		\\ &=4 \int_\Omega\int_\Omega J(x-y) [\phi_T(x)-\phi_T(y)] \partial_t \phi_T(x) \text{d$y$d$x$},
		\end{align*}
		since $J$ is assumed to be even, see (A7) in Theorem \ref{Thm_ExistenceNonLocal}. As in the local model, we use the test functions $\varphi_1=\mu+\chi_0\phi_\sigma$, $\varphi_3 = K(\phi_\sigma-1)$ and $\varphi_4=v$ in (\ref{Eq_ModelNonLocalWeak}), which gives, after adding the equations,
		\begin{equation} 
		\begin{aligned}
		&\frac{\textup{d} }{\textup{d} t} \left[ |\Psi(\phi_T)|_{L^1} + \frac{1}{4} \int_\Omega \int_\Omega J(x-y) \big(\phi_T(x)-\phi_T(y)\big)^2  \text{d$x$d$y$} + \frac{K}{2} |\phi_\sigma-1|_{L^2}^2 + \frac12 |v|_{L^2}^2 \right] 
		\\&\quad\, + (m_T,|\nabla \mu|^2) + K \delta_\sigma^{-1} (m_\sigma,|\nabla \phi_\sigma|^2) + (\nu, |\textup{D} v|^2)+ F_1 |v|_{L^3}^3 + F_2 |v|_{L^4}^4 +\alpha |v|_{L^2}^2 
		\\ &= -\chi_0 (m_T \nabla \mu, \nabla \phi_\sigma)+ (S_T,\mu+\chi_0 \phi_\sigma) + \chi_0 K (m_\sigma \nabla \phi_T,\nabla \phi_\sigma) +  K (S_\sigma, \phi_\sigma),
		\end{aligned}
		\label{Eq_EnergyNonLocal}
		\end{equation}
		where we again used that the tested convective terms cancel with $S_v$. We estimate the terms involving the source functions $S_T, S_\sigma$ as before in the local case, see (\ref{Eq_EnergyST}) and (\ref{Eq_EnergySs}). Also, as before in the local model, we additionally test with $\varphi_2=1$ in (\ref{Eq_ModelNonLocalWeakMu}) to deduce the following estimate on $\mu$:
		$$|\mu|_{L^1} \leq |\Psi'(\phi_T)|_{L^1}  + \chi_0 |\phi_\sigma|_{L^1} \leq C\left(1 + |\phi_T|_{L^2} + |\phi_\sigma|_{L^2} \right).$$
		
		Here, we used the fact that $(\phi_T,J*1)=(J*\phi_T,1)$, since $J$ is assumed to be even, see (A7) in Theorem \ref{Thm_ExistenceNonLocal}. Therefore, we can estimate the right hand side in (\ref{Eq_EnergyNonLocal}) as
		$$\begin{aligned}(\text{RHS}) &\leq \frac{m_0}{2} |\nabla \mu|_{L^2}^2 + \frac{\chi_0^2 m_\infty^2}{m_0} |\nabla \phi_\sigma|_{L^2}^2 + C\left(1+|\phi_\sigma|_{L^2}^2+|\phi_T|_{L^2}^2\right) +\frac{\chi_0 K^2 m_\infty}{2} |\nabla \phi_T|_{L^2}^2 + \frac{\chi_0  m_\infty}{2} |\nabla \phi_\sigma|_{L^2}^2.\end{aligned} $$
		Notice that at this point we have no information on $\phi_T$ on the left hand side of (\ref{Eq_EnergyNonLocal}), which is crucially needed to absorb the terms from the right hand side. We can do the following calculation due to the growth estimate on $\Psi$, see (A6),
		\begin{equation}
		\begin{aligned}
		&|\Psi(\phi_T)|_{L^1}+\frac{1}{4}\int_\Omega \int_\Omega J(x-y) \big(\phi_T(x)-\phi_T(y)\big)^2 \text{d$x$d$y$}
		\\ &= |\Psi(\phi_T)|_{L^1}+\frac12 |(J*1)^{1/2} \phi_T|_{L^2}^2 - \frac12 (\phi_T,J*\phi_T)
		\\ &\geq \int_\Omega \left[ C_1 + \frac12 (J*1)(x)  - \frac12 |J|_{L^1} \right] |\phi_T(x)|^2 \textup{d} x -C_2 |\Omega| 
		\\ &\geq C( |\phi_T|_{L^2}^2 - 1),
		\end{aligned}
		\label{Eq_NonLocalEstimate1} \end{equation}
		where we used the Young convolution inequality\cite{lieb2001analysis} to get
		$$(\phi_T,J*\phi_T)\leq |\phi_T|_{L^2} |J*\phi_T|_{L^2} \leq |\phi_T|_{L^2}^2 |J|_{L^1}.$$
		This estimate will give us $\phi_T \in L^\infty(0,T;L^2)$. Indeed, integrating \eqref{Eq_EnergyNonLocal} with respect to time on the interval $(0,s)$, $0<s<T$, and introducing the result into (\ref{Eq_NonLocalEstimate1}) and employing the estimates of the source terms gives
		\begin{equation}
		\begin{aligned}
		&C |\phi_T(s)|_{L^2}^2 + \frac{K}{2}|\phi_\sigma(s)|_{L^2}^2 + \frac12 |v(s)|_{L^2}^2 + \frac{m_0}{2} \|\nabla \mu\|_{L^2L^2}^2+ m_0 \|\textup{D} v\|_{L^2L^2}^2+ F_2 \|v\|_{L^4L^4}^4 \\&\quad\,+ \left( \frac{K m_0}{\delta_\sigma} - \frac{\chi_0^2 m_\infty^2}{m_0} - \frac{\chi_0 m_\infty}{2} \right) \|\nabla \phi_\sigma\|_{L^2L^2}^2 -\frac{\chi_0 K^2 m_\infty}{2} \|\nabla \phi_T\|_{L^2L^2}^2
		\\ &\leq C \left(1+ \text{IC}+\|\phi_T\|_{L^2L^2}^2+\|\phi_\sigma\|_{L^2L^2}^2 \right),
		\end{aligned}
		\label{Eq_NonLocalEnergy2}
		\end{equation}
		where
		$$\text{IC}=|\Psi(\phi_{T,0})|_{L^1} + \frac{1}{4} \int_\Omega \int_\Omega J(x-y) \big(\phi_{T,0}(x)-\phi_{T,0}(y)\big)^2  \text{d$x$d$y$} + \frac{K}{2} |\phi_{\sigma,0}-1|_{L^2}^2 + \frac12 |v_0|_{L^2}^2,$$
		and, due to assumption (A6), we have $|\Psi(\phi_{T,0})|_{L^1} \leq C(|\phi_{T,0}|_{L^2}^2+1)$. Note that we have a negative term on the left hand side in the inequality (\ref{Eq_NonLocalEnergy2}), which we still have to overpower so that we can apply the Gronwall lemma.
		We have, on one hand,
		$$ (\nabla \mu,\nabla \phi_T) \leq \frac{1}{C_3} |\nabla \mu|_{L^2}^2 + \frac{C_3}{4} |\nabla \phi_T|_{L^2}^2,$$
		and, on the other hand, using the growth estimate on $\Psi''$, see (A6), and again the Young convolution inequality, we deduce that
		\begin{align*}
		&(\nabla \mu,\nabla \phi_T) \\
		&=(\Psi''(\phi_T),|\nabla \phi_T|^2) + (|\nabla \phi_T|^2,J*1)+(\nabla(J*1),\phi_T \nabla \phi_T) - (\nabla J * \phi_T,\nabla \phi_T)  - \chi_0 (\nabla \phi_\sigma,\nabla \phi_T)
		\\ &\geq C_3 |\nabla \phi_T|_{L^2}^2  -|\nabla J|_{L^1} |\phi_T|_{L^2} |\nabla \phi_T|_{L^2} - \chi_0 |\nabla \phi_\sigma|_{L^2} |\nabla \phi_T|_{L^2}
		\\ &\geq \frac{C_3}{2} |\nabla \phi_T|_{L^2}^2 - C |\phi_T|_{L^2}^2 - \frac{\chi_0^2}{C_3} |\nabla \phi_\sigma|_{L^2}^2.
		\end{align*}
		Combining these two inequalities gives the following estimate on $\nabla \phi_T$:
		$$\frac{m_0 C_3^2}{16} |\nabla \phi_T|_{L^2}^2 \leq \frac{m_0}{4} |\nabla \mu|_{L^2}^2 + C|\phi_T|_{L^2}^2  + \frac{m_0 \chi_0^2}{4} |\nabla \phi_\sigma|_{L^2}^2.$$
		Adding this inequality to (\ref{Eq_NonLocalEnergy2}) results in
		\begin{equation}
		\begin{aligned}
		&C |\phi_T(s)|_{L^2}^2 + \frac{K}{2}|\phi_\sigma(s)|_{L^2}^2 + \frac12 |v(s)|_{L^2}^2  + \frac{m_0}{2} \|\nabla \mu\|_{L^2L^2}^2 + m_0 \|\textup{D} v\|_{L^2L^2}^2 + F_2 \|v\|_{L^4L^4}^4 \\&\quad\,+  \left( \frac{K m_0}{\delta_\sigma} - \frac{\chi_0^2 m_\infty^2}{m_0} - \frac{\chi_0 m_\infty}{2} - \frac{m_0 \chi_0^2}{4} \right) \|\nabla \phi_\sigma\|_{L^2L^2}^2 \\ &\quad \,+ \left( \frac{m_0 C_3^2}{16} - \frac{\chi_0 K^2 m_\infty}{2} \right) \|\nabla \phi_T\|_{L^2L^2}^2
		\\ &\leq C \left(1+\text{IC} + \|\phi_T\|_{L^2L^2}^2+\|\phi_\sigma\|_{L^2L^2}^2\right).
		\end{aligned}
		\end{equation}
		We choose $K$ large enough such that the prefactor of $\|\nabla \phi_\sigma\|_{L^2L^2}^2$ is strictly positive. As a consequence, the prefactor of $\|\nabla \phi_T\|_{L^2L^2}^2$ is also strictly positive due to the assumption on $C_3$, see (A6). Hence, by applying the Gronwall lemma, see Lemma \ref{Lem_Gronwall}, we can deduce the final energy estimate
		\begin{align*}
		& \|\phi_T\|_{L^\infty L^2}^2 + \|\phi_\sigma\|_{L^\infty L^2}^2 + \|v\|_{L^\infty L^2}^2   +  \|\nabla \mu\|_{L^2L^2}^2 \\ &\quad \,+ \|\textup{D} v\|_{L^2L^2}^2+ \|v\|_{L^4L^4}^4+ \|\nabla \phi_\sigma\|_{L^2L^2}^2 +\|\nabla \phi_T\|_{L^2L^2}^2
		\\&\leq C(1+\text{IC}) e^{CT}.
		\end{align*}
		In the same way as before, we can bound the time derivatives of $\phi_T,\phi_\sigma,v$ and conclude existence of strongly converging sequences in the discrete case. The only difference to the local model in the limit process is the integral term, but it can be treated immediately since the functional
		$$\phi_T \mapsto \int_0^T (\phi_T \cdot J*1 - J * \phi_T,w_j) \eta(t) \,\textup{d} t \leq 2  |\eta|_{\infty} |w_j|_{L^2} \|\phi_T\|_{L^2L^2} |J|_{L^1}$$
		is linear and continuous on $L^2(0,T;L^2)$.
	\end{proof}
	
	The first steps in deriving the energy estimate were the same as in the local case. The lack of regularity on $\phi_T$, which is caused by the new chemical potential, required us to derive additional estimates as a replacement. We achieved the regularity $\phi_T \in L^\infty(0,T;L^2) \cap L^2(0,T;H^1)$ instead of $L^\infty(0,T;H^1)$ as in the local case. Nonetheless, this regularity is enough to prove the existence of weak solutions. 
	
	The new assumptions in Theorem \ref{Thm_ExistenceNonLocal} were crucial for the proof to be completed. We assumed a lower bound assumption on $\Psi''$, directly involving the new constant $C_3$, which has to be sufficiently large. In the case of a sufficiently small chemotaxis constant $\chi_0$, the assumption on $C_3$ is fulfilled. Moreover, we introduce assumptions on properties for the new function $J$ in the chemical potential, which are fulfilled by a typical kernel function.
	
	\section{Sensitivity Analysis} \label{Section_Sens}
	The relative effects of model parameters in determining key quantities of interest (QoIs), such as the evolution of tumor mass over time, are very important in the development of predictive models of tumor growth. Accordingly, in this section we address the question of sensitivity of solutions of our system (\ref{Eq_Model}) to variations in model parameters, and we provide a sensitivity analyses using both statistical methods\cite{oden_2018,saltelli2000sensitivity,saltelli2008global,saltelli2010variance} and data-dependent methods based on the notion of active subspaces\cite{constantine2014active,constantine2015active}. We first introduce each method and then compare the sensitivities of the parameters 
	$$\theta = \left( \varepsilon_T, \chi_0, \delta_\sigma, \lambda_T, \lambda_\sigma, \lambda_A, M_T, M_\sigma, \overline{E}, \alpha, \nu, F_1, F_2 \right)^\top \in \mathbb{R}^{13}$$
	in our model for each method. 
	
	As the quantity of interest in the sensitivity analysis of both methods, we choose the volume of the tumor mass at different times $t\in \mathcal{I}$, i.e.
	$$Q(\theta) =\left[\frac{1}{|\Omega|} \int_\Omega \phi_T(t,x) \, \textup{d} x\right]_{t \in \mathcal{I}} \!\!\!\in \mathbb{R}^{\text{dim}(\mathcal{I})},$$
	which depends on the choice of the parameter setting $p$. Further, we choose the following uniformly distributed priors,
	\begin{equation} \begin{alignedat}{5} 
	\varepsilon_T &\sim \mathcal{U}(0.01,0.10), \quad &\lambda_T &\sim \mathcal{U}(0.01,1.00), \quad &M_T &\sim \mathcal{U}(0.10,1.00), \\
	\chi_0 &\sim \mathcal{U}(0.10,1.00),\quad &\lambda_\sigma &\sim \mathcal{U}(0.01,1.00),\quad &M_\sigma &\sim \mathcal{U}(0.10,1.00), \\
	\delta_\sigma &\sim \mathcal{U}(0.01,0.10),\quad  &\lambda_A &\sim \mathcal{U}(0.00,0.05),\quad &\overline{E} &\sim \mathcal{U}(0.25,1.00), \\
	\alpha &\sim \mathcal{U}(0.10,10.0), \quad &\nu &\sim \mathcal{U}(0.10,10.0), \quad &F_1,F_2 &\sim \mathcal{U}(0.10,10.0).
	\end{alignedat}
	\label{Eq_Priors}
	\end{equation}
	
	\subsection*{Variance-based method} 
	The statistical method of sensitivity analysis employed in this work is a
	variance-based method, developed by Ref.~\citen{sobol2001global}, and described in detail by Ref.~\citen{saltelli2008global}. The variance-based method takes into account uncertainties from the
	input factors, showing how the variance of the output is dependent on these uncertainties. The main drawback of this method is the high computational cost, which is $N(k + 2)$, where $N$ is the number of samples and $k$ is the number of parameters (here $k = 13$).
	
	The algorithm implementing the variance-based method consists of initially generating two matrices, $A$ and $B$, with size $N \times k$, given as:
	\begin{align*} A &= \begin{pmatrix} \theta_1^{(A1)} & \theta_2^{(A1)} & \cdots & \theta_i^{(A1)} & \cdots &\theta_k^{(A1)} \\
	\theta_1^{(A2)} & \theta_2^{(A2)} & \cdots & \theta_i^{(A2)} & \cdots& \theta_k^{(A2)} \\
	\vdots & \vdots & \ddots & \vdots & \ddots & \vdots \\
	\theta_1^{(AN)} & \theta_2^{(AN)} & \cdots & \theta_i^{(AN)} & \cdots& \theta_k^{(AN)} 
	\end{pmatrix}, \\[0.2cm]
	B &=  \begin{pmatrix} \theta_1^{(A1)} & \theta_2^{(B1)} & \cdots & \theta_i^{(B1)} & \cdots &\theta_k^{(B1)} \\
	\theta_1^{(B2)} & \theta_2^{(B2)} & \cdots & \theta_i^{(B2)} & \cdots& \theta_k^{(B2)} \\
	\vdots & \vdots & \ddots & \vdots & \ddots & \vdots \\
	\theta_1^{(BN)} & \theta_2^{(BN)} & \cdots & \theta_i^{(BN)} & \cdots& \theta_k^{(BN)}  \end{pmatrix} .
	\end{align*}
	
	Each row represents one set of values from the vector of parameters sampled from the priors given in (\ref{Eq_Priors}). The following step is to generate $k$ matrices $C_i$, where column $i$ comes from matrix $B$ and all other $k$ columns come from matrix $A$, such as:
	
	\begin{equation*}
	C_i = \begin{pmatrix} \theta_1^{(A1)} & \theta_2^{(A1)} & \cdots & \theta_i^{(B1)} & \cdots &\theta_k^{(A1)} \\
	\theta_1^{(A2)} & \theta_2^{(A2)} & \cdots & \theta_i^{(B2)} & \cdots& \theta_k^{(A2)} \\
	\vdots & \vdots & \ddots & \vdots & \ddots & \vdots \\
	\theta_1^{(AN)} & \theta_2^{(AN)} & \cdots & \theta_i^{(BN)} & \cdots& \theta_k^{(AN)}  \end{pmatrix} .
	\end{equation*}
	
	The output for all the sample matrices, that means $A$, $B$ and $C_i$, are computed, and stored as the vectors $Y_A$, $Y_B$ and $Y_{C_i}$. Each line of the vectors $Y_J$, $J \in \{A,B,C_i\}$, of
	size $N$ represents the QoI computed for each row of the matrix $J$.
	The last step from the variance-based method is to computed the first-
	order sensitivity indices, $S_i$, and the total effect indices, $S_{T_i}$. The first-order sensitivity indices are computed as
	\begin{equation*}
	S_i = \frac{Y_A \cdot Y_C - f_0^2}{Y_A \cdot Y_B - f_0^2},
	\end{equation*}
	where
	$$f_0^2=\left(\frac{1}{N} \sum_{n=1}^N Y_A^{(n)} \right)^2.$$
	These indices are always between $0$ and $1$. High values of $S_i$ indicate a sensitive parameter, and low $S_i$, for additive models, indicates a low-sensitive parameter. For non-additive models, the total effect indices take the first-order effects and the contribution of higher-order effects into account due
	to interactions between the model parameters. These indices are given as:
	$$S_{T_i}=1-\frac{Y_B\cdot Y_{C_i} - f_0^2}{Y_A\cdot Y_A -f_0^2}.$$
	
	According to Ref.~\citen{saltelli2008global}, for $\theta_i$ to be a non-influential parameter, it is necessary and sufficient that $S_{T_i} = 0$. If the total effect index from the $i$th parameter
	is close to zero, then the parameter can be fixed to any value within the
	uncertainty range without affecting the variance of the QoI\cite{saltelli2008global}.
	
	\subsection*{Active subspace method}
	The active subspace method is used for dimensional reduction of subspaces of probability distributions of model parameters and identifies the directions of the sensitive parameters\cite{constantine2014active,constantine2015active}. Let $\rho$ be the probability density function corresponding to the distribution of the parameters $\theta$ as chosen in (\ref{Eq_Priors}) and let $f$ be the data misfit function, which is given by 
	$$f: \mathbb{R}^{13} \to \mathbb{R}, \quad p \mapsto \frac{1}{2} \big\|\Gamma^{-1/2} \big(d - Q(p)\big)\big\|^2_2,$$
	where $d=Q(\theta)+\eta$ is some data for a zero-centered Gaussian noise $\eta \sim \mathcal{N}(0,\Gamma)$ with covariance $\Gamma$. Our data\cite{lima2018calibration} consists of in vitro data observed during the evolution of tumor cells with different initial tumor confluences without any treatment in a well with radius $0.32\,\text{cm}$ over the course of $21$ days, see Figure \ref{Figure_Data}. 
	
	\begin{figure}[H]
		\centering
		\begin{tikzpicture}
\begin{axis}[
    tick label style={/pgf/number format/fixed},
    width=12cm,
    height=8cm,
    xlabel={Days},
    ylabel={Confluence of the tumor mass},
    xmin=0, xmax=21,
    ymin=0, ymax=0.138,
    ytick={0,0.02,0.04,0.06,0.08,0.1,0.12},
    legend pos=north west,
    ymajorgrids=true,
    grid style=dashed,
    legend pos=south east,
    legend cell align={left}
]

\addlegendimage{empty legend}
\addlegendentry{\hspace{-.6cm}\textbf{Initial tumor confluence}}
 
\addplot
    coordinates {
                       (0,   0.005620000000000)
   (0.700000000000000,  0.009280000000000)
   (1.960000000000000,   0.006410000000000)
   (2.640000000000000,   0.013100000000000)
   (3.780000000000000,   0.014300000000000)
   (4.950000000000000,   0.020600000000000)
   (5.630000000000000,   0.028300000000000)
   (6.630000000000000,   0.040100000000000)
   (7.290000000000000,   0.065100000000000)
   (8.690000000000000,   0.062200000000000)
   (9.640000000000001,   0.085200000000000)
  (10.699999999999999,   0.100000000000000)
  (11.699999999999999,   0.095700000000000)
  (12.680000000000000,   0.094800000000000)
  (13.900000000000000,   0.103000000000000)
  (14.680000000000000,   0.099200000000000)
  (15.590000000000000,   0.099500000000000)
  (16.850000000000001,   0.108000000000000)
  (17.750000000000000,   0.108000000000000)
  (18.780000000000001,   0.104000000000000)
  (19.550000000000001,   0.101000000000000)
  (20.660000000000000,   0.115000000000000)
    };
\addlegendentry{0.00562}

\addplot
    coordinates {
   (                0,   0.008710000000000)
   (0.700000000000000,   0.013200000000000)
   (1.960000000000000,   0.010700000000000)
   (2.640000000000000,   0.020300000000000)
   (3.780000000000000,   0.025300000000000)
   (4.950000000000000,   0.042700000000000)
   (5.630000000000000,   0.060900000000000)
   (6.630000000000000,   0.063700000000000)
   (7.290000000000000,   0.081500000000000)
   (8.690000000000000,   0.103000000000000)
   (9.640000000000001,   0.103000000000000)
  (10.699999999999999,   0.119000000000000)
  (11.699999999999999,   0.108000000000000)
  (12.680000000000000,   0.110000000000000)
  (13.900000000000000,   0.118000000000000)
  (14.680000000000000,   0.099400000000000)
  (15.590000000000000,   0.114000000000000)
  (16.850000000000001,   0.103000000000000)
  (17.750000000000000,   0.108000000000000)
  (18.780000000000001,   0.111000000000000)
  (19.550000000000001,   0.110000000000000)
  (20.660000000000000,   0.113000000000000)
    };
\addlegendentry{0.00871}
    
\addplot+[color=black!40!green, mark=diamond*, mark options={scale=1.5,fill=black!50!green}]
    coordinates {
   (                0,   0.01410000000000)
   (0.700000000000000,   0.030900000000000)
   (1.960000000000000,   0.022000000000000)
   (2.640000000000000,   0.040300000000000)
   (3.780000000000000,   0.039500000000000)
   (4.950000000000000,   0.060000000000000)
   (5.630000000000000,   0.094000000000000)
   (6.630000000000000,   0.10700000000000)
   (7.290000000000000,   0.098100000000000)
   (8.690000000000000,   0.131000000000000)
   (9.640000000000001,   0.121000000000000)
  (10.699999999999999,   0.121000000000000)
  (11.699999999999999,   0.106000000000000)
  (12.680000000000000,   0.111000000000000)
  (13.900000000000000,   0.116000000000000)
  (14.680000000000000,   0.098300000000000)
  (15.590000000000000,   0.118000000000000)
  (16.850000000000001,   0.102000000000000)
  (17.750000000000000,   0.105000000000000)
  (18.780000000000001,   0.116000000000000)
  (19.550000000000001,   0.109000000000000)
  (20.660000000000000,   0.101000000000000)
    };
\addlegendentry{0.01410}
 
\end{axis}
\end{tikzpicture}
		\caption{Data of the evolution of the tumor confluence over the duration of 21 days for three different initial tumor confluences}
		\label{Figure_Data}
	\end{figure}
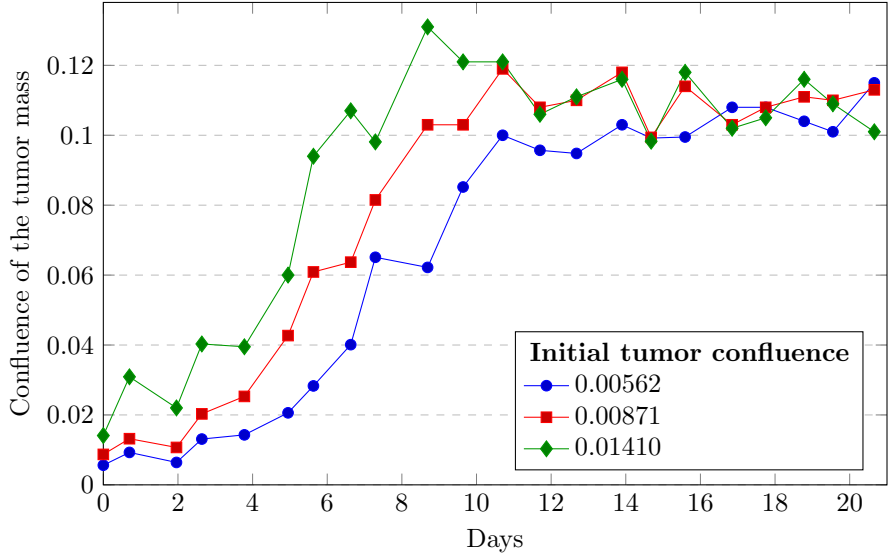

We selected initial tumor cells corresponding to confluences of $0.00562$, $0.00871$ and $0.01410$. For example, an initial confluence of $0.00562$ corresponds to an initial tumor volume of $A_{\text{init}}=0.00562 \cdot \pi \cdot 0.32^2\, \text{cm}^2 \approx 0.00181 \, \text{cm}^2$ with radius  $r_{\text{init}}=\sqrt{0.00562 \cdot 0.32^2} \text{cm} \approx 0.0240 \,\text{cm}$.

	The covariance matrix $V$ of $f$ is given by the integral of the outer product of the gradient of $f$ weighted with $\rho$,
	$$V = \int_{\mathbb{R}^{13}} \nabla f(x) (\nabla f(x))^\top \rho(x) \, \textup{d} x.$$
	In our algorithm we approximate the covariance matrix $V$ by
	$$V \approx \frac{1}{N} \sum_{j=1}^N \nabla f(X_j) (\nabla f(X_j))^\top, \quad X_j \sim \rho,$$
	whose eigenvalues are close to the true eigenvalues of $V$ for a sufficiently large $N$, see Refs.~\citen{constantine2014active,constantine2015active}.

	\noindent We consider an orthogonal eigendecomposition of $V$; that means for the eigenvalue matrix $\Lambda = \text{diag}(\lambda_1,\dots,\lambda_{13})$ with descending eigenvalues and the corresponding eigenvector matrix $W=[w_1 \cdot \cdot \cdot w_{13}]$ we have
	$$V = W \Lambda W^\top.$$
	Since $V$ is a symmetric and positive semi-definite matrix, its eigenvectors can be chosen to form an orthonormal basis in $\mathbb{R}^n$ and its eigenvalues are non-negative. In particular, the $i$-th eigenvalue is given by
	$$\lambda_i = w_i^\top V w_i = \int_{\mathbb{R}^{13}} \big(w_i^\top \nabla f(x)\big)^2 \rho(x) \, \textup{d} x,$$
	from which we can see that it reflects the sensitivity of $f$ in the direction of the $i$-th eigenvector. In other words, on average $f$ changes a little in the direction of an eigenvector with a small corresponding eigenvalue, and $f$ may change significantly in the direction of an eigenvector with a large corresponding eigenvalue. But this is not enough to identify the sensitivity of each parameter. Following Ref.~\citen{constantine2017global}, we define the activity score $\alpha_i$ for the $i$-th parameter $p_i$ as
	$$\alpha_i = \sum_{i=1}^{13} \lambda_j w_{i,j}^2, \quad i=1,\dots,13,$$
	and use the resulting number to rank the importance of each parameter. As discussed in Ref.~\citen{constantine2017global} this metric has fared well in comparison to other standard sensitivity metrics such as the Sobol' sensitivity index\cite{sobol2001global} when adequate data is available. 
	
	\subsection*{Comparison of the sensitivity methods} In Figure \ref{Figure_BarPlot1} we list the relative sensitivity of each parameter for each method. We see that the proliferation rate $\lambda_T$ is highly sensitive for both methods. For the active subspace method the effects of other parameters besides $\lambda_T$ and $\lambda_A$ are nearly zero for this choice of QoI. Therefore, to notice the difference, we also listed the comparison of the sensitives in Figure \ref{Figure_BarPlot2} with a logarithmic scale.

	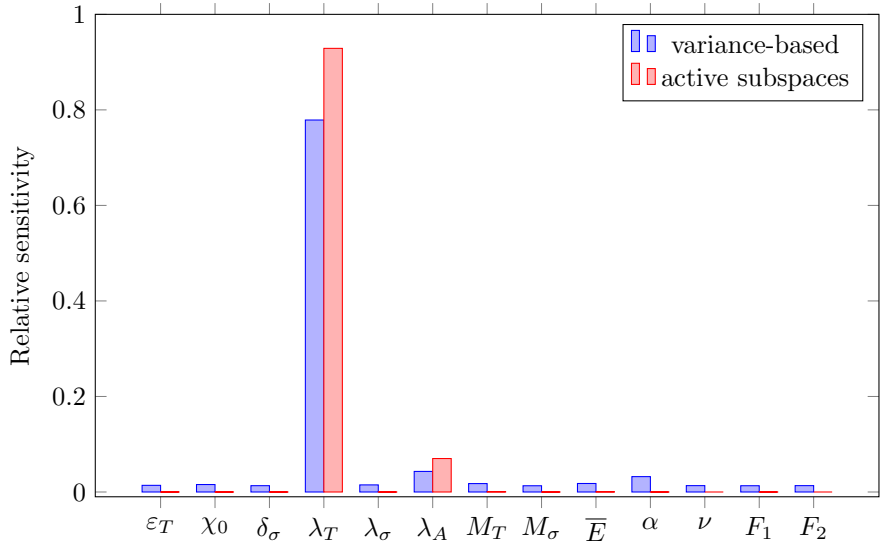
\begin{figure}[H]
		\centering
	\begin{tikzpicture} 
\begin{axis}[ 
        width=12cm,
        height=8cm,
        ybar=0pt, 
        ymin=-0.01,ymax=1,
        legend style={}, 
        ylabel={Relative sensitivity}, 
        symbolic x coords={$\varepsilon_T$, $\chi_0$, $\delta_\sigma$, $\lambda_T$, $\lambda_\sigma$, $\lambda_A$, $M_T$, $M_\sigma$, $\overline{E}$, $\alpha$, $\nu$, $F_1$, $F_2$}, 
        xtick=data, 
        bar width=7pt,
        ] 
\addplot coordinates {
        ($M_T$,0.01761807)
        ($M_\sigma$,0.01306688)
        ($\lambda_T$,0.77878138)
        ($\lambda_A$,0.04308982)
        ($\lambda_\sigma$,0.01486681)
        ($\overline{E}$,0.0178098)
        ($\varepsilon_T$,0.01393578)
        ($\chi_0$,0.01561902)
        ($\delta_\sigma$,0.01323875)
        ($\alpha$,0.03218827)
        ($\nu$,0.01332796)
        ($F_1$,0.01313001)
        ($F_2$,0.01332745)}; 
\addplot coordinates {
        ($\varepsilon_T$,6.40499196e-05) 
        ($\chi_0$, 4.31502163e-06)
        ($\delta_\sigma$,1.00033517e-06)
        ($\lambda_T$,9.28802174e-01)
        ($\lambda_\sigma$,2.52999623e-06)
        ($\lambda_A$,7.00567981e-02)
        ($M_T$,5.80711803e-04) 
        ($M_\sigma$,5.58987791e-07)
        ($\overline{E}$,4.81583383e-04) 
        ($\alpha$,6.27820362e-06)
        ($\nu$,3.35086341e-09)
        ($F_1$,3.68741255e-06)
        ($F_2$,4.53701572e-08)}; 
\legend{variance-based, active subspaces} 
\end{axis} 
\end{tikzpicture}
	\caption{Comparison of the relative sensitivities for the variance-based and the active subspaces method; linear scale} 
	\label{Figure_BarPlot1}
\end{figure}
	\begin{figure}[H]
		\centering
	\begin{tikzpicture} 
\begin{axis}[ 
        width=12cm,
        height=8cm,
        ybar=0pt, 
        ymode=log,
        log origin=infty,
        legend style ={},
        ylabel={Relative sensitivity}, 
        symbolic x coords={$\varepsilon_T$, $\chi_0$, $\delta_\sigma$, $\lambda_T$, $\lambda_\sigma$, $\lambda_A$, $M_T$, $M_\sigma$, $\overline{E}$, $\alpha$, $\nu$, $F_1$, $F_2$}, 
        xtick=data,
        bar width=7pt,
        ] 
\addplot coordinates {
        ($M_T$,0.01761807)
        ($M_\sigma$,0.01306688)
        ($\lambda_T$,0.77878138)
        ($\lambda_A$,0.04308982)
        ($\lambda_\sigma$,0.01486681)
        ($\overline{E}$,0.0178098)
        ($\varepsilon_T$,0.01393578)
        ($\chi_0$,0.01561902)
        ($\delta_\sigma$,0.01323875)
        ($\alpha$,0.03218827)
        ($\nu$,0.01332796)
        ($F_1$,0.01313001)
        ($F_2$,0.01332745)};   
\addplot coordinates {
        ($\varepsilon_T$,6.40499196e-05) 
        ($\chi_0$, 4.31502163e-06)
        ($\delta_\sigma$,1.00033517e-06)
        ($\lambda_T$,9.28802174e-01)
        ($\lambda_\sigma$,2.52999623e-06)
        ($\lambda_A$,7.00567981e-02)
        ($M_T$,5.80711803e-04) 
        ($M_\sigma$,5.58987791e-07)
        ($\overline{E}$,4.81583383e-04) 
        ($\alpha$,6.27820362e-06)
        ($\nu$,3.35086341e-09)
        ($F_1$,3.68741255e-06)
        ($F_2$,4.53701572e-08)}; 
\legend{variance-based, active subspaces} 
\end{axis} 
\end{tikzpicture} 
	\caption{Comparison of the relative sensitivities for the variance-based and the active subspaces method; logarithmic scale} 
	\label{Figure_BarPlot2}
\end{figure}
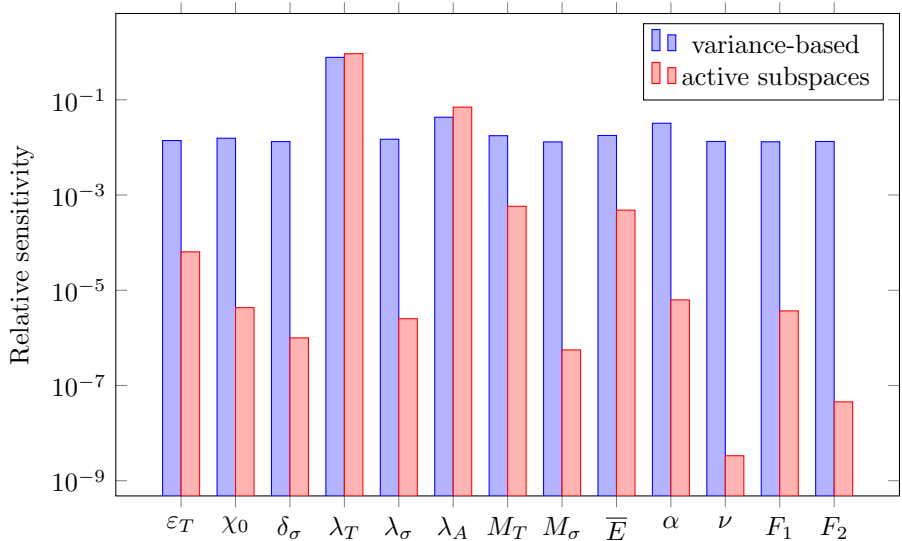
\vspace{-0.2cm}

		We remark that the dominance of the proliferation rate $\lambda_T$ in the results shown is largely due to the choice of the QoI; other choices favor different parameters of the model. For example, numerical simulations described in the next section suggest that the velocity parameters play an important role in determine the evolution of tumor shape and, for example, surface area.

	\section{Numerical Discretization and Examples} \label{Section_Numerical}
	We choose a similar computational framework as in Refs.~\citen{lima2014hybrid,lima2015analysis} to solve the deterministic system (\ref{Eq_Model}) with the initial and boundary data (\ref{Eq_InitialBoundary}). This framework includes a discrete-time local semi-implicit scheme with an energy convex-nonconvex splitting. Here, the stable contractive part is treated implicitly and the expansive part explicitly. In particular, introducing the Ginzburg--Landau energy\cite{gurtin1996generalized}
	$$E=\int_\Omega \Psi(\phi_T)+\frac{\varepsilon_T^2}{2} |\nabla \phi_T|^2 + \frac{1}{2\delta_\sigma} \phi_\sigma^2 -\chi_0 \phi_T\phi_\sigma \, \textup{d} x$$
	we can rewrite the chemical potential in the following way,
	$$\mu =  \frac{\delta E}{\delta \phi_T},$$
	where $\frac{\delta E}{\delta \phi_T}$ is the first variation of $E$ with respect to $\phi_T$. We split the energy in its contractive and expansive parts via $E=E_c-E_e$. 
	
	Let the time domain be divided into the steps $\Delta t_n = t_{n+1} - t_n$ for $n \in \{0,1,\dots\}$. We assume $\Delta t_n = \Delta t$ for all $n$. We write $\phi_{T_n}$ for the approximation of $\phi_T^h(t_n)$ and likewise for the other variables. The backward Euler method applied to the system (\ref{Eq_Model}) reads
	\begin{equation}
	\begin{aligned}
	\frac{\phi_{T_{n+1}} - \phi_{T_n}}{\Delta t}+\div(\phi_{T_{n+1}} v_{n+1}) &= \div \! \big(m_T(\phi_{T_{n+1}},\phi_{\sigma_{n+1}}) \nabla \mu_{n+1} \big) - \lambda_A \phi_{T_{n+1}}
	\\[-0.1cm] &\quad\,+ \lambda_T \phi_{\sigma_{n+1}} \mathcal{C}\big(\phi_{T_{n+1}}(1-\phi_{T_{n+1}})\big), \\[0.1cm]
	\mu_{n+1} &= D_{\phi_T} E_c(\phi_{T_{n+1}},\phi_{\sigma_{n+1}})-D_{\phi_T} E_e (\phi_{T_{n}},\phi_{\sigma_{n}}), \\
	\frac{\phi_{\sigma_{n+1}} - \phi_{\sigma_n}}{\Delta t} +\div( \phi_{\sigma_{n+1}}  v_{n+1})&= \div\! \big(m_\sigma(\phi_{T_{n+1}},\phi_{\sigma_{n+1}}) (\delta_\sigma^{-1} \nabla \phi_{\sigma_{n+1}} - \chi_0 \nabla \phi_{T_{n+1}}) \big) \\ &\quad\, -\lambda_\sigma \phi_{\sigma_{n+1}} \mathcal{C}(\phi_{T_{n+1}}), \\   
	\frac{v_{n+1}-v_n}{\Delta t} + \alpha v_{n+1} +  \nabla p_{n+1}&= \div \!\big(\nu(\phi_{T_{n+1}},\phi_{\sigma_{n+1}}) \nabla v_{n+1}\big) -F_1 |v_{n+1}|v_{n+1} - F_2 |v_{n+1}|^2 v_{n+1}  \\[-0.1cm] &\quad\, + (\mu_{n+1} + \chi_0 \phi_{\sigma_{n+1}} ) \phi_{T_{n+1}}, \\[0.1cm]
	\div v_{n+1} &=0,
	\end{aligned}
	\label{Eq_Numerics}
	\end{equation}
	where $\mathcal{C}(\phi_{T_{n+1}}) = \max\!\big(0,\min(1,\phi_{T_{n+1}})\big)$ is the cut-off operator.
	
	We uncouple the equations and use an iterative Gau\ss--Seidel method for solving each equation. In Algorithm \ref{Alg_Alg} below, the subscript $0$ stands for the initial solution, $k$ the iteration index, $n_{\text{iter}}$ the maximum number of iterations at each time step and TOL the tolerance for the iteration process.
	In each iterative loop three linear systems are solved and the convergence of the nonlinear solution is achieved at each time if $\max |\phi_{T_{n+1}}^{k+1}-\phi_{T_{n+1}}^{k}| < \text{TOL}$. We obtain the algebraic systems using a Galerkin finite element approach. In this regard, let $\mathcal{T}^h$ be a quasiuniform family of triangulations of $\Omega$ and let the piecewise linear finite element space be given by
	$$\mathcal{V}^h = \{ v \in H^1(\overline{\Omega}) : v|_{T} \in P_1(T) \text{ for all } T \in \mathcal{T}^h \} \subset H^1(\Omega),$$
	where $P_1(T)$ denotes the set of all affine linear function on $T$. Moreover, we introduce the piecewise linear finite element space with homogeneous Dirichlet boundary
	$$\mathcal{V}^h_0 = \{ v \in \mathcal{V}^h : v = 0 \text{ on }  \partial \Omega \},$$
	and for the divergence-free space $V$ we consider the Brezzi-Douglas-Marini (BDM) space of order 1, see Ref.~\citen{boffi2013mixed}. In Ref.~\citen{pan2012mixed} it was shown that the mixed finite element space $\text{BDM}_1$--$\text{DG}_0$ is stable for the mixed formulation of the Darcy--Forchheimer equation. 
	
	We formulate the discrete problem as follows: for each $k$, find $$\phi_{T_{n+1}}^{k+1} \in \mathcal{V}^h,\quad  \mu_{n+1}^{k+1} \in \mathcal{V}^h,\quad  \phi_{\sigma_{n+1}}^{k+1} \in 1+\mathcal{V}_0^h,\quad v^{k+1}_{n+1} \in \text{BDM}_1,\quad p^{k+1}_{n+1}\in \text{DG}_0,$$ 
	for all 
	$$\varphi_T\in \mathcal{V}^h,\quad \varphi_\mu \in \mathcal{V}^h,\quad \varphi_\sigma \in \mathcal{V}^h_0,\quad \varphi_v \in \text{BDM}_1,\quad \varphi_p \in \text{DG}_0,$$ such that:
	\begin{equation} \begin{aligned}
	(v_{n+1}^{k+1}-v_n,\varphi_v) &+ \Delta t \alpha ( v_{n+1}^{k+1},\varphi_v) 
	\\&+ \Delta t \big(\nu(\phi_{T_{n+1}}^k,\phi^k_{\sigma_{n+1}}) \nabla v_{n+1}^{k+1},\nabla \varphi_v\big) \\&+\Delta t F_1 (|v_{n+1}^{k+1} |v_{n+1}^{k+1},\varphi_v) \\&+ \Delta t F_2 (|v_{n+1}^{k+1}|^2 v_{n+1}^{k+1},\varphi_v) \\& - \Delta t ( p_{n+1}^{k+1},\div \varphi_v) \\ &= - \Delta t\big( (\mu_{n+1}^k + \chi_0 \phi^k_{\sigma_{n+1}} ) \phi^k_{T_{n+1}},\varphi_v\big);
	\label{Alg_v}
	\end{aligned}
	\end{equation}
	\begin{equation} \begin{aligned}
	(v_{n+1}^{k+1},\nabla \varphi_p) &=0;
	\label{Alg_p}
	\end{aligned}\end{equation}
	\begin{equation} \begin{aligned}
	(\phi_{\sigma_{n+1}}^{k+1} - \phi_{\sigma_n},\varphi_\sigma) &- \Delta t  (\phi_{\sigma_{n+1}}^{k+1} v_{n+1}^{k+1},\nabla \varphi_\sigma) \\&+\Delta t\big(m_\sigma(\phi_{T_{n+1}}^k,\phi_{\sigma_{n+1}}^{k+1}) \cdot (\delta_\sigma^{-1} \nabla \phi_{\sigma_{n+1}}^{k+1} - \chi_0 \nabla \phi^k_{T_{n+1}}),\nabla \varphi_\sigma\big) 
	\\&+ \Delta t \lambda_\sigma \big(\phi^{k+1}_{\sigma_{n+1}} \mathcal{C}(\phi^k_{T_{n+1}}) ,\varphi_\sigma\big) = 0;
	\label{Alg_sigma}
	\end{aligned}
	\end{equation}
		\begin{equation} \begin{aligned}
	(\phi_{T_{n+1}}^{k+1} - \phi_{T_n},\varphi_T) &- \Delta  t (\phi_{T_{n+1}}^{k+1} v_{n+1}^{k+1},\nabla \varphi_T)
	\\&+\Delta t  \big(m_T(\phi_{T_{n+1}}^{k+1},\phi^{k+1}_{\sigma_{n+1}}) \nabla \mu_{n+1}^{k+1},\nabla \varphi_T\big) \\&-\Delta t \lambda_T \big(\phi^{k+1}_{\sigma_{n+1}} \mathcal{C}\big(\phi^{k+1}_{T_{n+1}}(1-\phi^{k+1}_{T_{n+1}})\big),\varphi_T\big)=0;
	\label{Alg_T}
	\end{aligned}
	\end{equation}
	\begin{equation} \begin{aligned}
	(\mu_{n+1}^{k+1},\varphi_\mu) - \big(D_{\phi_T} E_c(\phi^{k+1}_{T_{n+1}},\phi^{k+1}_{\sigma_{n+1}}),\varphi_\mu\big)=\big(D_{\phi_T} E_e (\phi_{T_{n}},\phi_{\sigma_{n}}),\varphi_\mu\big).
	\label{Alg_mu}
	\end{aligned}
	\end{equation}
	
	\begin{algorithm}[H]
		\SetAlgoLined
		\caption{Semi-implicit scheme for (\ref{Eq_Numerics})} \label{Alg_Alg} \vspace{2mm}
		\textbf{Input}: $\phi_{T_0}, \phi_{\sigma_0}, v_0, \Delta t, T, \text{TOL}$\\[2mm]
		\textbf{Output}: $\phi_{T_n}, \mu_n, \phi_{\sigma_n}, v_n$ for all $n$ \\[2mm]
		$t=0$, $n=0$\vspace{2mm}\\
		\While{$t\leq T$ \vspace{1mm}}{ 
			$\phi^0_{T_{n+1}} = \phi_{T_n}$ \\[2mm]
			\While{$\displaystyle \textup{max} \| \phi_{T_{n+1}}^{k+1} - \phi_{T_{n+1}}^k\| > \textup{TOL}$ \vspace{1mm}}{
				$\phi_{T_{n+1}}^k = \phi_{T_{n+1}}^{k-1}$ \\[2mm]
				\textbf{solve \bm{$v_{n+1}^{k+1}$}, \bm{$p_{n+1}^{k+1}$} using \eqref{Alg_v} and \eqref{Alg_p}, given} $\phi_{\sigma_n}, \phi_{T_{n+1}}^k, \mu_{n+1}^k, \phi_{\sigma_{n+1}}^k $\\[2mm]
				\textbf{solve \bm{$\phi_{\sigma_{n+1}}^{k+1}$} using \eqref{Alg_sigma}, given} $\phi_{\sigma_n}, \phi_{T_{n+1}}^k, \bm{v_{n+1}^{k+1}}$\\[2mm]
				\textbf{solve \bm{$\phi_{T_{n+1}}^{k+1}$}, \bm{$\mu_{n+1}^{k+1}$} using \eqref{Alg_T} and \eqref{Alg_mu}, given} $\phi_{T_n}, \phi_{T_{n+1}}^k, \bm{\phi_{\sigma_{n+1}}^{k+1}}, \bm{v_{n+1}^{k+1}}$ \\[2mm]
				$k \mapsto k+1$\\} 
			$\phi_{T_{n+1}}= \phi_{T_{n+1}}^{k+1}$ \\[2mm]
			$\mu_{n+1} = \mu_{n+1}^{k+1}$ \\[2mm]
			$\phi_{\sigma_{n+1}} = \phi_{\sigma_{n+1}}^{k+1}$ \\[2mm]
			$v_{n+1} = v_{n+1}^{k+1}$ \\[2mm]
			$p_{n+1} = p_{n+1}^{k+1}$ \\[2mm]
			$n \mapsto n+1$, $\, t \mapsto t+\Delta t$} 
	\end{algorithm}
	
	We implemented Algorithm \ref{Alg_Alg} in FEniCS\cite{logg2012automated}, an open-source computing platform for solving partial differential equations using finite element methods. We use this implementation to obtain the numerical results below. 
	
	\subsection*{Spherically symmetric case}
	For specificity and demonstration purposes, we assume a spherically symmetric tumor volume and use polar coordinates to transform our model (\ref{Eq_Model}) to a system of equations depending solely on time $t$ and radius $r$. We consider the domain $R=[0,0.32]$ imitating our data setting as described in Section \ref{Section_Sens}. We choose parameters matching the priors we used in (\ref{Eq_Priors}); in particular we choose the dimensionless values
	\begin{equation} \begin{alignedat}{7} 
	\varepsilon_T &=0.01,          \quad &\lambda_T &=1.0,    \quad &M_T &=1.0,    \quad &\alpha &=1.0, \\
	\chi_0 &=0.5,   \quad &\lambda_\sigma &=1.0, \quad &M_\sigma &=1.0,               \quad &\nu &=10.0, \\
	\delta_\sigma &=0.05,    \quad  &\lambda_A &=0.01,  \quad &\overline{E} &=0.25,              \quad &F_1,F_2 &=10.0.
	\end{alignedat}
	\label{Eq_ParametersSimulation}
	\end{equation}
	
	We consider a smooth approximation of the Heaviside function matching the initial tumor confluence $0.00562$ of our data setting, as it can be seen in Figure \ref{Figure_Simulation1DTumor}\,(a) below. The approximation is given by
	$$\phi_T(0,r)=\frac{1}{1+\exp(M(r-r_{\text{init}})) },$$
	where a larger $M>0$ is increasing the steepening of the function around zero, and  $r_{\text{init}}=0.32\sqrt{0.00562}$ again represents the radius of the initial tumor cell.
	
	The simulation of the evolution of the tumor confluence is depicted in Figure \ref{Figure_Simulation1DConfluence} below. We observe that the confluence grows continuously in time with an increasing rate.
			\begin{figure}[H]
		\centering
		\input{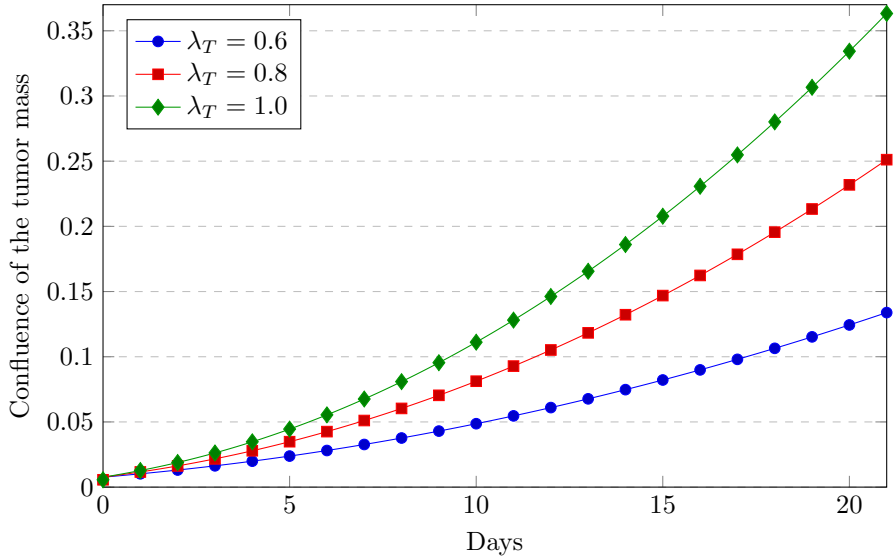} \\[-0.1cm]
		\caption{Simulation of the tumor confluence for three different proliferation rates $\lambda_T$ over the duration of 21 days}
		\label{Figure_Simulation1DConfluence}
	\end{figure}
	
	In Figure \ref{Figure_Simulation1DTumor}, the simulation of the tumor cell $\phi_T$ is shown at different time spots. First, the initial tumor cell is illustrated, then after 7, 14 and 21 days. We plot three different curves for different proliferation rates, 0.6, 0.8, and 1.0. We observe that a higher proliferation rate is increasing the expansion of the tumor cell volume fraction $\phi_T$, and the tumor cell is continuously growing over time.
		\begin{figure}[H]
		\centering
		\input{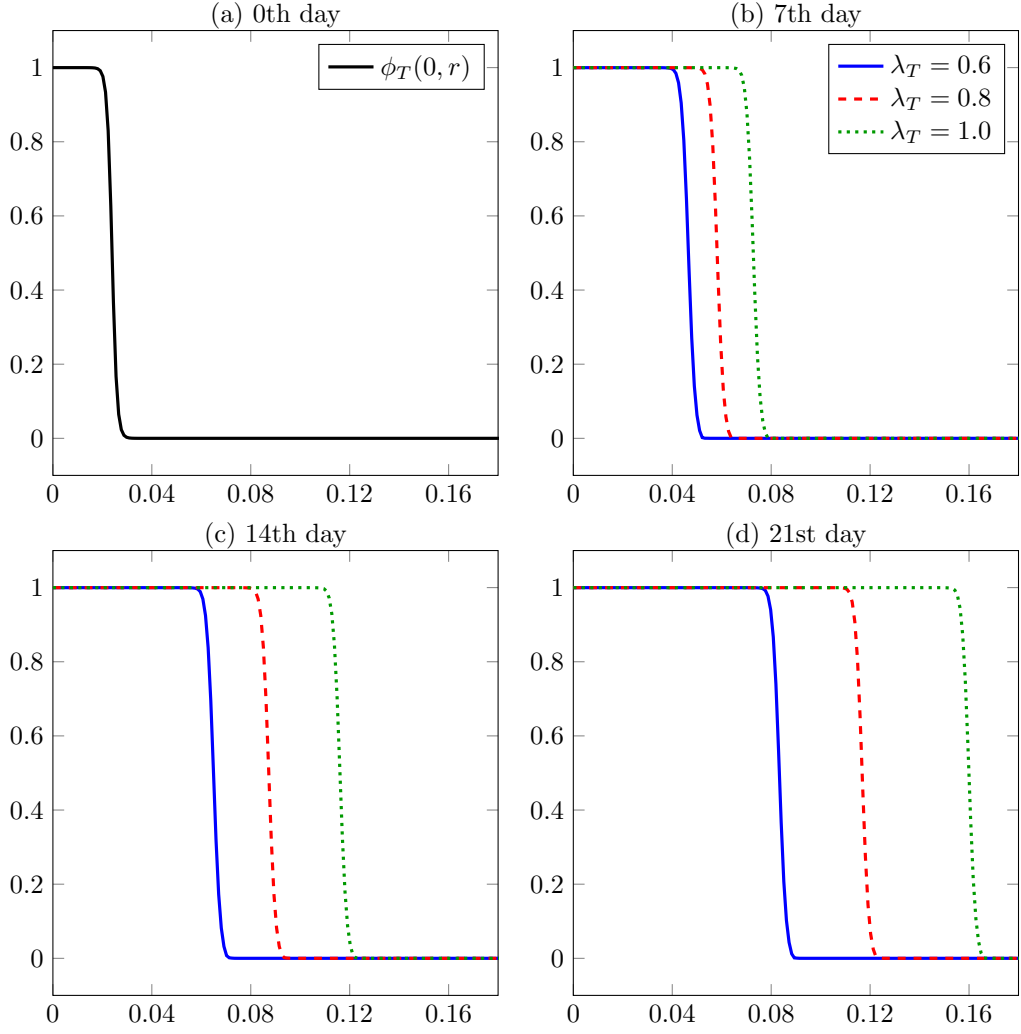} \\[-0.1cm]
		\caption{Simulation of the evolution of the tumor cell volume fraction $\phi_T$ over the duration of 21 days for three different proliferation rates}
		\label{Figure_Simulation1DTumor}
	\end{figure}

	\subsection*{Two-dimensional case}
	We simulate the tumor growth on the circular domain $$\Omega=\{x \in \mathbb{R}^2: x_1^2+x_2^2 = 0.32^2\}$$ with the same parameters as chosen in the one-dimensional setting; see (\ref{Eq_ParametersSimulation}). For the initial tumor volume we select the following four possibilities, which are depicted in Figure \ref{Figure_Initial2D}:
	
	(a) $\phi_{T}(0,x) = \begin{cases} 1 , &\text{if }~  0.9 \cdot x_1^2 + x_2^2 \leq  r_\text{init}^2,\\0, &\text{else}, \end{cases} $ \\[0.1cm]
	
	(b) $\phi_{T}(0,x) = \begin{cases} 1 , &\text{if }~  0.15 \cdot x_1^2 + x_2^2 \leq  r_\text{init}^2,\\0, &\text{else}, \end{cases} $ \\[0.1cm]
	
	(c) $\phi_{T}(0,x) = \begin{cases} 1 , &\text{if }~  0.9 \cdot (x_1\pm 0.05)^2 + x_2^2 \leq  r_\text{init}^2,\\0, &\text{else}, \end{cases} $ \\[0.1cm]
	
	(d) $\phi_{T}(0,x) = \begin{cases} 1 , &\text{if }~  (\sin(7.2x_1+5.6x_2)+1) \cdot (4x_1-0.2)^2 + (\sin(8x_1)+1) \cdot 64 x_2^2 \leq  1,\\0, &\text{else}. \end{cases}$
	
	\begin{figure}[H]
		\centering
		\begin{tikzpicture}
		\node [draw,circle, minimum width=.23\textwidth,
		path picture = {
			\node at (path picture bounding box.center) { \includegraphics[width=.7\textwidth]{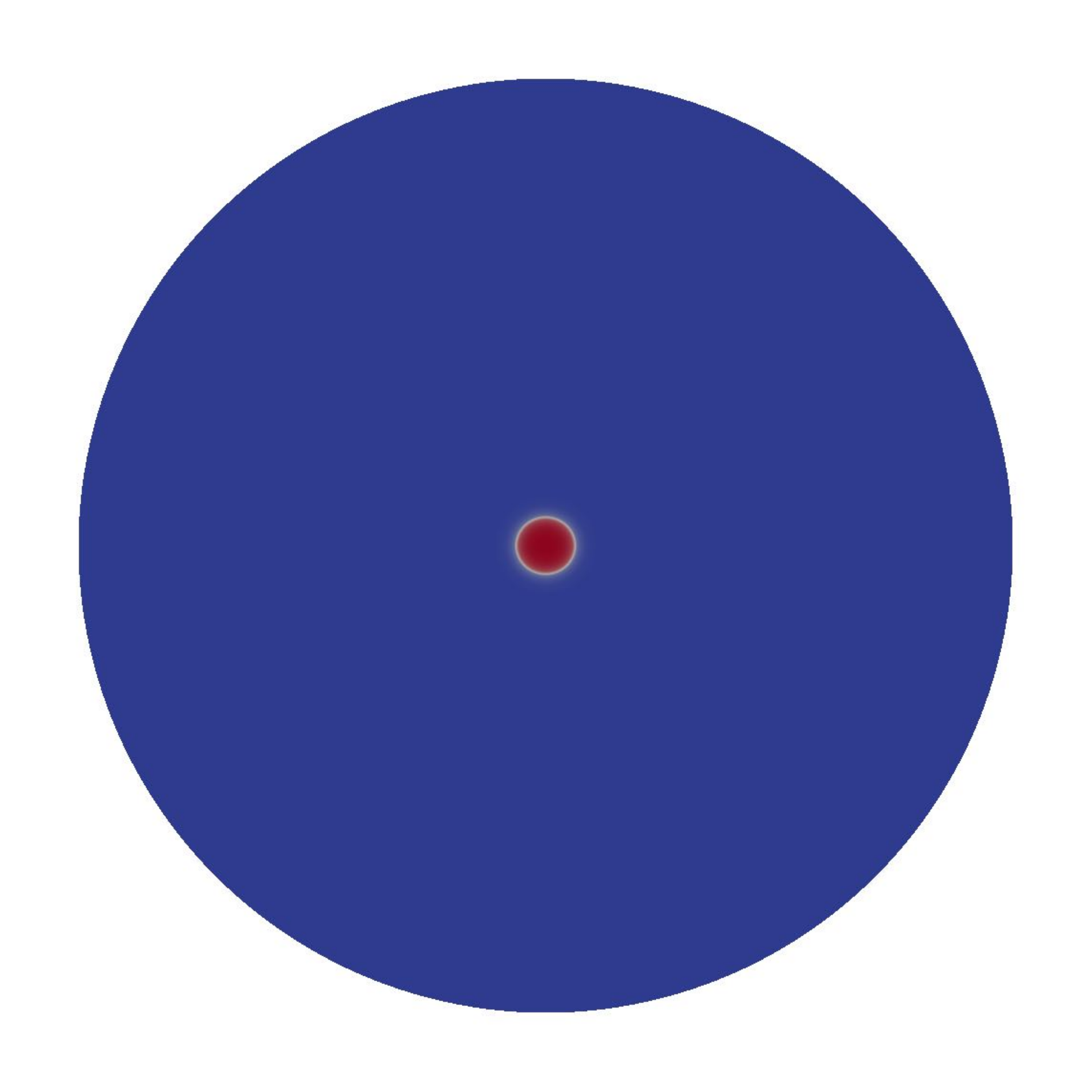}};}] {};
		\node at (-1.6,1.6) {(a)};
		\end{tikzpicture}
		\begin{tikzpicture}
		\node [draw,circle, minimum width=.23\textwidth,
		path picture = {
			\node at (path picture bounding box.center) {\includegraphics[width=.7\textwidth]{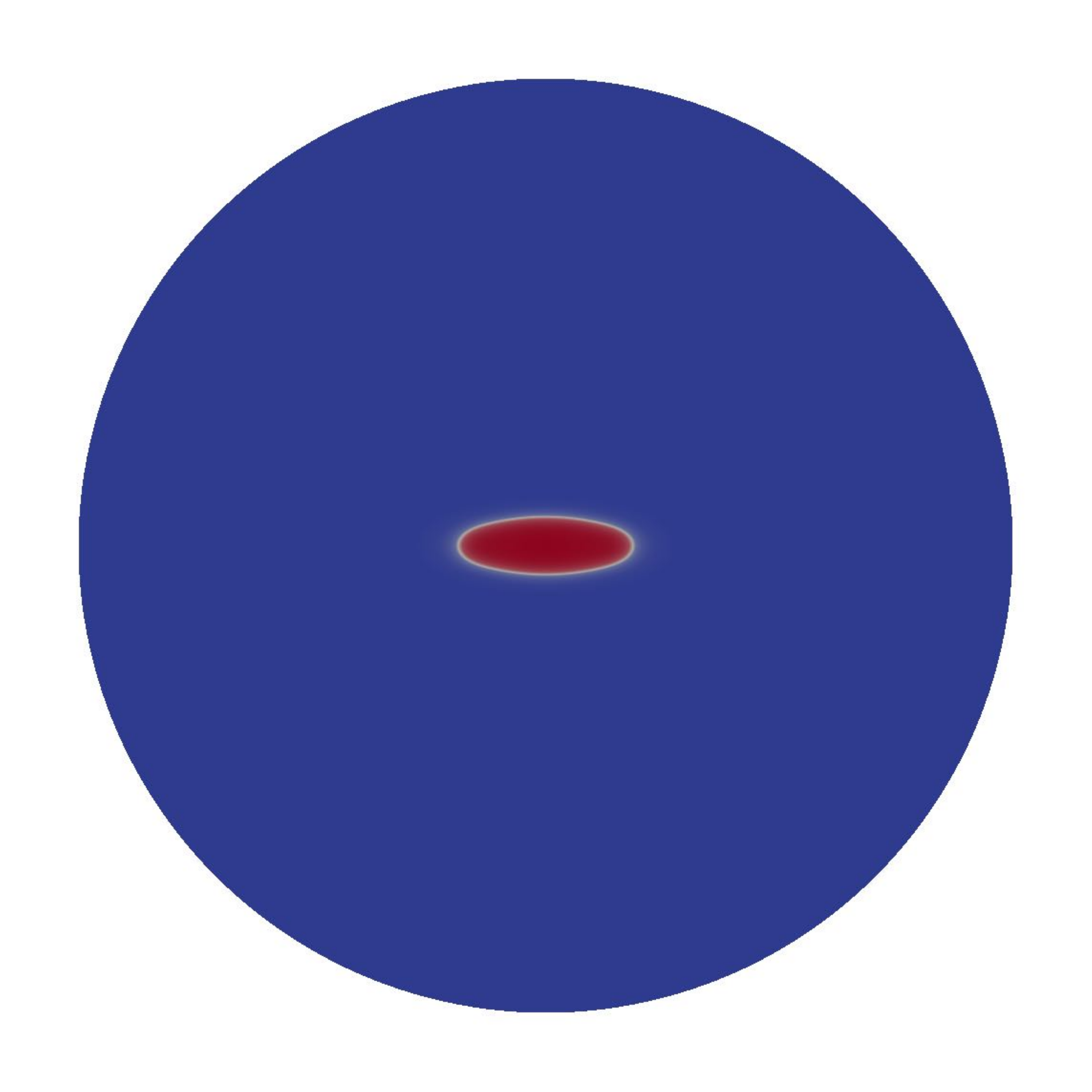}};}] {};
		\node at (-1.6,1.6) {(b)};
		\end{tikzpicture}
		\begin{tikzpicture}
		\node [draw,circle, minimum width=.23\textwidth,
		path picture = {
			\node at (path picture bounding box.center) { \includegraphics[width=.7\textwidth]{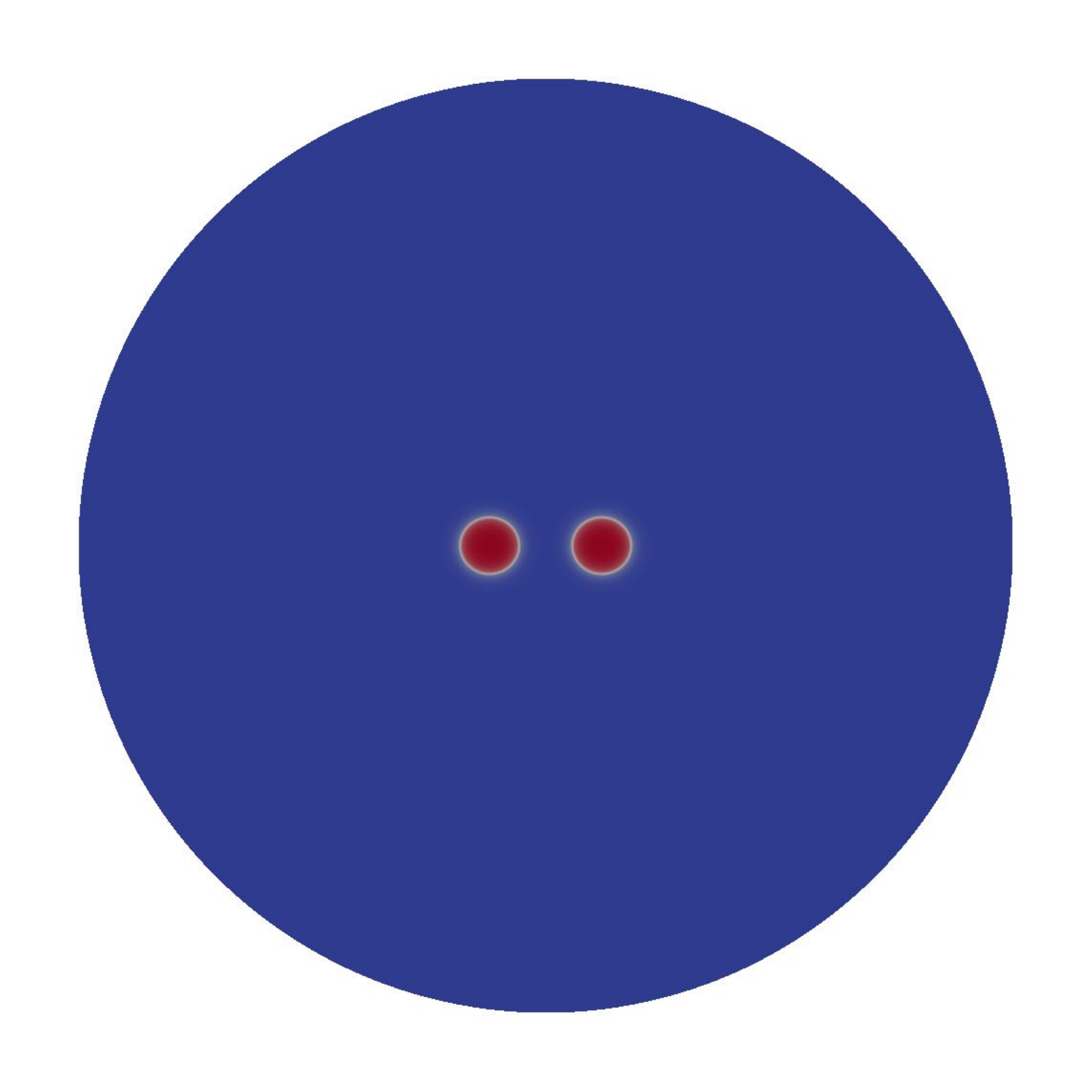}};}] {};
		\node at (-1.6,1.6) {(c)};
		\end{tikzpicture}
		\begin{tikzpicture}
		\node [draw,circle, minimum width=.23\textwidth,
		path picture = {
			\node at (.25,0) { \includegraphics[width=.45\textwidth]{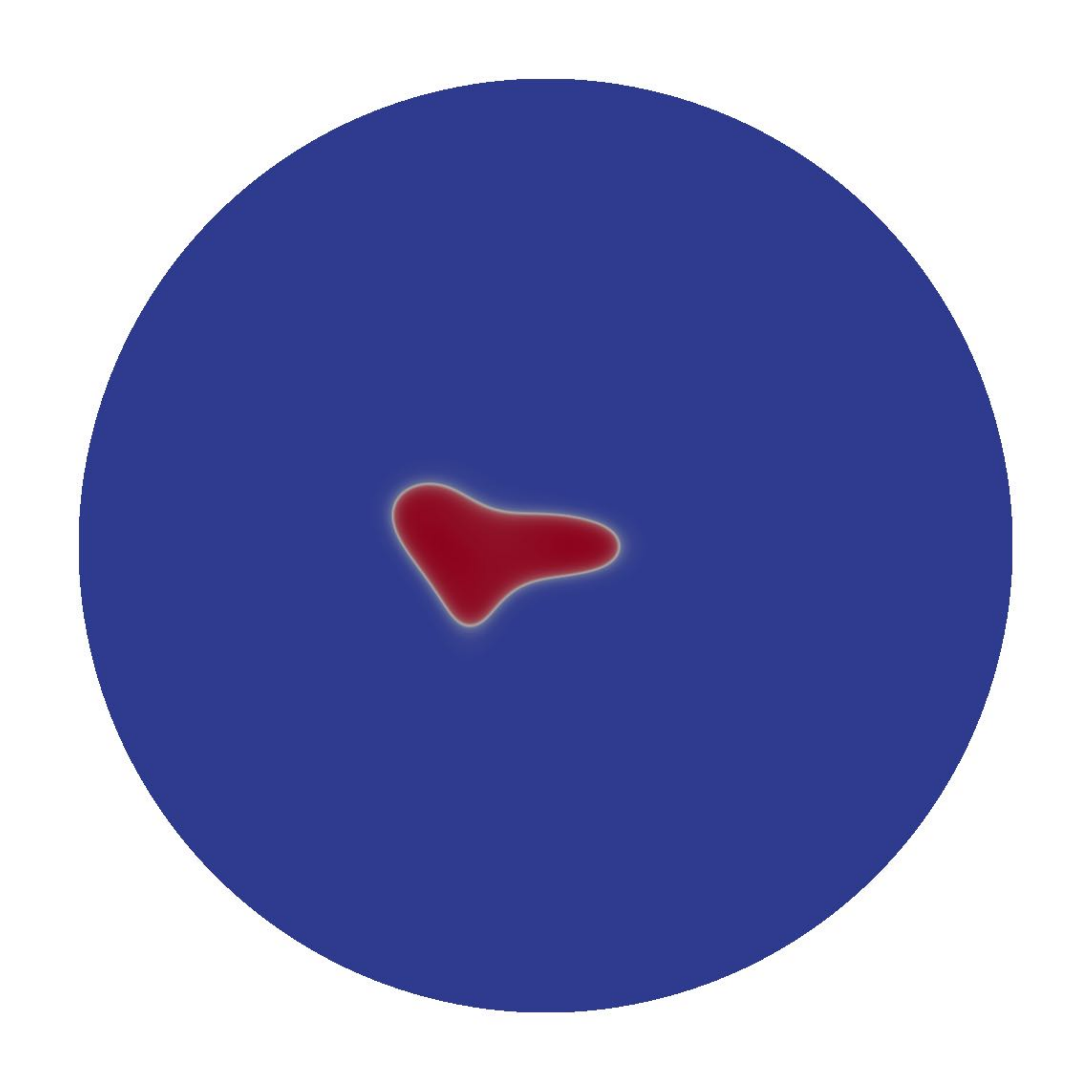}};}] {};
		\node at (-1.6,1.6) {(d)};
		\end{tikzpicture} \\[0.1cm]
		\begin{tikzpicture}
		\begin{axis}[
		hide axis,
		scale only axis,
		height=0pt,
		width=0pt,
		colormap name=special,
		colorbar horizontal,
		point meta min=0,
		point meta max=1,
		colorbar style={
			samples=100,
			height=.5cm,
			xtick={0,0.5,1},
			width=10cm
		},
		]
		\addplot [draw=none] coordinates {(0,0)};
		\end{axis}
		\end{tikzpicture}
		\caption{Choices for the initial tumor mass $\phi_{T,0}$, (a) slightly elliptic, (b) highly elliptic, (c) separated, (d) irregularly perturbed}
		\label{Figure_Initial2D}
	\end{figure}
\vspace{-0.5cm}

	\noindent In Figure \ref{Figure_Simulation2DElliptic} below,  we show the evolution of the slightly elliptic initial tumor mass (a) using 
	\begin{enumerate}[label=\Roman*,align=Center]
		\item model (\ref{Eq_Model}) without any influence of the velocity, that means we set $v \equiv 0$ and neglect the convection terms in the equations of $\phi_T$ and $\phi_\sigma$;
		\item model (\ref{Eq_Model}) without the effect of the Forchheimer law, that means we set $F_1=F_2=0$ in the velocity equation;
		\item the full model (\ref{Eq_Model}) without any restrictions.
	\end{enumerate} 

		Afterwards, we simulate the entire model III together with the initial tumor volumes (b), (c) and (d), see Figure \ref{Figure_Perturbed3} below for the results.
	
		The simulation of the full local model III with a slightly elliptic initial tumor mass (a) is depicted in the bottom row of Figure \ref{Figure_Simulation2DElliptic}. Similar to Cristini \textit{et al}\cite{cristini2009nonlinear,cristini2003nonlinear,wise2008three,wise2011adaptive,zheng2005nonlinear}, Macklin \textit{et al}\cite{macklin2005evolving,macklin2006improved,macklin2008new}, and Garcke \textit{et al}\cite{garcke2016cahn}, we notice an evolving shape instability. Starting from the slightly elliptic initial tumor mass, the ellipticity is enforced at the beginning and on the 10th day we see a clearly elliptic tumor volume. At the 15th day, a slight bulge forms along the horizontal direction and two buds form at the horizontal end points. These buds continue to evolve vertically with a new bulge oriented along the vertical directions and therefore, for each bud two new buds are forming, see the simulation on the 21st day. This behavior of the tumor cells implies that the instability repeats itself and  this highly complex evolution in tumor shape is captured by the high-order phase-field structure of the model and is indicative of examples in tumor growth in living tissue.
	
	 To inspect the effects of the velocity itself in the model, we redo the first simulation with the same initial data (a) but without the presence of any velocity, which means we are in the case of model I as described above. We depict the results of the simulation in the first row of Figure \ref{Figure_Simulation2DElliptic}. We observe that the tumor stays in its symmetric shape, resembling the results in Ref.~\citen{hawkins2012numerical}. We conclude that the velocity highly influences the shape of the tumor, even though the velocity parameters do not mainly impact the tumor volume as the sensitivity analysis has shown in Section \ref{Section_Sens}.
	
		In the next simulation, see the middle row of Figure \ref{Figure_Simulation2DElliptic} for the result, we use the slightly elliptic initial data (a) and model II, that means we set the Forchheimer constants $F_1$ and $F_2$ equal to zero. We observe that the result largely resembles the simulation of the full case III\,(a), in the sense that the Forchheimer terms delay the tumor evolution. We notice that the tumor mass splits into two parts, which begin to approach each other on the lower and upper bulbs. Eventually, these buds reconnect and therefore, trapping the healthy tissue within the tumor, which has also been observed in Ref.~\citen{cristini2003nonlinear}.	
		
		\pagebreak

		\begin{figure}[H]
		\centering
		\begin{tikzpicture}
		\node [draw,circle, minimum width=.23\textwidth,
		path picture = {
			\node at (path picture bounding box.center) { \includegraphics[width=.6\textwidth]{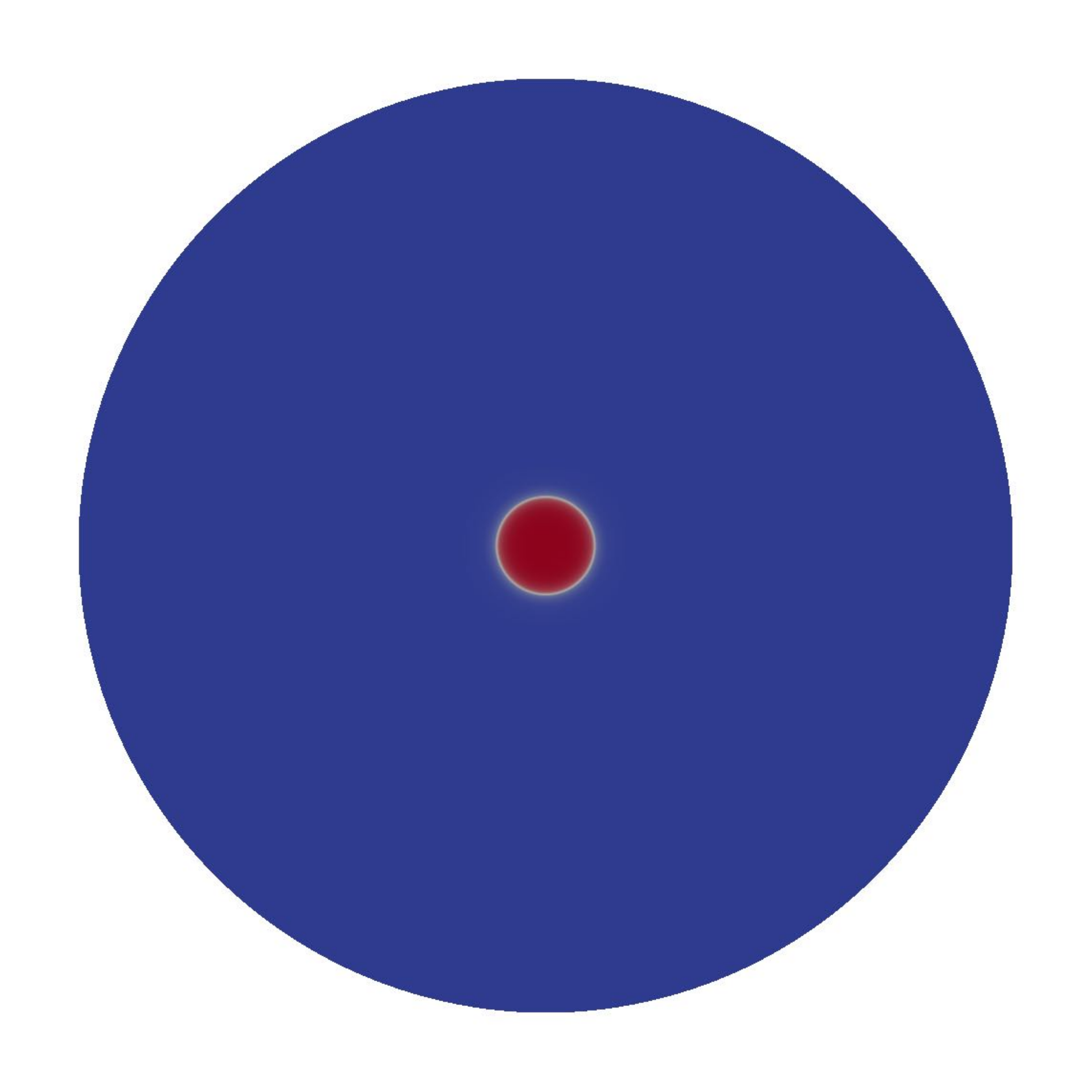}};}] {};
		\node at (-1.65,1.6) {I\,(a)};
		\node at (0,2.1) {9th day};
		\end{tikzpicture}
		\begin{tikzpicture}
		\node [draw,circle, minimum width=.23\textwidth,
		path picture = {
			\node at (path picture bounding box.center) {\includegraphics[width=.6\textwidth]{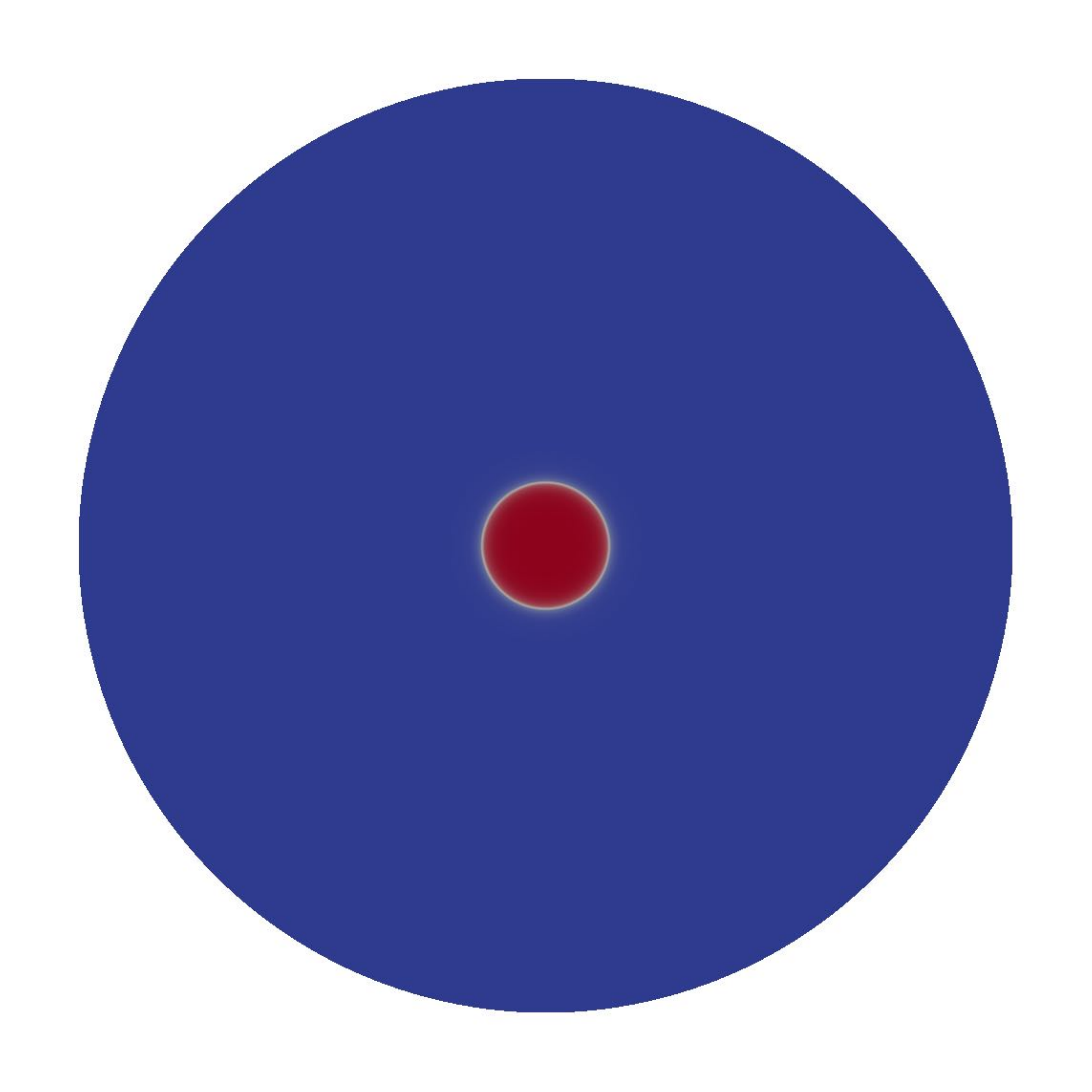}};}] {};
		\node at (0,2.1) {15th day};
		\end{tikzpicture}
		\begin{tikzpicture}
		\node [draw,circle, minimum width=.23\textwidth,
		path picture = {
			\node at (path picture bounding box.center) { \includegraphics[width=.6\textwidth]{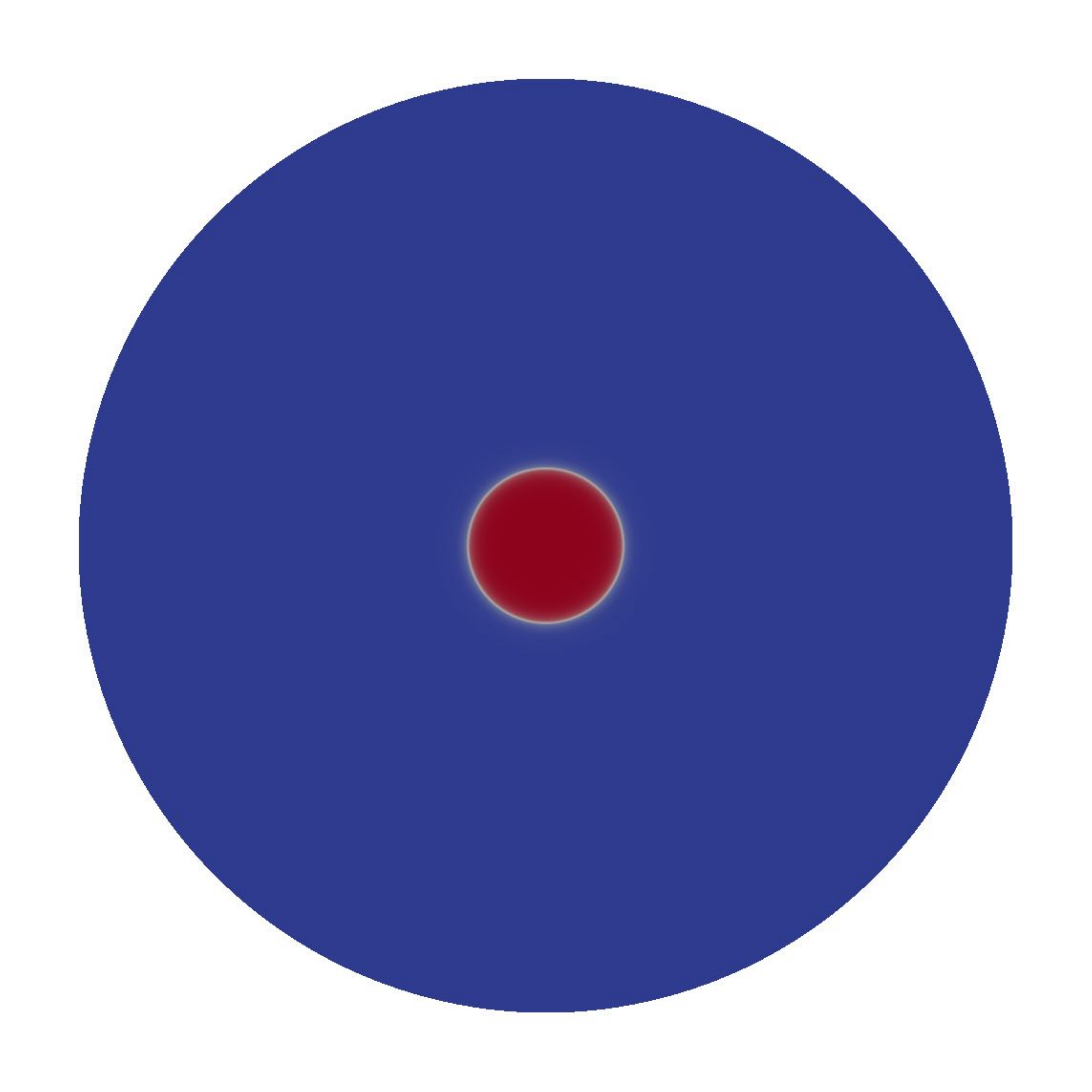}};}] {};
		\node at (0,2.1) {21st day};
		\end{tikzpicture}
		\begin{tikzpicture}
		\node [draw,circle, minimum width=.23\textwidth,
		path picture = {
			\node at (path picture bounding box.center) { \includegraphics[width=.6\textwidth]{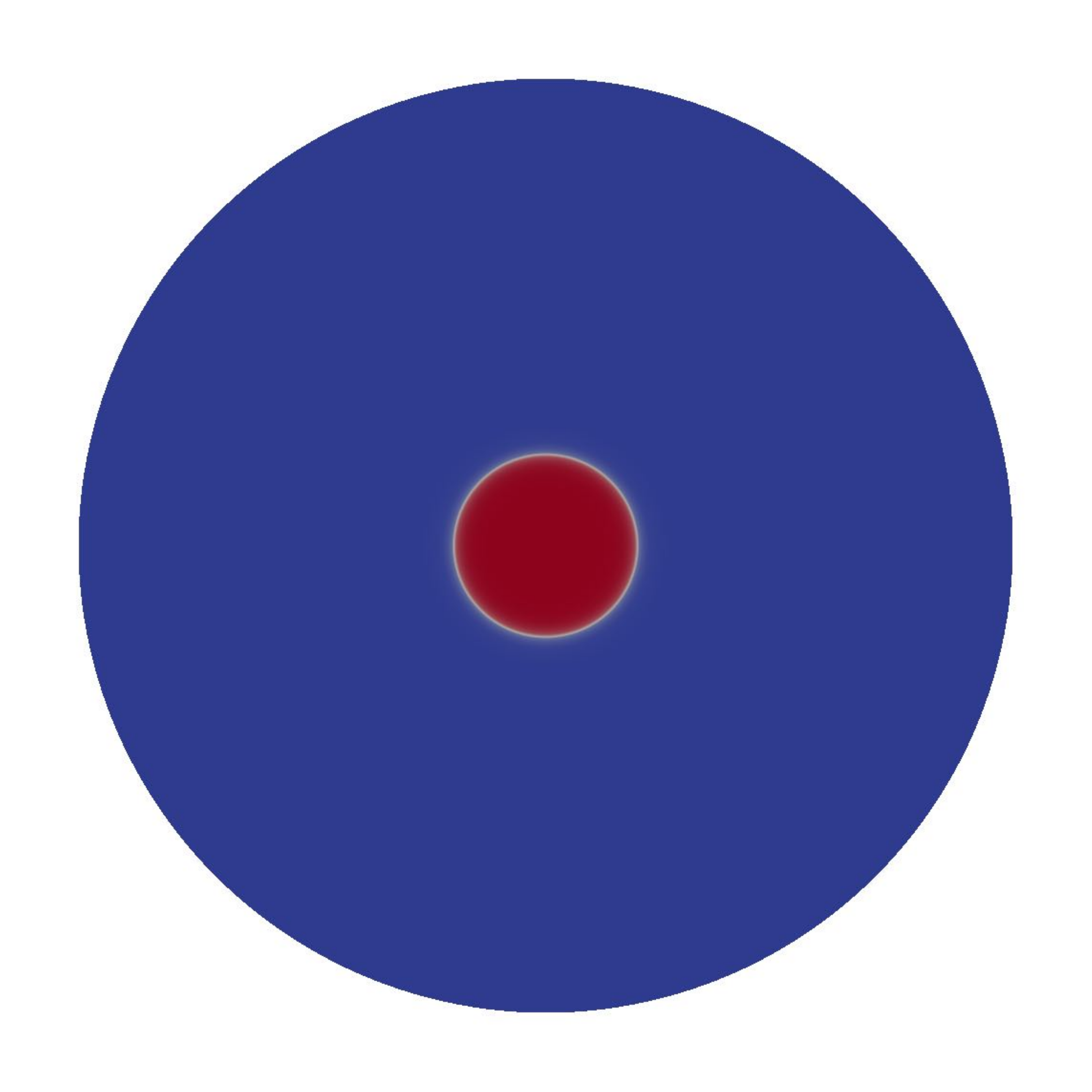}};}] {};
		\node at (0,2.1) {27th day};
		\end{tikzpicture} \\[0.2cm]
		\begin{tikzpicture}
		\node [draw,circle, minimum width=.23\textwidth,
		path picture = {
			\node at (path picture bounding box.center) { \includegraphics[width=.6\textwidth]{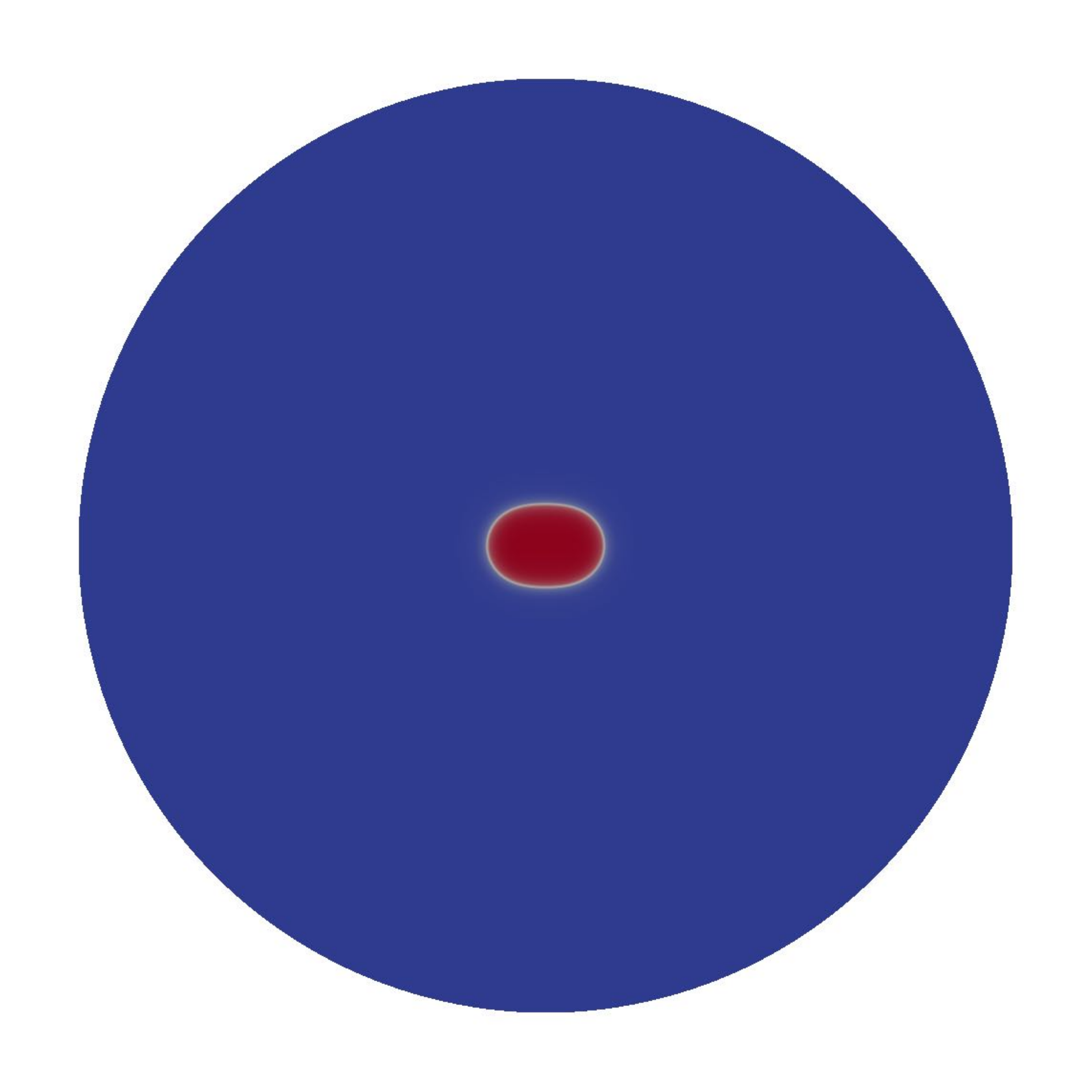}};}] {};
		\node at (-1.65,1.6) {II\,(a)};
		\end{tikzpicture}
		\begin{tikzpicture}
		\node [draw,circle, minimum width=.23\textwidth,
		path picture = {
			\node at (path picture bounding box.center) { \includegraphics[width=.6\textwidth]{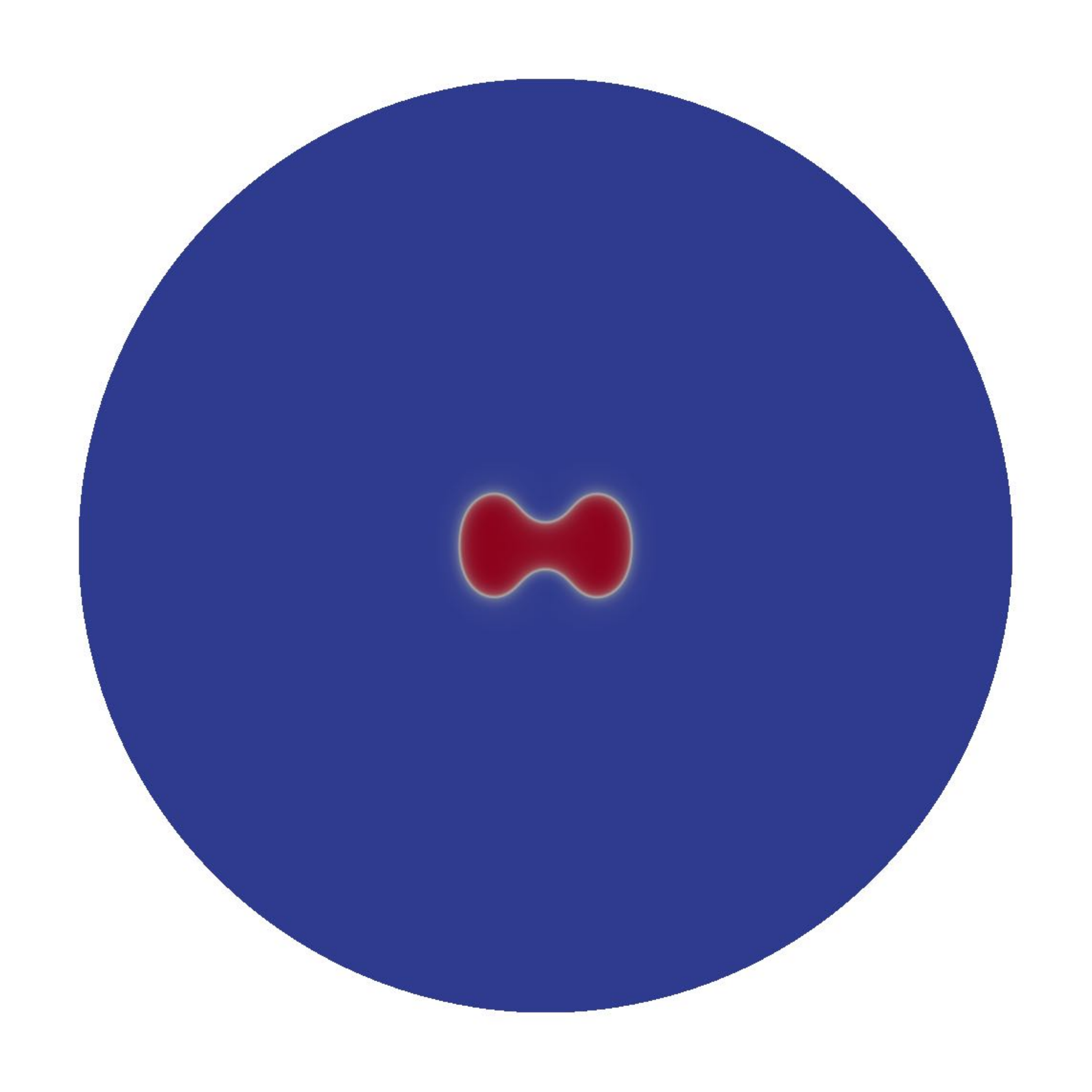}};}] {};
		\end{tikzpicture}
		\begin{tikzpicture}
		\node [draw,circle, minimum width=.23\textwidth,
		path picture = {
			\node at (path picture bounding box.center) { \includegraphics[width=.6\textwidth]{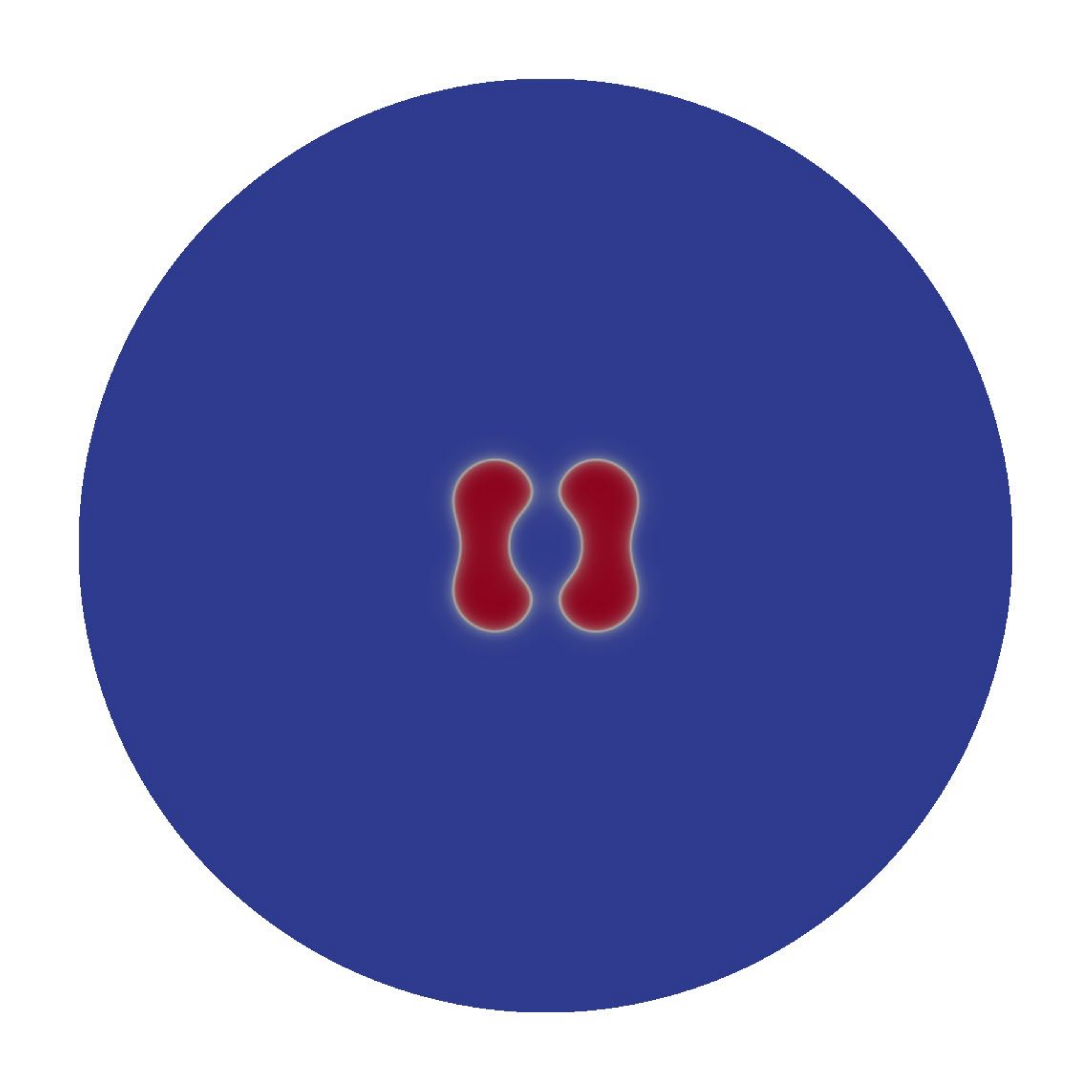}};}] {};
		\end{tikzpicture}
		\begin{tikzpicture}
		\node [draw,circle, minimum width=.23\textwidth,
		path picture = {
			\node at (path picture bounding box.center) { \includegraphics[width=.6\textwidth]{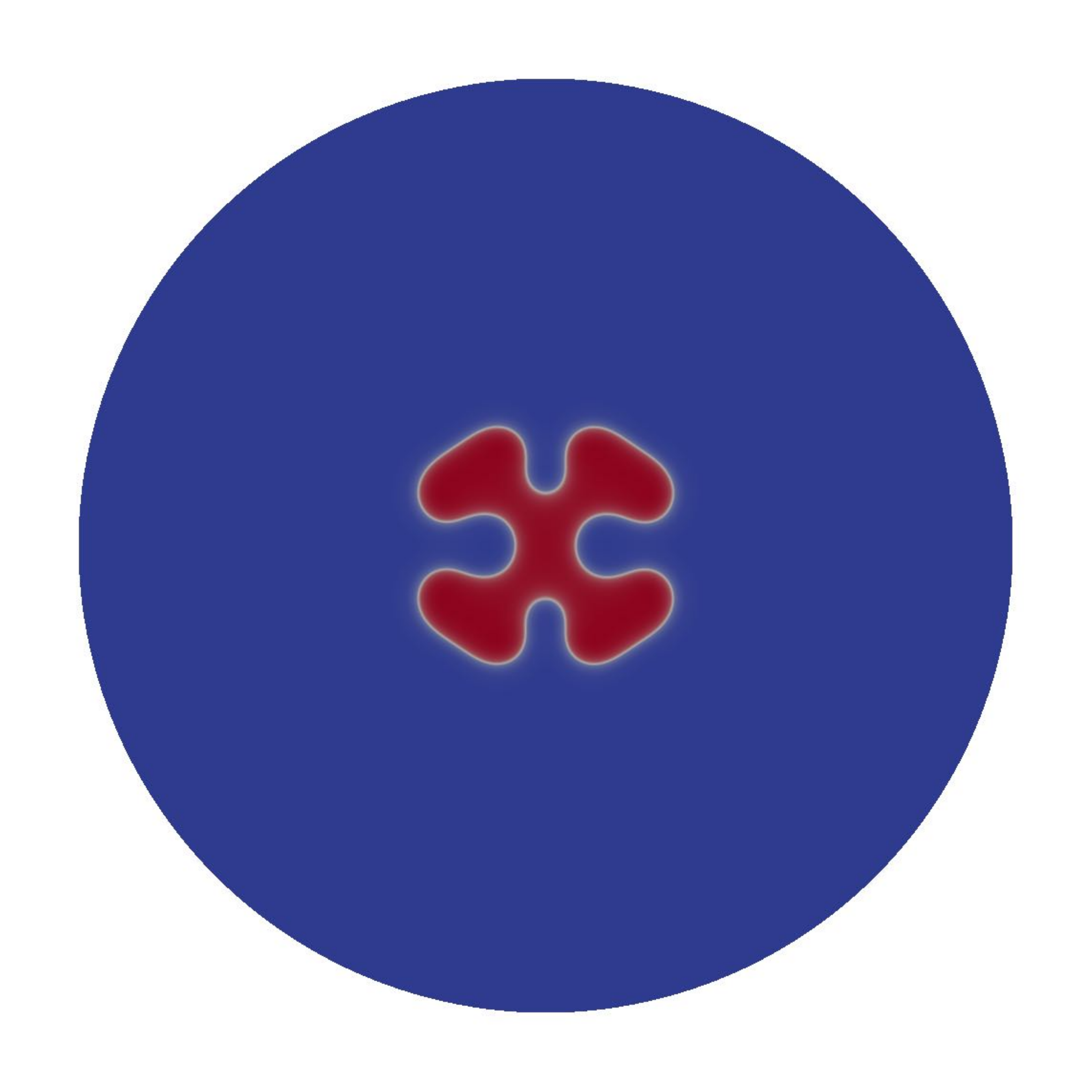}};}] {};
		\end{tikzpicture} \\[0.2cm]
		\begin{tikzpicture}
		\node [draw,circle, minimum width=.23\textwidth,
		path picture = {
			\node at (path picture bounding box.center) { \includegraphics[width=.6\textwidth]{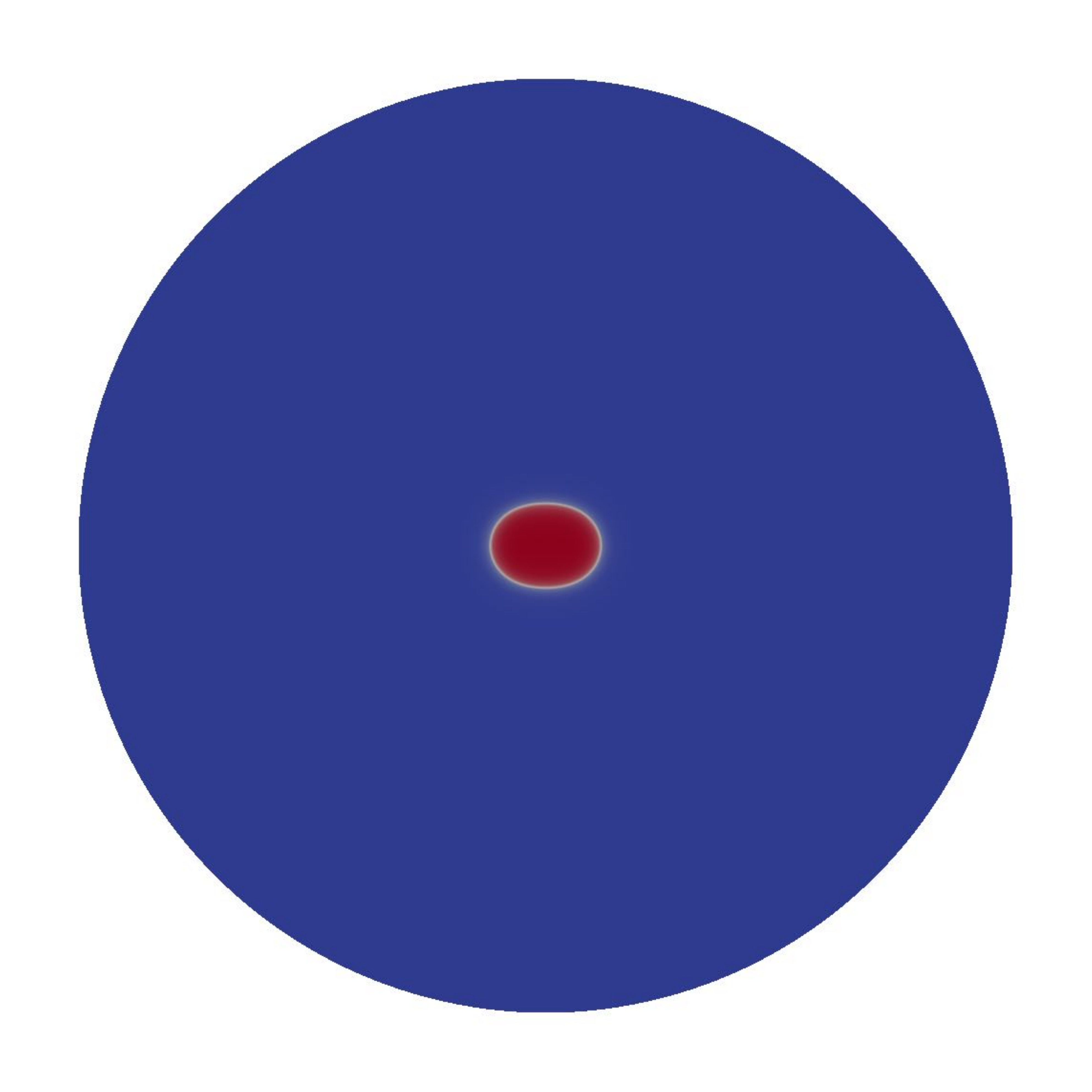}};}] {};
		\node at (-1.65,1.6) {III\,(a)};
		\end{tikzpicture}
		\begin{tikzpicture}
		\node [draw,circle, minimum width=.23\textwidth,
		path picture = {
			\node at (path picture bounding box.center) { \includegraphics[width=.6\textwidth]{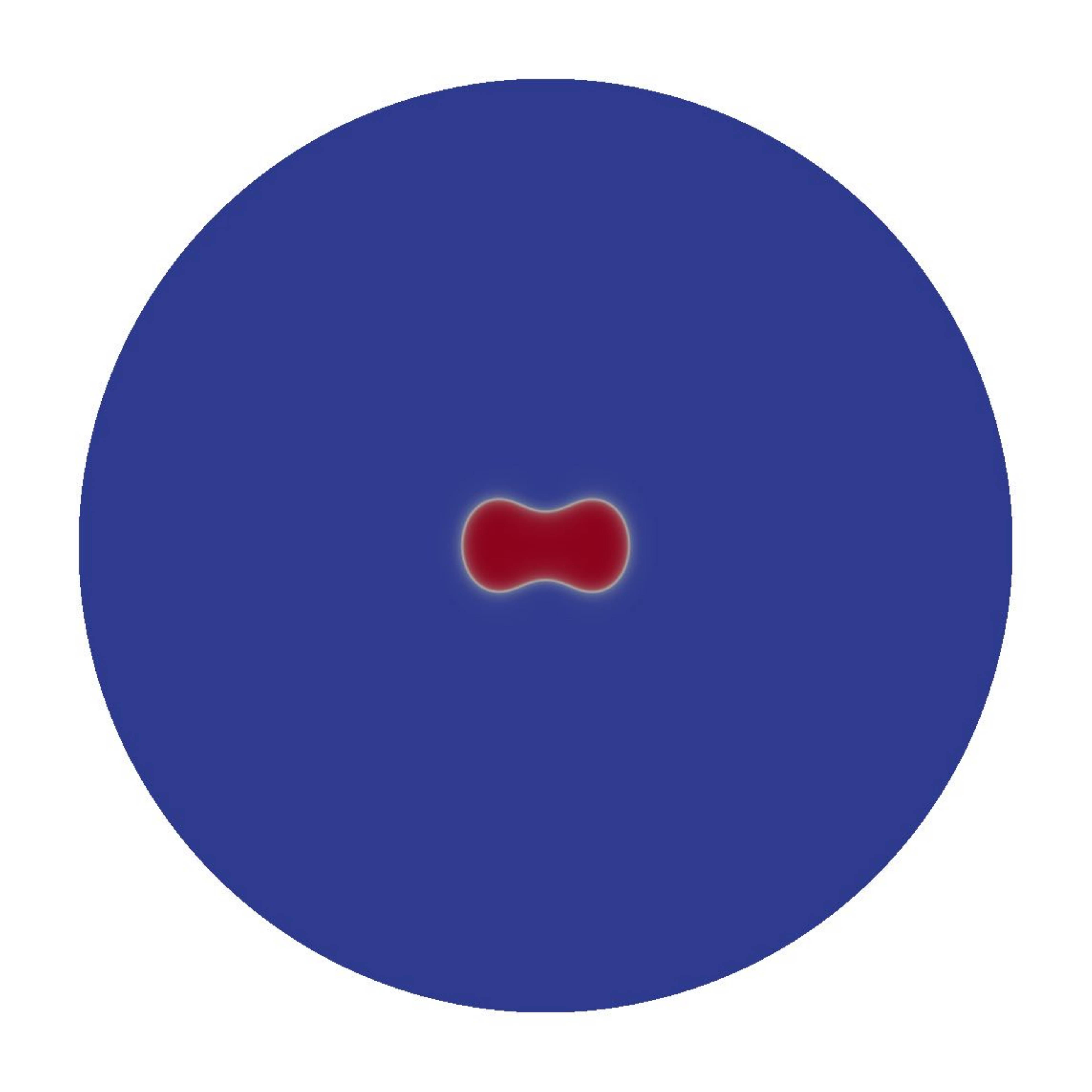}};}] {};
		\end{tikzpicture}
		\begin{tikzpicture}
		\node [draw,circle, minimum width=.23\textwidth,
		path picture = {
			\node at (path picture bounding box.center) { \includegraphics[width=.6\textwidth]{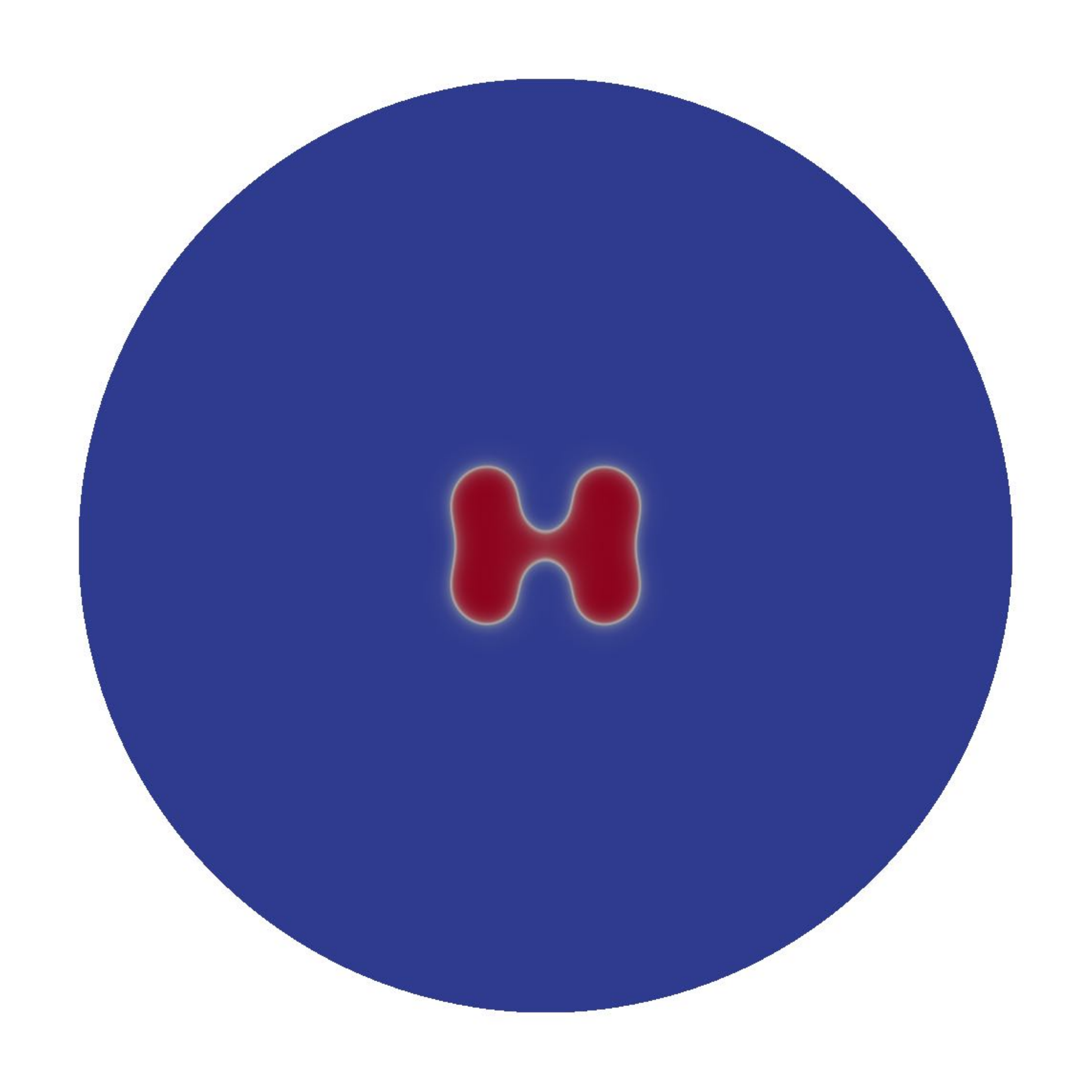}};}] {};
		\end{tikzpicture}
		\begin{tikzpicture}
		\node [draw,circle, minimum width=.23\textwidth,
		path picture = {
			\node at (path picture bounding box.center) { \includegraphics[width=.6\textwidth]{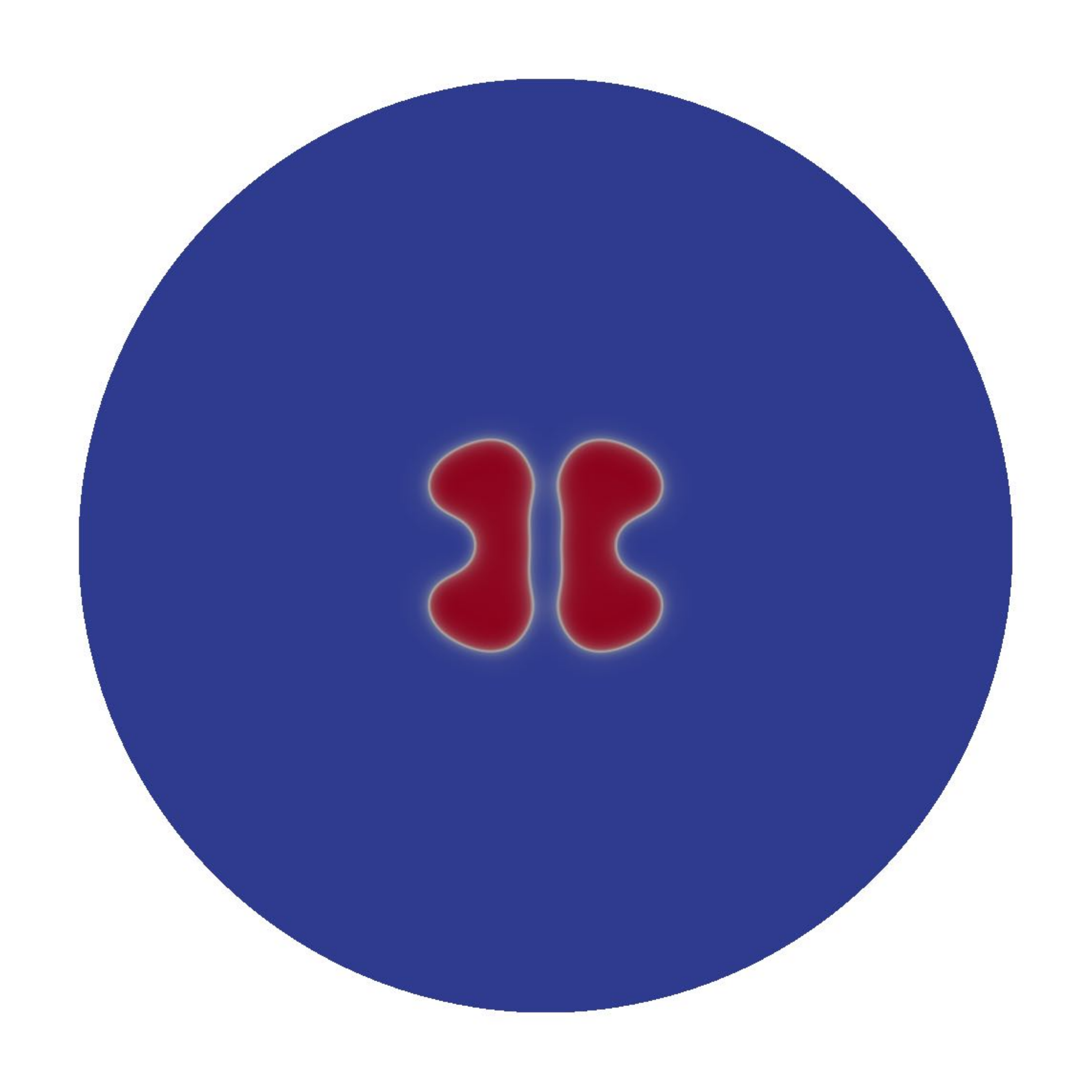}};}] {};
		\end{tikzpicture} \\[0.1cm]
		\begin{tikzpicture}
		\begin{axis}[
		hide axis,
		scale only axis,
		height=0pt,
		width=0pt,
		colormap name=special,
		colorbar horizontal,
		point meta min=0,
		point meta max=1,
		colorbar style={
			samples=100,
			height=.5cm,
			xtick={0,0.5,1},
			width=10cm
		},
		]
		\addplot [draw=none] coordinates {(0,0)};
		\end{axis}
		\end{tikzpicture}
		\caption{Evolution of the tumor volume fraction $\phi_T$ starting from the initial slightly elliptic tumor mass (a) using the models, I without velocity, II without the Forchheimer law, III full model}
		\label{Figure_Simulation2DElliptic}
	\end{figure}
	
		Next, we start from the three initial conditions (b)--(d), which have been depicted in Figure \ref{Figure_Initial2D}, and simulate the evolution of the tumor cell volume fraction using the full model III. See Figure \ref{Figure_Perturbed3} for the simulation results. 

	In the case of the highly elliptic initial tumor shape (b), we observe on the 6th day that three buds are forming, two at the end points of the horizontal shape and one in the middle. These buds continue to evolve vertically and eventually separate from each other, see the result on the 18th day. The lower and upper parts of the buds will connect again, and therefore, trapping the health tissue in between. Finally on the 27th day, we observe that the tumor shape has formed a simply connected domain.
	
	In the middle row of Figure \ref{Figure_Perturbed3}, we see the results of the simulation of model III starting with the separated initial tumor shape (c). We observe on the 6th day that the tumor cells are moving towards each other, until they connect, form buds and separate again. As in the case of the highly elliptic initial tumor shape (b), eventually, the tumor mass is forming a simply connected domain.
	
	Lastly, we simulate model III together with the irregularly perturbed tumor mass (d), see the last row of Figure \ref{Figure_Perturbed3}. Before, we always used for the initial tumor mass a symmetric shape. Now, the tumor volume fraction is starting irregularly and it keeps this form while growling in the evolving buds.
	
	\pagebreak
	
		\begin{figure}[H]
		\centering
		\begin{tikzpicture}
		\node [draw,circle, minimum width=.23\textwidth,
		path picture = {
			\node at (path picture bounding box.center) { \includegraphics[width=.4\textwidth]{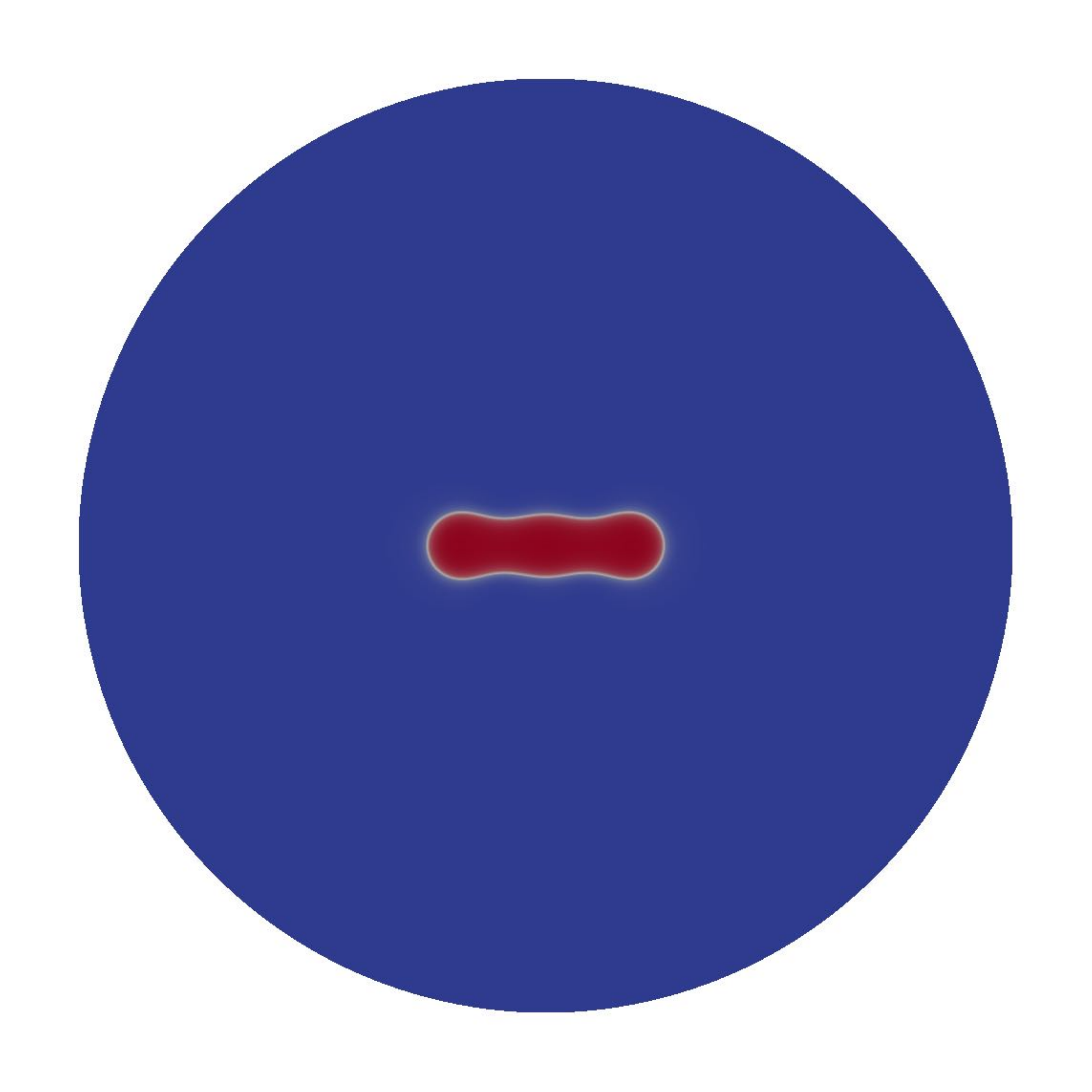}};}] {};
		\node at (-1.65,1.6) {III\,(b)};
		\node at (0,2.1) {6th day};
		\end{tikzpicture}
		\begin{tikzpicture}
		\node [draw,circle, minimum width=.23\textwidth,
		path picture = {
			\node at (path picture bounding box.center) {\includegraphics[width=.4\textwidth]{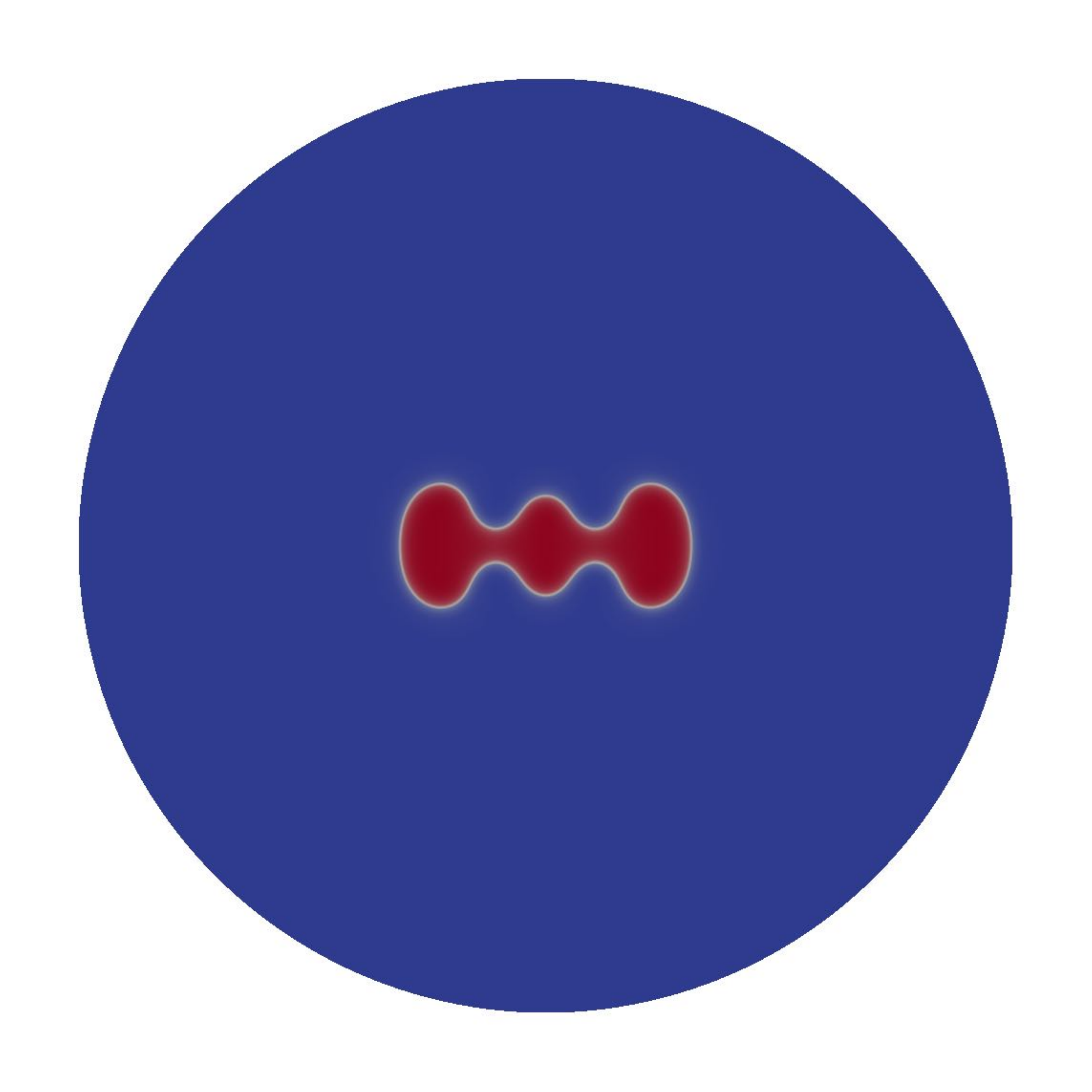}};}] {};
		\node at (0,2.1) {12th day};
		\end{tikzpicture}
		\begin{tikzpicture}
		\node [draw,circle, minimum width=.23\textwidth,
		path picture = {
			\node at (path picture bounding box.center) { \includegraphics[width=.4\textwidth]{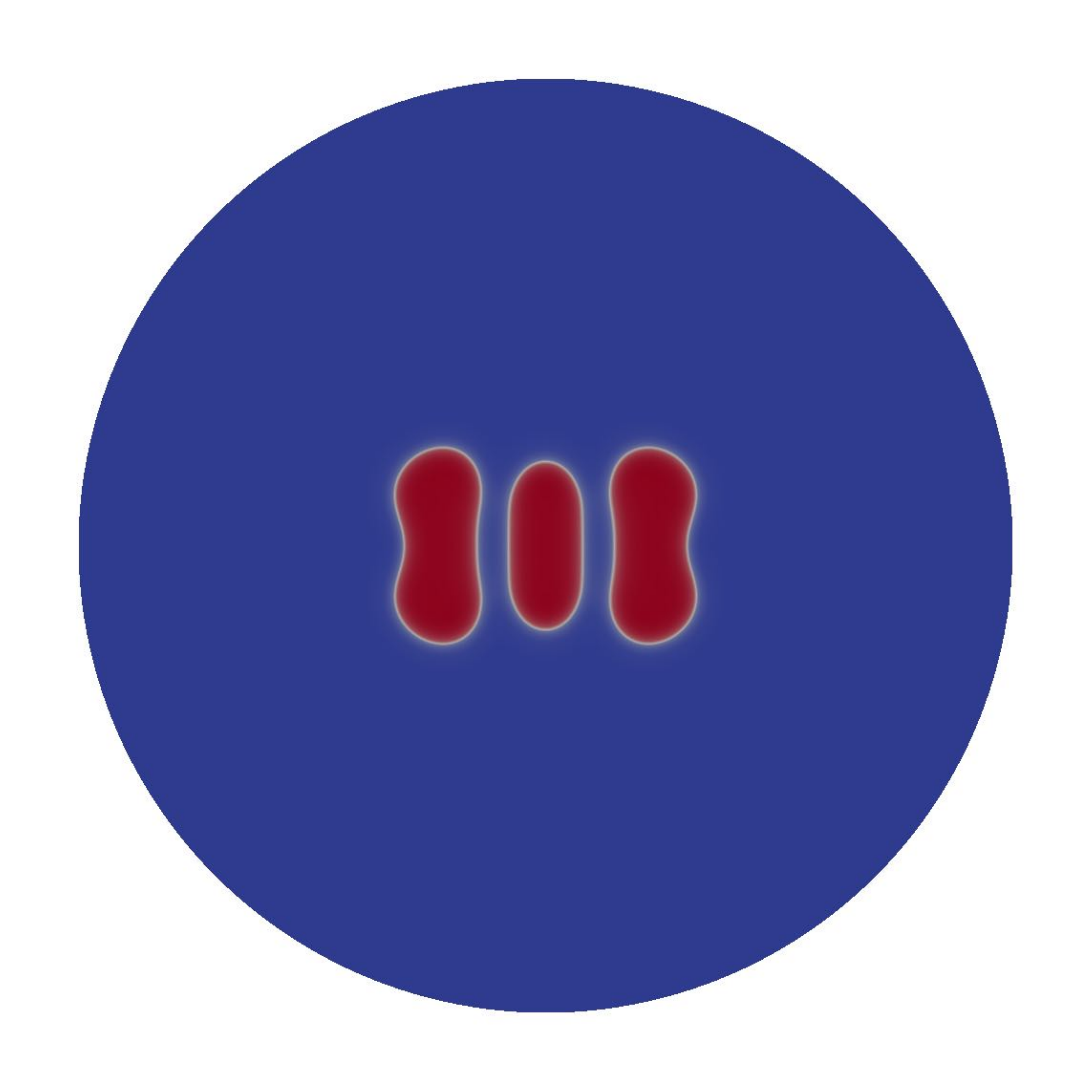}};}] {};
		\node at (0,2.1) {18th day};
		\end{tikzpicture}
		\begin{tikzpicture}
		\node [draw,circle, minimum width=.23\textwidth,
		path picture = {
			\node at (path picture bounding box.center) { \includegraphics[width=.4\textwidth]{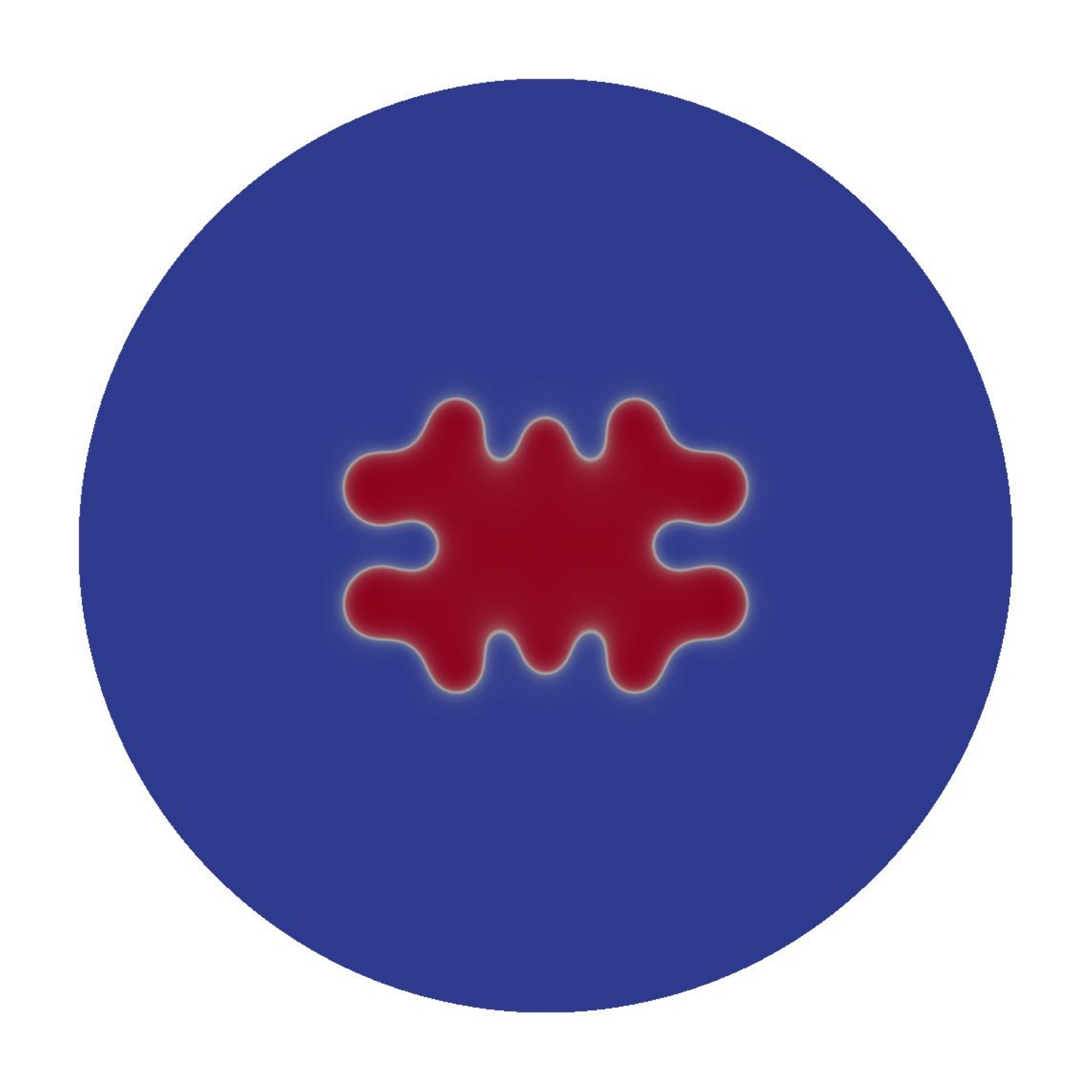}};}] {};
		\node at (0,2.1) {27th day};
		\end{tikzpicture} \\[0.2cm]
		\begin{tikzpicture}
		\node [draw,circle, minimum width=.23\textwidth,
		path picture = {
			\node at (path picture bounding box.center) { \includegraphics[width=.4\textwidth]{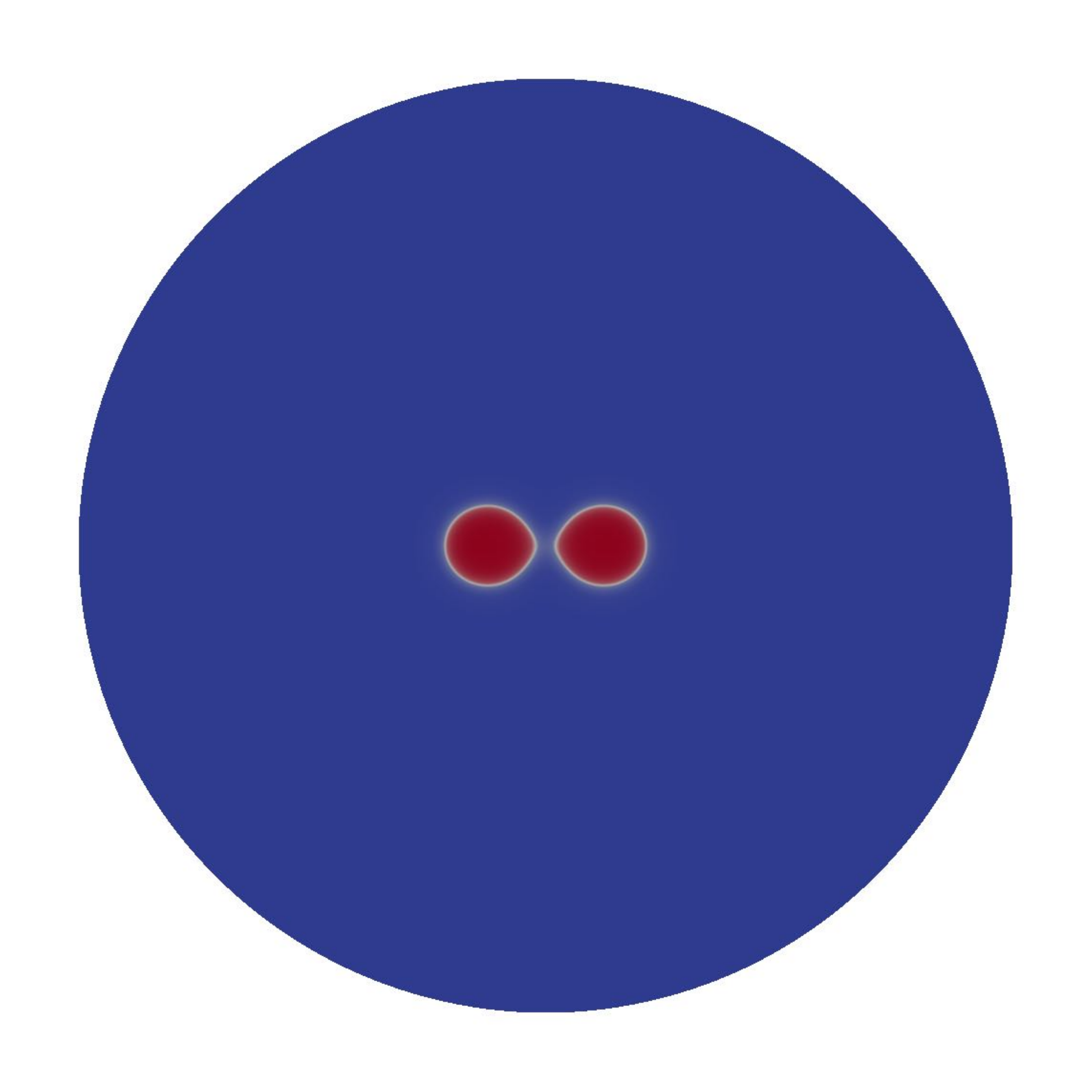}};}] {};
		\node at (-1.65,1.6) {III\,(c)};
		\end{tikzpicture}
		\begin{tikzpicture}
		\node [draw,circle, minimum width=.23\textwidth,
		path picture = {
			\node at (path picture bounding box.center) { \includegraphics[width=.4\textwidth]{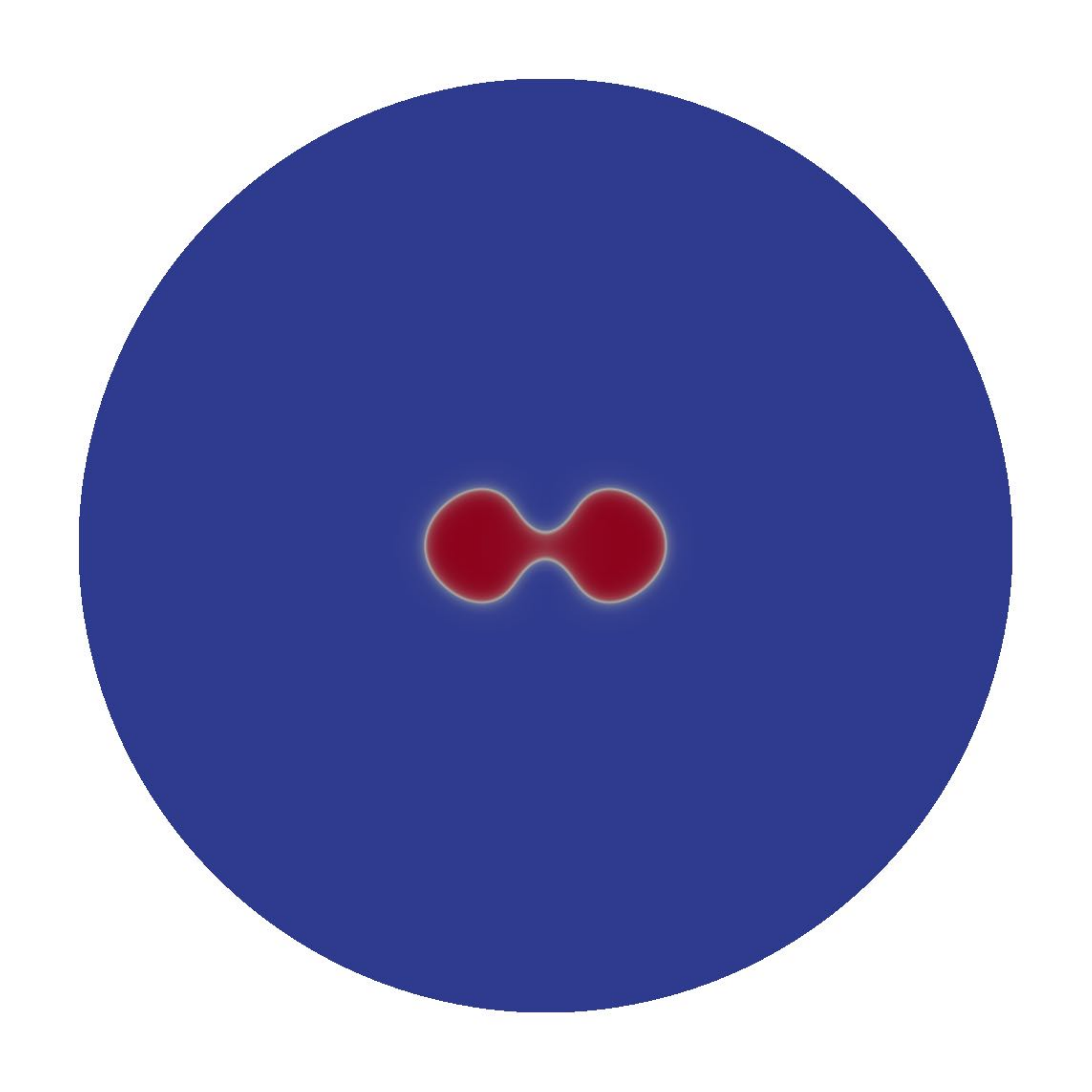}};}] {};
		\end{tikzpicture}
		\begin{tikzpicture}
		\node [draw,circle, minimum width=.23\textwidth,
		path picture = {
			\node at (path picture bounding box.center) { \includegraphics[width=.4\textwidth]{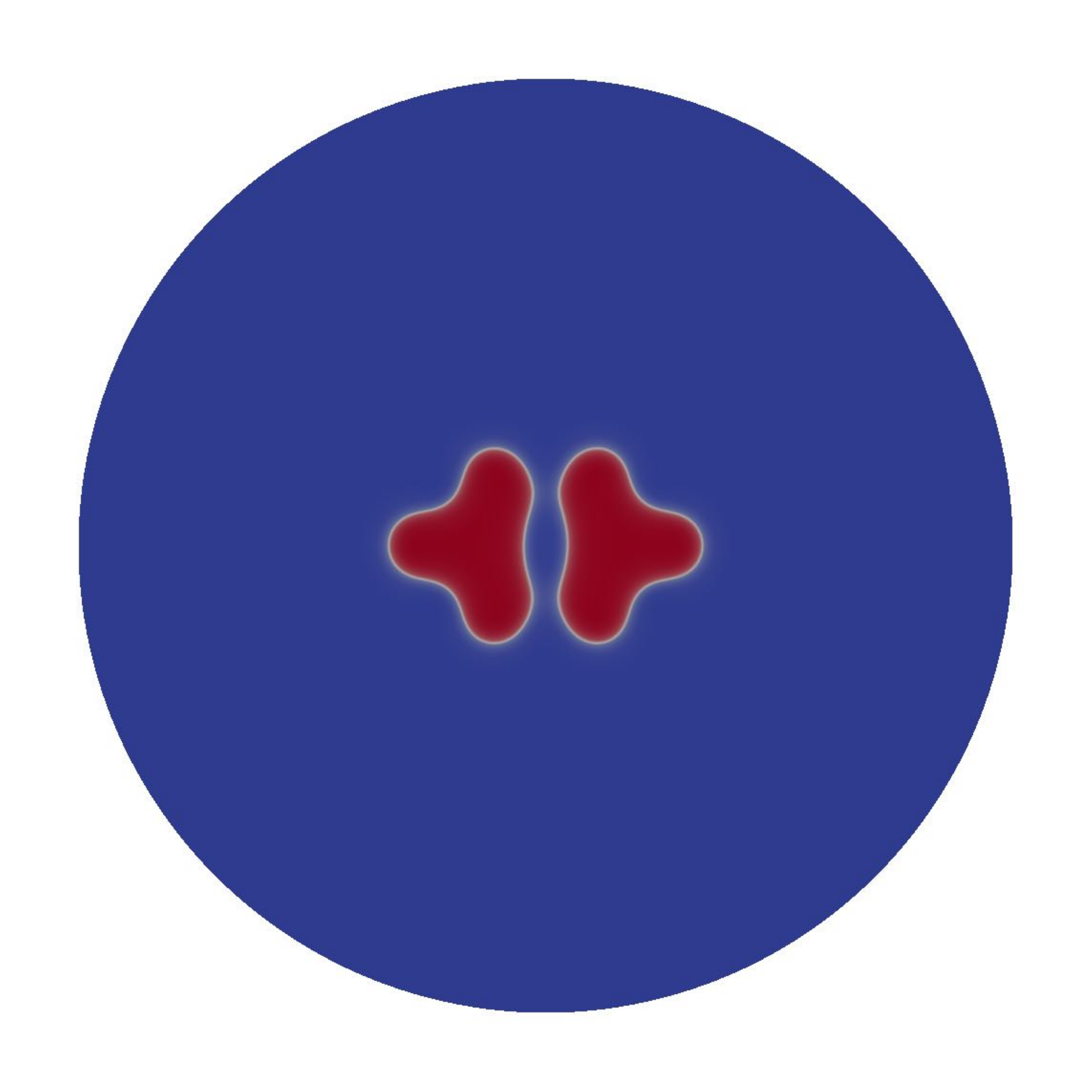}};}] {};
		\end{tikzpicture}
		\begin{tikzpicture}
		\node [draw,circle, minimum width=.23\textwidth,
		path picture = {
			\node at (path picture bounding box.center) { \includegraphics[width=.4\textwidth]{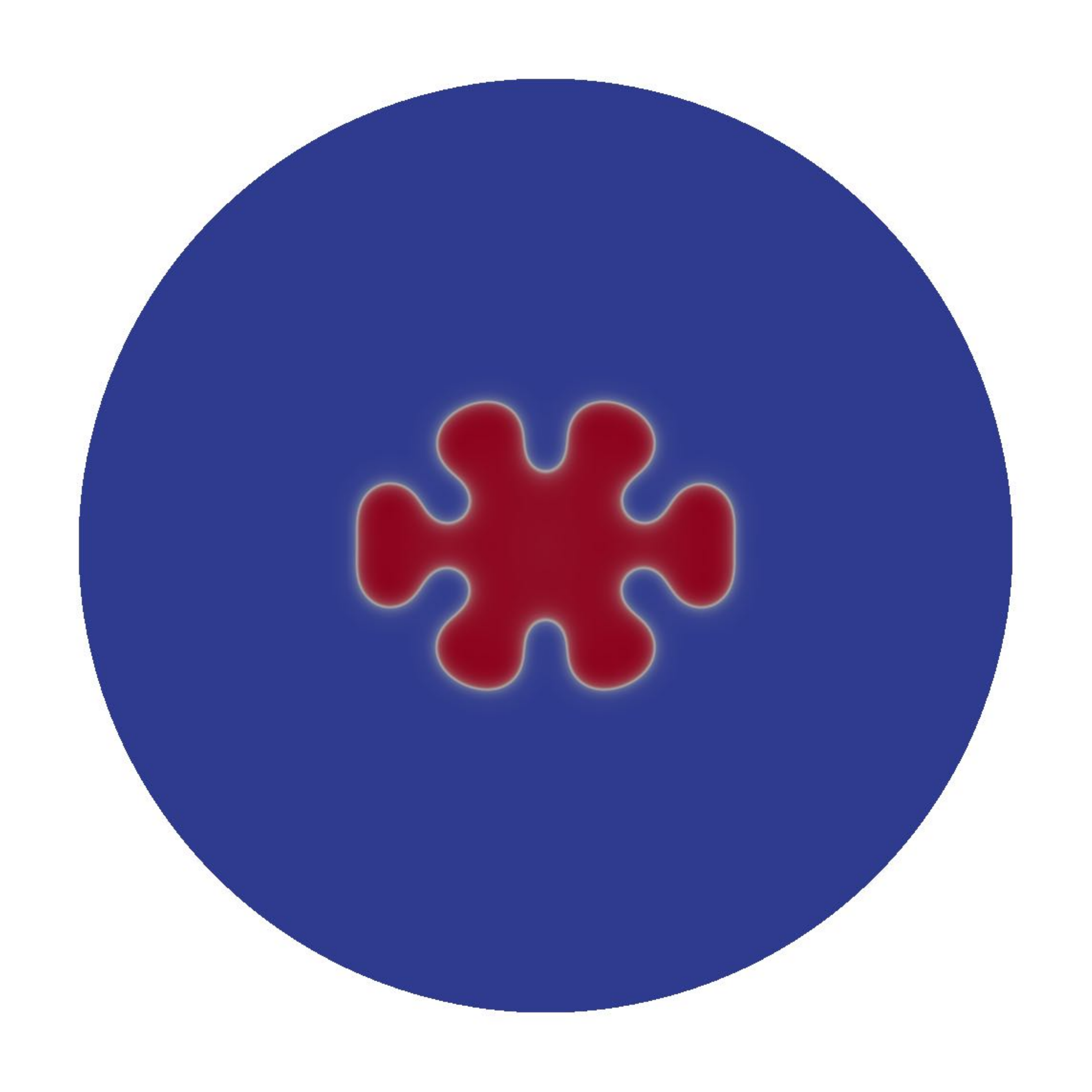}};}] {};
		\end{tikzpicture} \\[0.2cm]
		\begin{tikzpicture}
		\node [draw,circle, minimum width=.23\textwidth,
		path picture = {
			\node at (0.2,0) { \includegraphics[width=.4\textwidth]{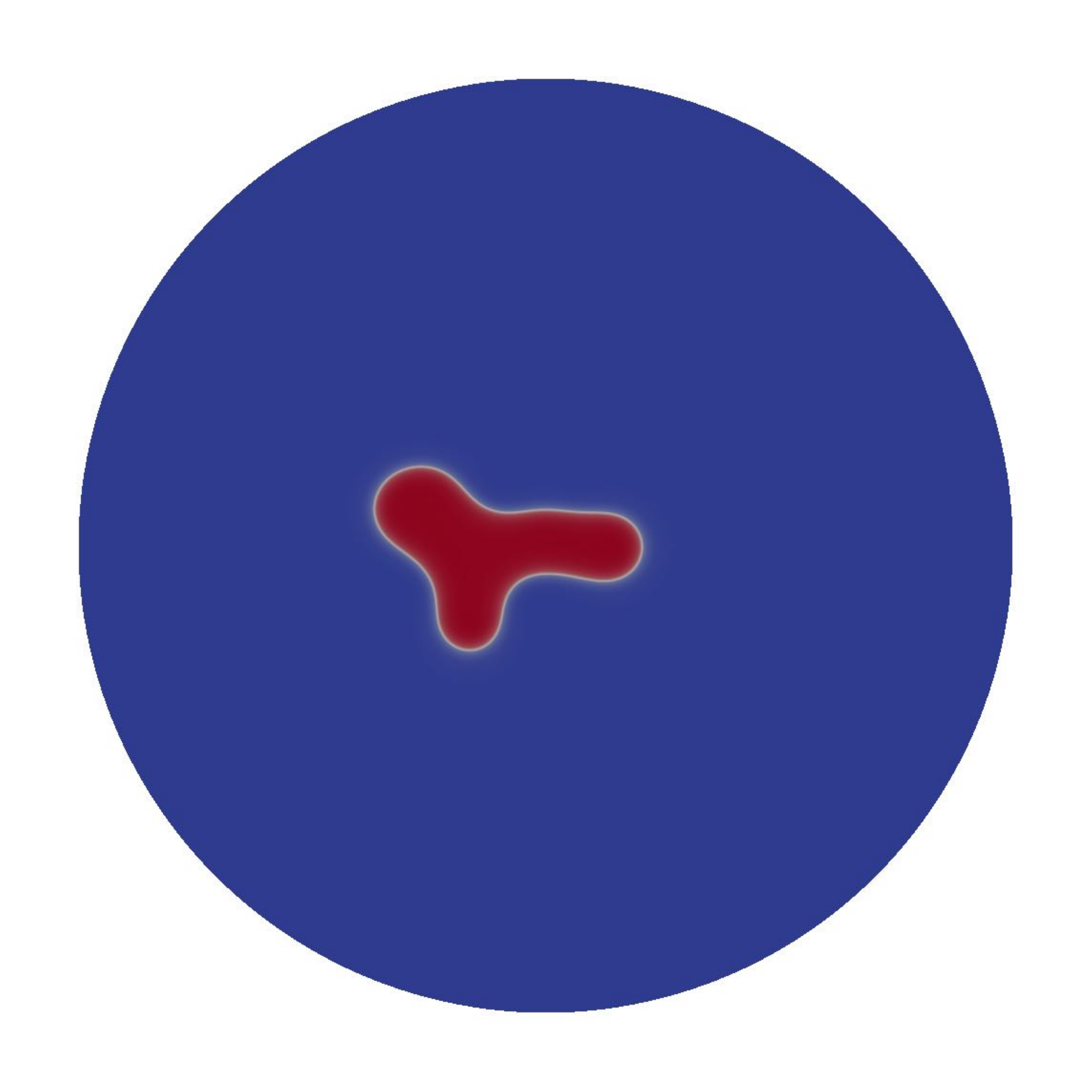}};}] {};
		\node at (-1.65,1.6) {III\,(d)};
		\end{tikzpicture}
		\begin{tikzpicture}
		\node [draw,circle, minimum width=.23\textwidth,
		path picture = {
			\node at (0.2,0) { \includegraphics[width=.4\textwidth]{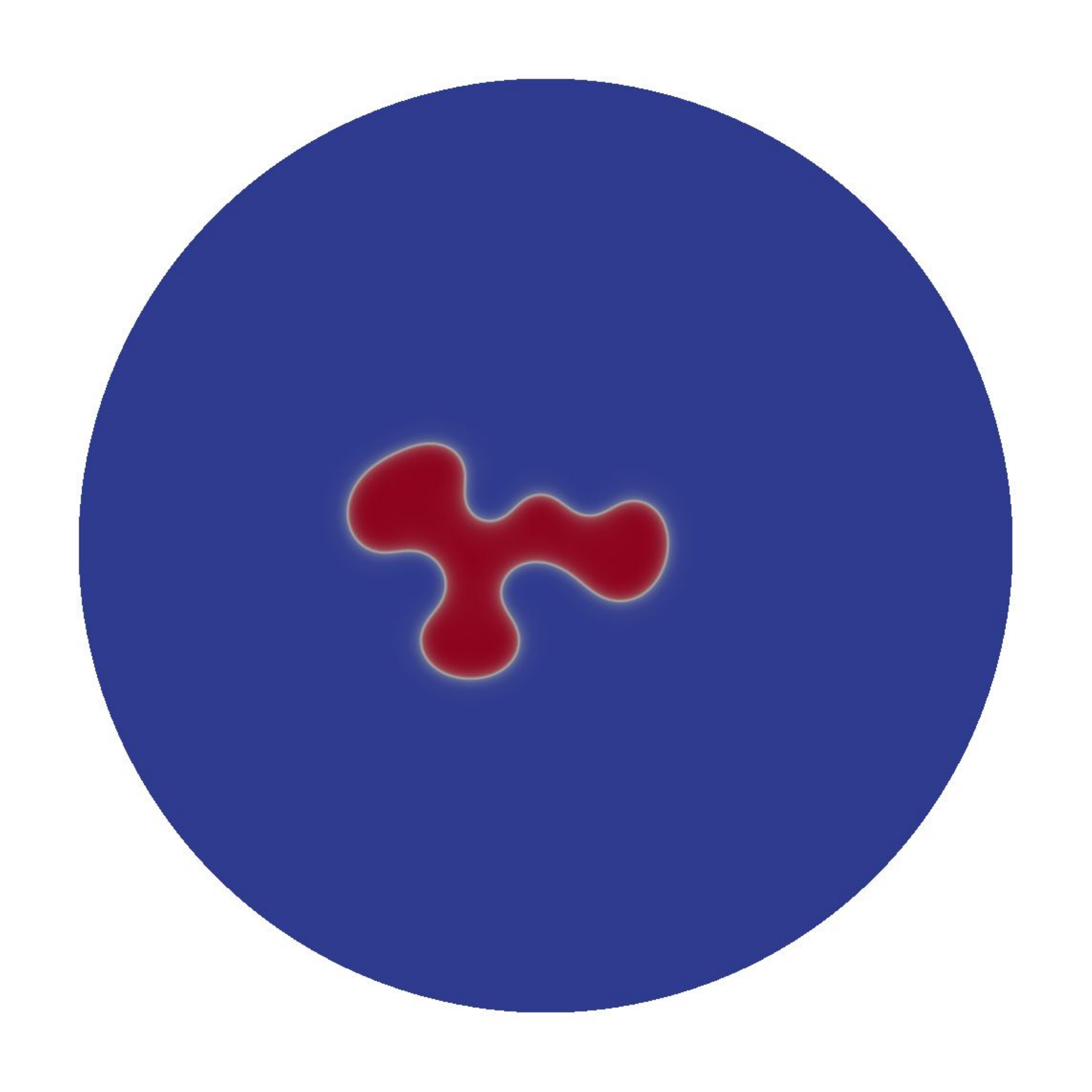}};}] {};
		\end{tikzpicture}
		\begin{tikzpicture}
		\node [draw,circle, minimum width=.23\textwidth,
		path picture = {
			\node at (0.2,0) { \includegraphics[width=.4\textwidth]{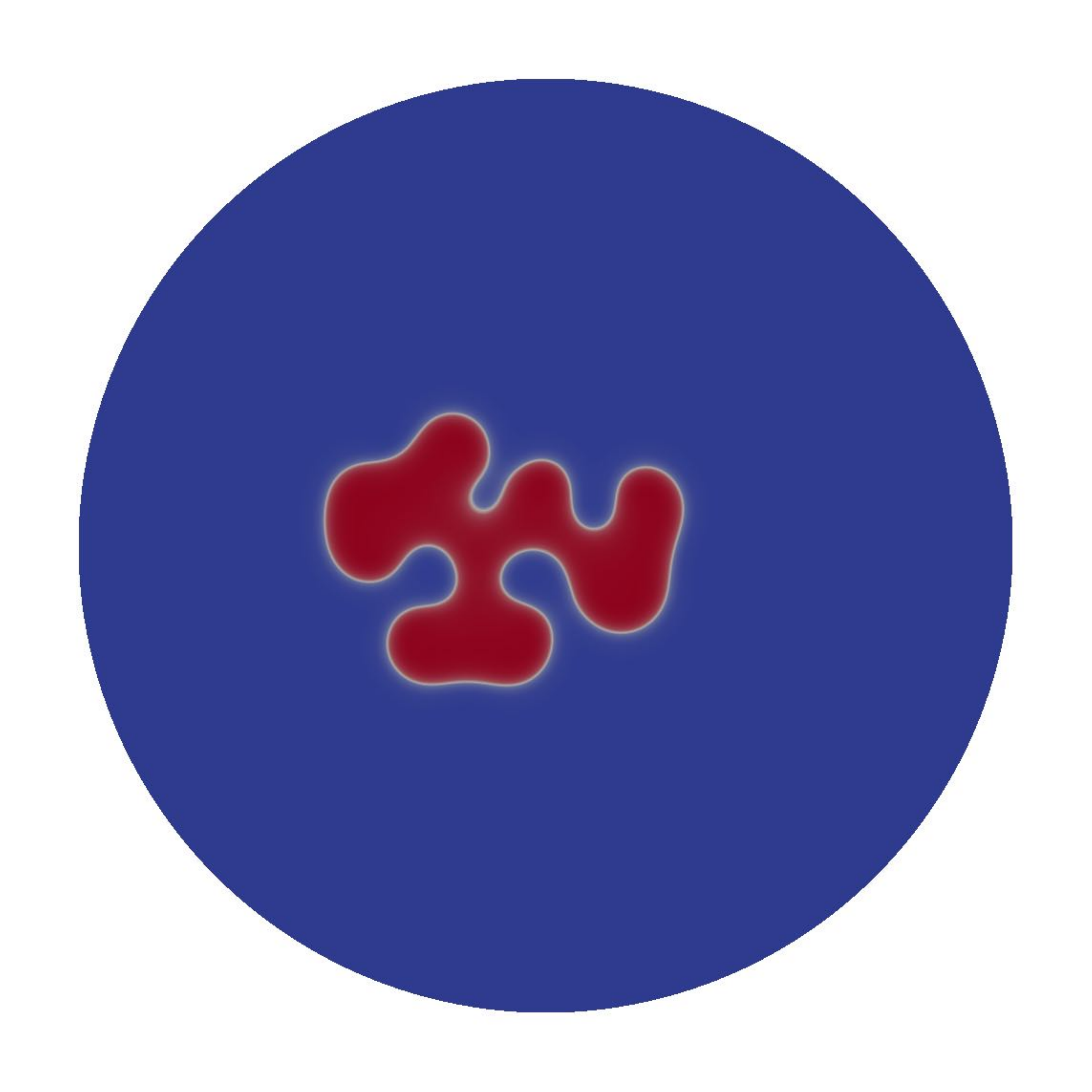}};}] {};
		\end{tikzpicture}
		\begin{tikzpicture}
		\node [draw,circle, minimum width=.23\textwidth,
		path picture = {
			\node at (0.2,0) { \includegraphics[width=.4\textwidth]{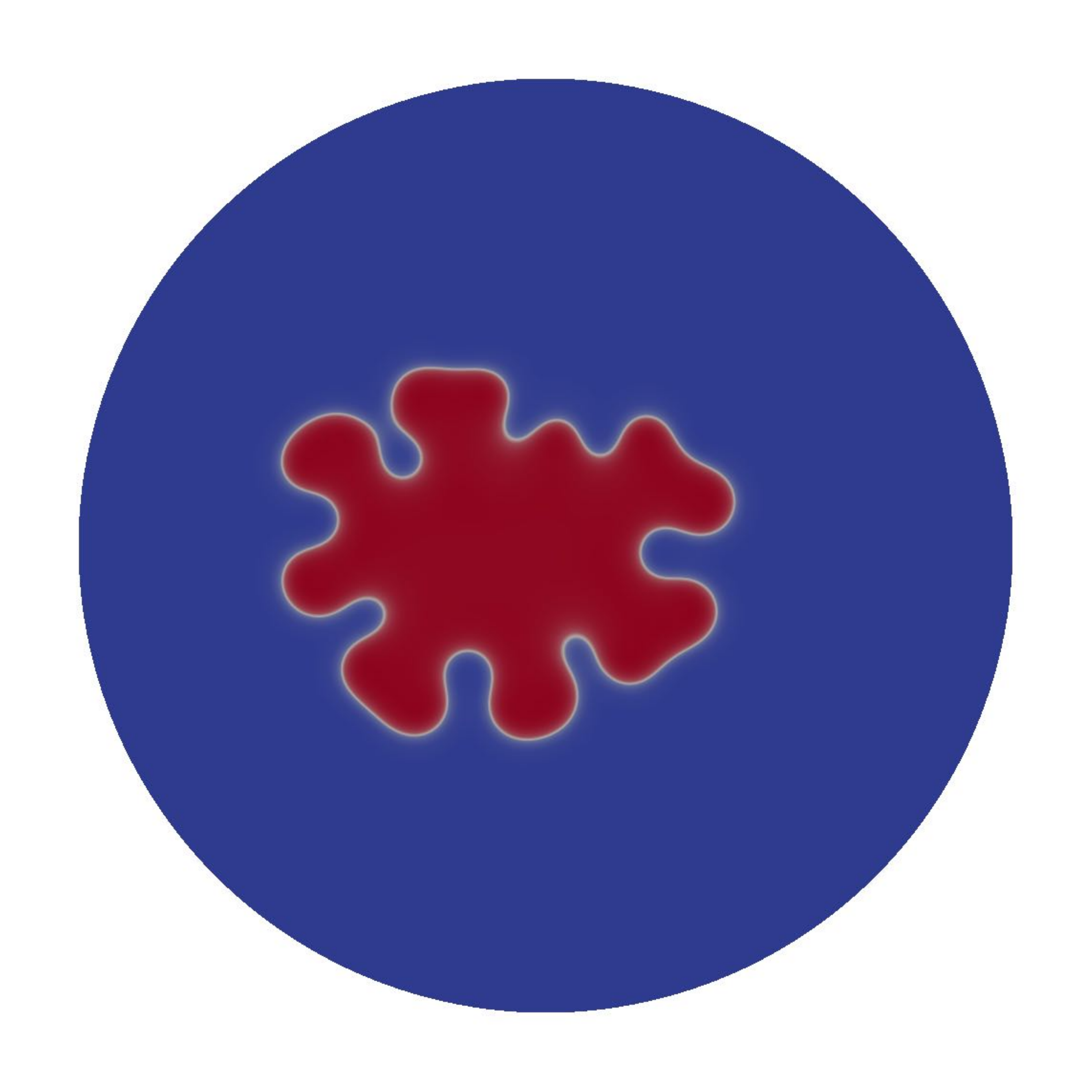}};}] {};
		\end{tikzpicture}\\[0.1cm]
		\begin{tikzpicture}
		\begin{axis}[
		hide axis,
		scale only axis,
		height=0pt,
		width=0pt,
		colormap name=special,
		colorbar horizontal,
		point meta min=0,
		point meta max=1,
		colorbar style={
			samples=100,
			height=.5cm,
			xtick={0,0.5,1},
			width=10cm
		},
		]
		\addplot [draw=none] coordinates {(0,0)};
		\end{axis}
		\end{tikzpicture}
		\caption{Evolution of the tumor volume fraction $\phi_T$ using the full model III, starting from  (b) a highly elliptic tumor mass, (c) two separated tumor masses, (d) an irregularly perturbed tumor mass}
		\label{Figure_Perturbed3}
	\end{figure}

	\section{Concluding Remarks} \label{Section_Conclusion}
	In this paper, we present a mathematical analysis of a class of phase-field models of the growth and decline of tumors in living organisms, in which convective velocities of tumor cells are assumed to obey a time-dependent Darcy--Forchheimer--Brinkman flow and in which long-range interactions of cell species are accounted for through nonlocal integro-differential operators. Under some mild assumptions on mathematical properties of the governing operators, we are able to establish existence of weak solutions in the topologies of the underlying function spaces. 
	
	In addition, we explore the sensitivity of key quantities of interest, such as the evolving tumor volume, on model parameters. We demonstrate that when observational data are available, the method of active subspaces can be used to estimate parameter sensitivity. In parallel, we consider methods of output-variance-sensitivity as an alternative measure of parameter-sensitivity. Remarkably, for certain quantities of interest, such as tumor volume or mass, these two approaches yield very similar estimates. In the case in which the tumor volume is selected as the quantity of interest, the tumor proliferation parameter of the model was found to be, by far, the dominant factor compared to other parameters. 
	
	To determine the effects of various flow terms in the evolution of tumor shape and growth, we performed numerical experiments using finite-element approximations of the model for representative cases. These numerical results reveal that nonlinear flow regimes, expected to be relevant in certain types of tumors, can apparently affect the shape, connectivity, and distribution of tumors.
	
	\section*{Acknowledgments}
	
	The authors gratefully acknowledge the support of the German Science Foundation (DFG) for funding part of this work through grant WO 671/11-1, the Cancer Prevention Research Institute of Texas (CPRIT) under grant number RR160005, the NCI through the grants U01CA174706 and R01CA186193, and the US Department of Energy, Office of Science, Office of Advanced Scientific Computing Research, Mathematical Multifaceted Integrated Capability Centers (MMICCS) program, under award number  DE-SC0019393.

\end{document}